\newtheorem{theorem}{Theorem}[section]
\newtheorem{lemma}[theorem]{Lemma}
\newtheorem{proposition}[theorem]{Proposition}
\newtheorem{corollary}[theorem]{Corollary}
\newtheorem{conjecture}[theorem]{Conjecture}
\theoremstyle{definition}
\newtheorem{definition}[theorem]{Definition}
\newtheorem{problem}[theorem]{Problem}
\newtheorem{example}[theorem]{Example}
\newtheorem{remark}[theorem]{Remark}
\numberwithin{equation}{section}
\newcommand{\Dast}[1]{\Delta^{\ast}(#1)}
\newcommand{\mD}[1]{\min\Delta(#1)}
\newcommand{\vo}{\mathsf{v}}
\newcommand{\ro}{\mathsf{r}}
\newcommand{\Zo}{\mathsf{Z}}
\newcommand{\Lo}{\mathsf{L}}
\newcommand{\qo}{\mathsf{q}}
\newcommand{\Do}{\mathsf{D}}
\newcommand{\ac}{\mathcal{A}}
\newcommand{\bc}{\mathcal{B}}
\newcommand{\fc}{\mathcal{F}}
\newcommand{\lc}{\mathcal{L}}
\newcommand{\ic}{\mathcal{I}}
\newcommand{\dc}{\mathcal{D}}
\newcommand{\uc}{\mathcal{U}}
\newcommand{\s}{\sigma}
\newcommand{\N}{\mathbb{N}}
\newcommand{\Z}{\mathbb{Z}}
\newcommand{\red}{\textrm{red}}
\newcommand{\fin}{\textrm{fin}}
\DeclareMathOperator{\ord}{ord}
\DeclareMathOperator{\lcm}{lcm}
\begin{document}
\title[Results and problems on sets of lengths of Krull monoids]{Some recent results and open problems on sets of lengths of Krull monoids with finite class group}
\author{W.~A. Schmid}

\email{schmid@math.univ-paris13.fr}
\address{Universit\'e Paris 13, Sorbonne Paris Cit\'e, LAGA, CNRS, UMR 7539, Universit\'e Paris 8, F-93430, Villetaneuse, France}

\thanks{Supported by the ANR project Caesar, project number ANR-12-BS01-0011.}

\begin{abstract}
Some of the fundamental notions related to sets of lengths of Krull monoids with finite class group are discussed,  and a survey of recent results is  given. These include the elasticity and related notions, the set of distances, and the structure theorem for sets of lengths. Several open problems are mentioned. 
\end{abstract}

\maketitle

\section{Introduction}
\label{sec_int}

Krull monoids are a central structure in factorization theory.
On the one hand, many structures of interest such as maximal orders of algebraic number fields and more generally Dedekind domains  are Krull monoids; we give some more examples in Section \ref{sec_prel}.
On the other hand, Krull monoids are by definition the class of monoids  one gets by considering the monoids whose arithmetic is given   by direct restriction of the arithmetic of a `surrounding' factorial monoid. Thus, there is also a purely intrinsic reason why they are a very natural type of monoid in this context, and this might be part of the reason why they arise in various areas.  

The investigation of the lengths of factorizations, that is the number of irreducible factors in the factorizations, is a central subject in factorization theory.
One reason for considering lengths is that the length is a simple and natural parameter  of a factorization, while still containing interesting information. There are other, more technical reasons, that are explained later. 

The idea of this survey article is to give some insight into current research on sets of lengths of Krull monoids, with an emphasis on the case of finite class group and each class containing a prime divisor. By `current' we roughly mean obtained during the last decade, or put differently since the publication of Geroldinger and Halter-Koch's monograph \cite{geroldingerhalterkochBOOK}, which covered this subject in detail (see especially Chapters 6 and 7).  

The scope is quite narrow and even in this narrow scope we do not attempt to be complete. Rather, the aim is to convey via discussion of selected subjects some of the main trends in recent research on this subject and to highlight some problems that might be interesting avenues for future research. In this vein, some effort is made to explain the \emph{why} and not only the what. For the most part, this survey does not contain proofs of the results we mention. However, proofs of some basic constructions and lemmas are included, on the one hand since sometimes the details of these proofs are relevant for the discussion and on the other hand to convey the type of arguments used.   

No attempt is made to faithfully recount the history of the subject. Of course, we try to attribute correctly the main results we discuss, but we also often make reference to secondary sources or even give none at all when we give a proof; this is the case especially for some basic results and constructions that are very widely known and used, and that sometimes exist in numerous slightly different versions in the literature. Except for Proposition \ref{prop_3u}, none of the results in this survey is new.

\section{Preliminaries}
\label{sec_prel}

We denote by $\N$ the set of positive integers and by $\N_0$ the set of non-negative integers. Intervals are intervals of integers, that is for real numbers $a,b$ we have $[a,b]= \{z \in \Z \colon a \le z \le b \}$.

For subsets $A, B $ of the integers we denote by $A + B = \{a+b  \colon a \in A , \, b \in B\}$ the sum of the sets  $A$ and $B$. For $k$ an integer we denote by $k \cdot A = \{ka \colon a \in A\}$ the dilation of $A$  by $k$.

In general we follow the notation and conventions of \cite{geroldingerhalterkochBOOK} and \cite{geroldinger_lecturenotes} where more detailed information could be found; the former gives an in-depth treatment of  factorization theory as a whole, the latter gives an introduction to the aspects most relevant to this survey, that is factorizations in Krull monoids and the associated zero-sum problems.

\subsection{Monoids, factorizations, sets of lengths}

In this paper, a monoid is a commutative, cancelative semigroup with identity, 
which we usually simply denote by $1$. We typically use multiplicative notation for monoids.
The multiplicative semigroup of non-zero elements of an integral domain is a good example to keep in mind.
Let $(H, \cdot)$ be a monoid. We denote by $H^{\times}$ the set of invertible
elements of $H$; we call the monoid reduced if $1$ is the only invertible element. 
By  $\ac (H)$ we denote the set of irreducible elements of $H$, also called atoms, 
that is the elements $a \in H \setminus H^{\times} $ such that $a=bc$ implies that  $b$ or $c$ is invertible. 
Moreover, we recall that an element $a$ is called prime if $a \mid bc$ implies that $a \mid b $ or $a \mid c$. 
Every prime  is irreducible; the converse is not necessarily true.  

We denote by $H_{\red}= H/H^{\times}$ the reduced monoid associated to $H$.
We say that elements $a, b \in H$ are associated, in symbols $a\simeq b$, if $a = \epsilon b$ 
with an invertible element $\epsilon \in H^{\times}$.

A monoid $F$ is called  free abelian if there exists a subset $P$ (of prime elements)
 such that every $a \in F$ has a unique representation of the form
\[
a = \prod_{p \in P} p^{\vo_p(a), } \text{ where } \vo_p(a) \in \N_0 \ \text{ with }  \mathsf v_p(a) = 0  \text{
for all but finitely many }  p \in P.
\]
We  use the notation $\fc(P)$ to denote the free monoid with $P$ as set of prime elements. 
We call  $|a| = \sum_{p \in P} \vo_p (a)$ the length of $a$.

The  monoid  $\Zo (H) = \fc \bigl(
\ac(H_\red)\bigr)$  is called the   factorization
monoid of $H$, and  the monoid homomorphism
\[\pi \colon \Zo (H) \to H_{\red}\]
induced by $\pi (a) = a$ for each  $a \in \ac(H_\red)$
is  the  called factorization homomorphism  of $H$. 

For $a \in H$, 
\[
 \Zo(a)= \pi^{-1} (a H^{\times})
\]
is the  set of factorizations  of  $a$
and 
\[
\Lo (a) = \bigl\{ |z| \colon  z  \in \Zo(a) \bigr\} \subset \N_0
\]
is the  set of lengths of $a$.
The above definition of the set of factorizations of $a$ is a formalization of what one could describe informally as
 the set of distinct (up to ordering and associates) factorizations of $a$ into irreducibles.  

In the present survey, we essentially exclusively deal with lengths of factorizations, and thus we are mainly interested in $\Lo(a)$. An alternate description for $\Lo(a)$, for $a \in H \setminus H^{\times}$,  is that it is the set of all $l$ such that there exist $u_1,\dots, u_l \in \ac(H)$ with $a= u_1 \dots u_l$; and setting $\Lo(a)=\{0\}$ for $a \in H^{\times}$. 

Moreover, we set $\lc(H)= \{\Lo(a) \colon a \in H  \}$ the system of sets of lengths of $H$.

\subsection{Abelian groups and zero-sum sequences}

We denote abelian groups additively. Mainly we deal with finite abelian groups. 
Let $(G, +, 0)$ be an abelian group. Let  $G_0 \subset G$ be a subset. 
Then $[G_0] \subset G$ denotes the subsemigroup generated by $G_0$, and $\langle G_0 \rangle \subset G$ denotes the
subgroup generated by $G_0$.  
A family of non-zero elements  $(e_i)_{i \in I}$ of $G$ is said to be  independent if, for $m_i\in \Z$, 
\[
\sum_{i \in I} m_ie_i = 0  \text{ implies }  m_ie_i = 0 \text{ for all }  i \in I.
\]
The tuple $(e_i)_{i \in I}$ is called a basis if $(e_i)_{i \in I}$ 
is independent and the elements $e_i$ generate $G$ as a group. 

For $n \in \mathbb N$, let
$C_n$ denote a cyclic group with $n$ elements. Suppose $G$ is finite. 
For $|G| > 1$, there are uniquely determined integers $1 < n_1 \mid \ldots \mid n_r$ such that 
\[
G \cong C_{n_1} \oplus \ldots \oplus C_{n_r}.
\]
We denote by $\ro(G)= r$ the rank of $G$ and by $\exp(G)= n_r$ the exponent of $G$. 
If $|G| = 1$, then $\ro (G)= 0$ and $\exp(G)=1$.
A group is called a $p$-group if the exponent is a prime-power. 
 
We set $ \Do^{\ast} (G) =1 + \sum_{i=1}^r (n_i-1)$; the relevance of this number is explained at the end of this subsection.

For $(G,+)$ an abelian group, and $G_0 \subset G$, we consider $\fc(G_0)$. It is common to call an element $S \in \fc(G_0)$ a sequence over $G_0$, and to use some terminology derived from it. In particular, divisors of $S$ are often called subsequences of $S$ and the neutral element of $\fc(G_0)$ is sometimes called the empty sequence.  

By definition 
\[
S = \prod_{g \in G_0} g^{\vo_g(S)} \] 
where  $\vo_g(S) \in \N_0$  with  $ \vo_g(S) = 0 $ 
for all but finitely many  $ g \in G_0$, and this representation is unique. 
Moreover, $S= g_1  \dots g_{|S|}$ with $g_i \in G_0$ for each $i \in [1, |S|]$ that are uniquely determined up to ordering. 

Since the set $G_0$ is a subset of a group, it makes sense to  consider the sum of $S$, that is 
\[\s(S)=  \sum_{g \in G_0} \vo_g(S)g= \sum_{i=1}^{|S|} g_i.  \] 
The sequence $S$ is called a zero-sum sequence if $\s(S)= 0 \in G$. A zero-sum sequence is called a minimal zero-sum sequence if it is non-empty and each proper subsequence is not a zero-sum sequence. 

The set of all zero-sum sequences over $G_0$ is denoted by $\bc(G_0)$; it is a submonoid of $\fc(G_0)$. 
The irreducible elements of $\bc(G_0)$ are the minimal zero-sum sequences; for brevity we denote them by $\ac(G_0)$ rather than by $\ac(\bc (G_0))$.

The Davenport constant of $G_0$, denoted by $\Do(G_0)$, is defined as 
\[\sup \{|A| \colon A \in \ac(G_0) \}.\]
It can be shown in general that $\Do(G_0)$ is finite if $G_0$ is finite (see \cite[Theorem 3.4.2]{geroldingerhalterkochBOOK}); 
in the special case that  $G_0$ is a subset of a finite group, or more generally  contains only elements of finite order, it however follows just by noting that in a minimal zero-sum sequence no element can appear with a multiplicity larger than its order.

For $G$ a finite abelian group, one has $\Do(G) \ge \Do^{\ast} (G)$. 
Equality is known to hold for groups of rank at most two and for $p$-groups. However, for groups of rank at least four it is known that the inequality is strict for infinitely many groups. We refer to \cite[Chapter 5]{geroldingerhalterkochBOOK} and \cite{geroldinger_lecturenotes} for more information on the Davenport constant  in the context of  factorization theory.

\subsection{Krull monoids and transfer homomorphisms}
\label{transfer}

We recall some basic facts on Krull monoids. For a detailed discussion on Krull monoids we refer to the relevant chapters of  Halter-Koch's monograph \cite{halterkoch98book} or again \cite[Chapter 2]{geroldingerhalterkochBOOK}.

There are several equivalent ways to defined a Krull monoid; the one we use is well-suited for the current context.   A monoid $H$ is called a Krull monoid if it admits a divisor homomorphism into a free monoid. 
This means there is some free monoid $\fc(P)$ and a monoid homomorphism $\varphi: H \to \fc(P)$ such that $a \mid b$ if and only if $\varphi(a) \mid \varphi(b)$. Thus, the arithmetic of a Krull monoid is directly induced by the one of a free, and thus factorial, monoid. 

There is an essentially unique `minimal' free monoid with this property, which is characterized by the property that for each $p \in P$ there exist $a_1, \dots, a_k \in H$ such that  
$p = \gcd(\varphi(a_1), \dots, \varphi(a_k))$. 

One calls a  divisor homomorphism $\varphi: H \to \fc(P)$ with the additional property, 
for each $p \in P$ there exist $a_1, \dots, a_k \in H$ such that  
$p = \gcd(\varphi(a_1), \dots, \varphi(a_k))$, a divisor theory. The elements of $P$ are called  prime divisors. 

Every Krull monoid admits a divisor theory, which is unique up to isomorphism. 
More specifically, a divisor theory is given by the  map from  $H$ to $ \ic_v(H)$, the monoid of divisorial ideals, mapping each element to the principal ideal it generates. This is indeed a free monoid in the case of Krull monoids as every divisorial ideal is in an essentially unique way, the product (in the sense of divisorial ideals) of divisorial prime ideals.  

Another characterization for Krull monoids is that they are completely integrally closed and $v$-noetherian, that is they satisfy the ascending chain condition on divisorial ideals. 

For  $\varphi: H \to \fc(P)$ a divisor theory, the group $G=\qo(\fc(P))/ \qo(\varphi(H))$ is called the class group of $H$. We denote the class containing some element $f$ by $[f]$; moreover, we use additive notation for the class group.  The set $G_P= \{[p] \colon p \in  P \} \subset G$ is called the set of classes containing prime divisors. The set $G_P$ generates $G$ as a semi-group; any  generating subset of $G$ can arise in this way.

Let $\tilde{\beta}: \fc(P) \to \fc(G_P)$ be the surjective monoid homomorphism induced  by $p \mapsto [p]$ for $p \in P$.  

One can see that the image of $\tilde{\beta} \circ \varphi $ is $\bc(G_P)$, and  
$\beta =  \tilde{\beta} \circ \varphi: H \to \bc(G_P)$ is called the block homomorphism. 

The block homomorphism is the archetypal example of a transfer homomorphism.
A monoid homomorphism $\theta: H \to B$ is called a transfer homomorphism if it has the following properties: 
\begin{itemize}
\item $B = \theta(H)B^{\times}$ and $\theta^{-1}(B^{\times}) = H^{\times}$. 
\item If $u \in H$, $b,c \in B$ and $\theta (u)= bc$, then there exist $v,w \in H$ such that $u= vw$, $\theta(v) \simeq b $ and $\theta(w) \simeq c $. 
\end{itemize}

An important property of transfer homomorphism is that $\Lo(a) = \Lo(\theta(a))$ for each $a \in H$, and $\lc(H)= \lc(B)$. Thus, a transfer homomorphism allows to transfer questions on sets of lengths from a monoid of interest $H$ to a simpler auxiliary  monoid $B$. The notion transfer homomorphism  was introduced by Halter-Koch \cite{halterkoch97};  an early formalization of the block homomorphism, in the context of rings of algebraic integers, was given by Narkiewicz \cite{narkiewicz79}.

\subsection{Examples of Krull monoids and related structures}

We gather some of the main examples of structures of interest to which the results recalled in this survey apply, 
that is structures that are Krull monoids or structures that admit a transfer homomorphism to a Krull monoid, which then usually is a monoid of zero-sum sequences. 

Before we start, we recall that a domain is a Krull domain if and only if its multiplicative monoid is a Krull monoid, as shown by Krause \cite{krause89}. Thus, we include Krull domains in our list of Krull monoids without further elaboration of this point. Moreover, we recall that Dedekind domains and more generally integrally closed noetherian domains are Krull domains (see, e.g., \cite[Section 2.11]{geroldingerhalterkochBOOK}). 

The following structures are Krull monoids. 
\begin{itemize}
\item Rings of integers in  algebraic number fields and more generally holomorphy rings in global fields (see, e.g., \cite{geroldingerhalterkochBOOK}, in particular Sections 2.11 and 8.9). 
\item Regular congruence monoids in Dedekind domains, for example the domains mentioned above (see, e.g., \cite{geroldingerhalterkoch04cong} or \cite[Section 2.11]{geroldingerhalterkochBOOK}).  
\item Rings of polynomial invariants of finite groups (see, e.g., \cite[Theorem 4.1]{cziszteretal}.
\item Diophantine monoids (see, e.g., \cite{chapmankrauseoek02}).
\end{itemize}

Moreover, the monoid of zero-sum sequences over a subset $G_0$ of an abelian group is itself a Krull monoid; the embedding $\bc(G_0) \hookrightarrow \fc(G_0)$ is a divisor homomorphism. 

Moreover, semi-groups of isomorphy classes of certain modules (the operation being the direct sum) turn out to be Krull monoids in various cases. There are many contributions to this subject; we refer to the recent monograph of Leuschke and Wiegand \cite{leuschkewiegand} for an overview.  We mention, specifically, a recent result by Baeth and Geroldinger \cite[Theorem 5.5]{baethgeroldinger14}, yielding a Krull monoid with cyclic classgroup such that each class contains a prime divisor (earlier example often had infinite class groups).

In addition to those examples of Krull monoids, there are structures that while not Krull monoids themselves, for example as they are not commutative or not integrally closed, still admit a transfer homomorphism to a Krull monoid. Hence their system of sets of lengths is that of a Krull monoid. 

We recall two recent results; the first is due to Smertnig \cite[Theorem 1.1]{smertnig13}, the second due to Geroldinger, Kainrath, and Reinhart \cite[Theorem 5.8]{geroldingerkr} (their actual result is more general).  

\begin{itemize}
\item 
Let $\mathcal{O}$ be a holomorphy ring in a global field  and let $A$ be a central simple algebra over this field. For $H$ a classical maximal $\mathcal{O}$-order of $A$ one has that if  every stably free left $H$-ideal is free, then there is a transfer-homomorphism from $H\setminus \{0\}$ to the monoid of zero-sum sequence over  a ray class group of $\mathcal{O}$, which is a finite abelian group.

\item Let $H$ be a seminormal order in a holomorphy ring of a global field with principal order $\widehat{H}$ such that the natural map $\mathfrak{X} (\widehat{H}) \to \mathfrak{X} (H)$ is bijective and there is an isomorphism  between the $v$-class groups of $H$ and $\widehat{H}$. Then there is a transfer-homomorphism from $H\setminus \{0\}$ to the monoid of zero-sum sequence over this $v$-class group, which is a finite abelian group.
\end{itemize}

In general we formulate the results we recall for Krull monoids. However, in cases where it seems to cause too much notational inconvenience, we give them for monoids of zero-sum sequences only. 

\section{Some general results}

In this section we collect some general results, before we focus on the more specific context of Krull monoids with finite class group in the subsequent sections.

\begin{definition}
Let $H$ be a monoid. 
\begin{enumerate}
\item $H$ is called atomic if $|\Zo(a)|>0$ for each $a\in H$. 
\item $H$ is called factorial if $|\Zo(a)|=1$ for each $a\in H$.
\item $H$ is called half-factorial if $|\Lo(a)| = 1 $ for each $a \in H$. 
\item $H$ is called an FF-monoid if $1 \le |\Zo(a)|< \infty $ for each $a \in H$. 
\item $H$ is called a BF-monoid if $1 \le |\Lo(a)|< \infty $ for each $a \in H$. 
\end{enumerate}
\end{definition}

The definition directly implies that all these monoids are atomic; a factorial monoid is half-factorial; an FF-monoid is a BF-monoid. 
It is not hard to see that  a Krull monoid is an FF-monoid, and thus a BF-monoid.  

Sets of lengths are subsets of the non-negative integers. However, sets of lengths containing $0$ or $1$ are very special. We make this precise in the following remark. 

\begin{remark}
Let $H$ be a monoid and let $a \in H$.
\begin{enumerate}
\item If $0  \in \Lo(a)$, then $\Lo(a)= \{0\}$ and $a \in H^{\times}$. 
\item If $1  \in \Lo(a)$, then $\Lo(a)= \{1\}$ and $a \in \ac(H)$. 
\end{enumerate}
\end{remark}

If $H$ is half-factorial, then $\lc(H) = \{\{ n \} \colon n \in \N_0 \}$. 
Going beyond half-factorial monoids, one might have the idea to relax the condition only slightly, 
say by imposing that each element has factorizations of at most two distinct lengths. 
However, this idea is infeasible, as the following lemma illustrates.

\begin{lemma}
\label{lem_add}
Let $H$ be an atomic monoid and let $a, b \in H$.  
Then $\Lo(a) + \Lo(b) \subset \Lo(ab)$. 
In particular, if $|\Lo(a)| > 1$, then $|\Lo(a^n)| > n$ for each $n \in \N$. 
\end{lemma}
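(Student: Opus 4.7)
The plan is to prove the inclusion $\Lo(a) + \Lo(b) \subset \Lo(ab)$ by a direct concatenation argument, and then derive the consequence by iterating and counting distinct sums.

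For the first assertion, I would fix $m \in \Lo(a)$ and $n \in \Lo(b)$ and aim to produce a factorization of $ab$ of length $m+n$. If both $a$ and $b$ lie outside $H^{\times}$, the description of $\Lo(\cdot)$ recalled in the paper gives $a = u_1 \cdots u_m$ and $b = v_1 \cdots v_n$ with all $u_i, v_j \in \ac(H)$. Then $ab = u_1 \cdots u_m v_1 \cdots v_n$ exhibits $m+n \in \Lo(ab)$. The degenerate case needs a separate brief remark: if, say, $a \in H^{\times}$, then by the earlier remark $\Lo(a) = \{0\}$, and multiplying an atomic factorization of $b$ by a unit yields an atomic factorization of $ab$ of the same length (the unit can be absorbed into one of the atoms, which remains an atom up to associates). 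Hence $\Lo(a) + \Lo(b) = \Lo(b) \subset \Lo(ab)$ in this case as well.

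For the second assertion, I would iterate the first part to obtain
\[
\underbrace{\Lo(a) + \cdots + \Lo(a)}_{n \text{ copies}} \subset \Lo(a^n).
\]
Since $|\Lo(a)| > 1$, we may pick $k_1, k_2 \in \Lo(a)$ with $k_1 < k_2$. For each $i \in [0, n]$, choosing $i$ summands equal to $k_2$ and $n-i$ summands equal to $k_1$ produces the value $(n-i)k_1 + i k_2 = n k_1 + i(k_2 - k_1)$ in the $n$-fold sumset. As $i$ ranges over $\{0, 1, \dots, n\}$ these $n+1$ values are pairwise distinct (because $k_2 - k_1 > 0$), hence $|\Lo(a^n)| \ge n+1 > n$.

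There is no real obstacle here; the only minor subtlety is handling the unit case when translating the formal definition $\Lo(a) = \{|z|\colon z \in \Zo(a)\}$ back into the ``$a = u_1 \cdots u_l$'' description, which is why I would separate the argument into the two cases above.
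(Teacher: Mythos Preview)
Your proof is correct and follows essentially the same approach as the paper: the concatenation argument for the inclusion is identical, and your explicit construction of the $n+1$ values $nk_1 + i(k_2-k_1)$ is a concrete instance of the paper's inductive use of the sumset bound $|A+B| \ge |A|+|B|-1$. Your extra care with the unit case is a reasonable addition that the paper leaves implicit.
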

\begin{proof}
Let $k \in \Lo(a)$ and $l \in \Lo(b)$. 
Let  $a = u_1 \dots u_k$ and $b = v_1 \dots v_l$ with irreducible $u_i, v_j \in \ac( H)$ for each $i \in [1,k]$ and $j \in [1,l]$. 
Then $ab = u_1 \dots u_k v_1 \dots v_l $ is a factorization of $ab$ of length $k+l$, 
and thus $k+l \in \Lo(ab)$.
The `in particular'-statement follows by an easy inductive argument, using the fact that 
for  $A,B \subset \Z$ of cardinality at least $2$, one has $|A+B|> |A|$ (in fact even $|A+B|\ge  |A| + |B|-1$). 
\end{proof}

We end this section by discussing some `extremal' cases for Krull monoids. 
The first result, in the context of rings of algebraic integers, goes back to Carlitz \cite{carlitz60}; for a proof in the context of monoids of zero-sum sequences, which suffices by the transfer result recalled in Section \ref{transfer}  see \cite[Theorem 3.4.11.5]{geroldingerhalterkochBOOK} or \cite[Proposition 1.2.4]{geroldinger_lecturenotes}.   

\begin{theorem}
\label{thm_carlitz}
Let $H$ be a Krull monoid such that each class contains a prime divisor. Then, $H$ is half-factorial if an only if its class group has order at most $2$. 
\end{theorem}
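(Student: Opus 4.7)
The plan is to use the transfer homomorphism from Section \ref{transfer} to reduce the problem to the monoid $\bc(G)$ of zero-sum sequences over $G$. Since each class contains a prime divisor, $G_P = G$, and the block homomorphism $\beta: H \to \bc(G)$ is a transfer homomorphism, so $\lc(H) = \lc(\bc(G))$. Hence $H$ is half-factorial if and only if $\bc(G)$ is.

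For the easy direction, suppose $|G| \le 2$. If $|G|=1$ the only atom is the zero element $0 \in G$, and every zero-sum sequence $0^n$ has a single factorization. If $|G|=2$, write $G=\{0,g\}$; the atoms are $0$ and $g^2$, and every zero-sum sequence is of the form $0^a g^{2b}$, which factors uniquely as $0^a (g^2)^b$. In both cases $\bc(G)$ is even factorial.

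For the converse, assume $|G| \ge 3$; I plan to exhibit an element with two distinct factorization lengths, splitting into two cases depending on the structure of $G$. If $G$ contains an element $g$ of order $n \ge 3$, then $g^n$ and $(-g)^n$ are both atoms, while $g\cdot(-g)$ is also an atom. The sequence $W = g^n (-g)^n$ then admits the two factorizations
\[
W = g^n \cdot (-g)^n \qquad \text{and} \qquad W = \bigl(g\cdot(-g)\bigr)^n,
\]
giving $\{2,n\} \subset \Lo(W)$, so $|\Lo(W)|\ge 2$. If on the other hand every non-zero element of $G$ has order $2$, then $G\cong C_2^r$ with $r \ge 2$ since $|G|\ge 3$; pick independent elements $e_1,e_2$. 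Then $e_1^2$, $e_2^2$, $(e_1+e_2)^2$, and $A = e_1 e_2 (e_1+e_2)$ are all atoms, and
\[
A^2 = e_1^2 \cdot e_2^2 \cdot (e_1+e_2)^2 = A \cdot A,
\]
yielding $\{2,3\} \subset \Lo(A^2)$.

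The main obstacle is recognizing that the naive construction $g^n (-g)^n$ does not give two lengths when $\mathrm{ord}(g) = 2$, which forces the separate treatment of the $2$-elementary case; once that is handled with the three-atom construction above, the proof is complete.
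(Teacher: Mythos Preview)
Your proof is correct and follows essentially the approach the paper indicates: reduce via the block homomorphism of Section~\ref{transfer} to $\bc(G)$, then use exactly the constructions that the paper records in Lemma~\ref{lem_23} (the element $g^n(-g)^n$ for $\ord g \ge 3$, and $D = e_1^2 e_2^2 (e_1+e_2)^2$ in the elementary $2$-group case). The paper itself does not spell out the proof but points to the transfer result and the cited sources, so your argument is precisely the intended one.
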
 

The subsequent result is due to Kainrath \cite{kainrath99}.

\begin{theorem}
\label{thm_kainrath}
Let $H$ be a Krull monoid with infinite class group such that each class contains a prime divisor. Then, every finite subset of $\N_{\ge 2}$ is a set of lengths.  
\end{theorem}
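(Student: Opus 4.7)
The plan is to first apply the transfer-homomorphism machinery of Section \ref{transfer}: the block homomorphism $\beta\colon H\to\bc(G_P)$ yields $\lc(H)=\lc(\bc(G_P))$, and the hypothesis that every class contains a prime divisor gives $G_P=G$. Hence it suffices to prove that for every finite $L\subset\N_{\ge 2}$ there exists $B\in\bc(G)$ with $\Lo(B)=L$.

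A useful warm-up is the case $|L|=2$. For any $l\ge 3$, choose independent elements $g_1,\dots,g_{l-1}$ of $G$ of sufficiently large order, set $g_0=-(g_1+\cdots+g_{l-1})$, and consider the zero-sum sequence $B_0=\prod_{i=0}^{l-1} g_i(-g_i)$. The independence of the $g_i$ forces the only minimal zero-sum sequences dividing $B_0$ to be the pairs $g_i(-g_i)$ for $i\in[0,l-1]$ together with the two long atoms $g_0 g_1\cdots g_{l-1}$ and $(-g_0)(-g_1)\cdots(-g_{l-1})$. Hence $\Lo(B_0)=\{2,l\}$. Multiplying $B_0$ by disjointly supported independent atoms of the form $e_j(-e_j)$ shifts $\Lo$ by a constant, so every two-element set $\{a,b\}\subset\N_{\ge 2}$ can be realized.

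For general $L=\{l_1<\cdots<l_k\}$ one cannot simply multiply such gadgets, since $\Lo(B_1 B_2)=\Lo(B_1)+\Lo(B_2)$ is a sumset and not a union. The strategy I would pursue is to interleave $k$ gadgets into a single $B$ so that every atom decomposition of $B$ is forced to make exactly one global choice among $k$ alternatives, with the $i$-th alternative yielding a factorization of length exactly $l_i$. The infinitude of $G$ enters throughout the construction: it supplies as many independent generic elements as needed to make the interactions between gadgets rigid and to prevent any unintended cross-cancellation.

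The inclusion $L\subseteq\Lo(B)$ is the easy direction, since the $k$ prescribed choices give $k$ explicit factorizations of the right lengths. The reverse inclusion $\Lo(B)\subseteq L$ is the main obstacle: one must classify \emph{all} minimal zero-sum sequences dividing $B$ and show that every atom decomposition of $B$ arises from one of the intended combinatorial choices. Here the genericity of the chosen group elements reduces the classification to a finite combinatorial problem, but ruling out unforeseen hybrid atoms that mix elements from different gadgets demands a delicate case analysis, which is the technical heart of Kainrath's argument.
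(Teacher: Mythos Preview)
The paper does not supply its own proof of this theorem; it simply attributes the result to Kainrath \cite{kainrath99} and states it. So there is nothing in the paper to compare your argument against, and what you have written is not a self-contained proof either: for the general case you explicitly defer to ``the technical heart of Kainrath's argument'' rather than carry it out. As it stands, your proposal is an outline of intentions, not a proof.

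There is also a concrete gap already in the warm-up. You write ``choose independent elements $g_1,\dots,g_{l-1}$ of $G$ of sufficiently large order.'' An infinite abelian group need not contain any family of more than one independent element: take $G=\Z$ (or $G=\Q$). In such groups your construction of $B_0$ with $\Lo(B_0)=\{2,l\}$ does not get off the ground. Kainrath's actual proof must (and does) split into cases according to the structure of $G$ --- roughly, whether $G$ has an element of infinite order, elements of unbounded finite order, or an infinite independent set of bounded order --- and the constructions differ substantially across these cases. Your sketch tacitly assumes the last scenario. Even granting that, the step from two-element $L$ to arbitrary finite $L$ is not merely a matter of ``interleaving gadgets'': producing a $B$ for which one can \emph{prove} that no hybrid atoms exist and that every factorization realizes one of the $k$ intended lengths is precisely where all the work lies, and you have not indicated how to do it.
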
 

Thus for $H$ a Krull monoid with class group of order at most $2$, we have $\lc(H)= \{ \{ n\} \colon n \in \N_0\}$; for $H$  a Krull monoid with infinite class group such that each class contains a prime divisor we have $\lc (H ) =  \{\{0\}, \{1\}\}\cup \mathbb{P}_{\fin} (\N_{\ge 2})$, where $\mathbb{P}_{\fin}(\N_{\ge 2})$ denotes the set of all finite subsets of $\N_{\ge 2}$. 

For this reason we often restrict to considering the case of finite class groups of order at least $3$.

\section{Small sets}

As discussed, an atomic monoid that is not half-factorial always has arbitrarily large sets in its system of sets of lengths. 
One approach to understand the system of sets of lengths is to focus on `small' sets, that is those sets that  arise from factoring elements that are a product of only few irreducibles (their sets of lengths thus contain some small number). 

As an irreducible element $u$ has a unique factorization and $\Lo(u)=  \{1\}$, the next simplest case is to consider the product of two irreducibles. Studying the factorizations of $uv$, for $u,v \in \ac(H)$, turns out to yield interesting problems.

One natural question to ask is what other lengths can there be besides $2$ in a set of lengths. 
We start by recalling two basic constructions.  

\begin{lemma}
\label{lem_23}
Let $G$ be a finite abelian group of order at least $3$. 
\begin{enumerate}
\item Then $\{2, 3\} \in \lc(G)$.
\item If $g \in G$ is an element of order $n\ge 3$, then $\{ 2 , n \} \in \lc(G)$. 
\end{enumerate}
\end{lemma}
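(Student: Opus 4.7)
Both statements concern $\lc(G) = \lc(\bc(G))$, so my plan is to exhibit, for each claim, one explicit zero-sum sequence realising the prescribed set of lengths; by the block homomorphism of Section~\ref{transfer}, this is enough to place the set inside $\lc(H)$ for any Krull monoid $H$ with class group $G$.

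For (2), the natural candidate is $B = g^n(-g)^n$. Because $n\ge 3$ the elements $g$ and $-g$ are distinct, so $B$ lives in the submonoid $\bc(\{g,-g\})$. I would first classify the atoms there: a minimal zero-sum sequence $g^a(-g)^b$ forces $n\mid a-b$, and the case $a=b\ge 2$ is killed by the proper zero-sum subsequence $g(-g)$, so the only atoms are $g(-g)$, $g^n$, and $(-g)^n$. Every factorisation of $B$ then has shape $(g(-g))^k(g^n)^\ell((-g)^n)^m$ with $k+n\ell = k+nm = n$, and this linear system forces $(k,\ell,m)\in\{(n,0,0),(0,1,1)\}$. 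The corresponding factorisation lengths are $n$ and $2$, yielding $\Lo(B)=\{2,n\}$.

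For (1), I would pick three nonzero elements summing to $0$. Take $g_1\in G\setminus\{0\}$ and, using $|G|\ge 3$, any $g_2\in G\setminus\{0,-g_1\}$; set $g_3=-g_1-g_2\neq 0$. Consider
$$B \;=\; g_1 g_2 g_3 \cdot (-g_1)(-g_2)(-g_3) \;\in\; \bc(G),$$
a sequence of length $6$. It admits two obvious factorisations: $(g_1 g_2 g_3)\bigl((-g_1)(-g_2)(-g_3)\bigr)$ of length~$2$, and $\bigl(g_1(-g_1)\bigr)\bigl(g_2(-g_2)\bigr)\bigl(g_3(-g_3)\bigr)$ of length~$3$. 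The triple-blocks are atoms because any proper subsequence has sum one of $g_i$, or $g_i+g_j=-g_k$, none of which is zero; the pair-blocks are obviously atoms. Hence $\{2,3\}\subseteq\Lo(B)$.

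For the reverse inclusion, $0\notin\supp(B)$ so every atom dividing $B$ has length $\ge 2$; thus every factorisation uses at most $|B|/2=3$ atoms. Moreover $B$ itself is not an atom, since $g_1(-g_1)$ is a proper zero-sum subsequence, so every factorisation has length $\ge 2$. This gives $\Lo(B)\subseteq\{2,3\}$, and equality follows. The one delicate point I expect is the handling of coincidences among the six listed entries of $B$: the constraint $g_1+g_2\neq 0$ is precisely what forbids $g_i=-g_j$ for $i\neq j$, which would collapse the three pair-blocks and spoil the length-$3$ factorisation. Harmless coincidences such as $g_1=g_2$, or $g_i=-g_i$ when $2g_i=0$, merely change the multiplicities in $B$ and do not affect any of the arguments above.
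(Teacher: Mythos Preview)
Your proof is correct. For part (2) you do exactly what the paper does, only supplying more detail on why $\Lo(g^n(-g)^n)=\{2,n\}$. For part (1) you take a genuinely different route: the paper splits into two cases --- an element of order $\ge 3$ exists, or $G$ is an elementary $2$-group --- and exhibits two separate sequences, $g^2(-2g)(-g)^2(2g)$ and $e_1^2e_2^2(e_1+e_2)^2$ respectively, whereas you give a single unified construction $B=g_1g_2g_3(-g_1)(-g_2)(-g_3)$ with $g_1+g_2+g_3=0$ and all $g_i\neq 0$. Your construction in fact specialises to both of the paper's: choosing $g_1=g_2=g$ (admissible when $\ord g\ge 3$) recovers the first sequence, and choosing $g_1=e_1$, $g_2=e_2$ recovers the second in the elementary $2$-group case. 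The gain of your approach is uniformity and a clean upper bound $\Lo(B)\subset[2,3]$ from $|B|=6$; the paper's case split makes the examples slightly more concrete and sidesteps the discussion of coincidences among the $g_i$ that you (correctly) flag as harmless.
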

\begin{proof}
Let $g \in G$ be of order $n \ge 3$. 
Setting  $B = g^2(-2g) \cdot (-g)^2(2g)$ and noting $B= ((-g)g)^2 \cdot (-2g)2g$, it follows that $\Lo(B) = \{2,3\}$.
Note that $2g = - g$ holds for $n=3$, but this does not affect the argument. 
Moreover, setting $C=g^n (-g)^n$ and noting $C= ((-g)g)^n$  we see $\Lo(C)= \{2,n\}$.
 
It remains to show the first part in case there is no element of order at least $3$. 
If this is the case, there exist independent elements $(e_1,e_2)$ each of order $2$. 
We set $D= e_1^2 e_2^2 (e_1+e_2)^2$ and noting $D = (e_1e_2(e_1+e_2))^2$, it follows that $\Lo(D) = \{2,3\}$. 
\end{proof}

We note that  in some sense the simplest non-singleton set that can be a set of length, namely  $\{2,3\}$, is always in $\lc(G)$ for $|G| \ge 3$, but there is no absolute bound (that is one independent of $G$) on the size of elements in a set of lengths containing $2$. 
One natural question is to study this maximum size, for a given monoid $H$. 
Formally, one investigates $\max \{ \max \Lo (uv) \colon u,v \in \ac(H) \}$ or written differently $\max \left( \bigcup_{2 \in L, \, L \in \lc(H)} L \right)$. 
  
Similarly, one can consider the product of $3$ or more irreducibles. More generally, one considers the following quantities. 

\begin{definition}
Let $H$ be an atomic monoid. For $M \subset  \N_0$ let 
\[
\uc_M (H)= \bigcup_{M \subset L ,\, L \in \lc(H)}L.
\] 
Moreover, let $\lambda_M(H)= \min \uc_M (H)$ and $\rho_M(H)= \sup\uc_M (H)$.
\end{definition}

The case where $M$ is a singleton is of particular interest. 
For $k \in \N_0$, we write $\uc_{k}(H)$, $\lambda_{k}(H)$ and $\rho_{k}(H)$ for $\uc_{\{k\}}(H)$, $\lambda_{\{k\}}(H)$ and $\rho_{\{k\}}(H)$. These constants, especially $\rho_k(H)$ are those that received most interest so far.
The sets $\uc_k(H)$ were introduced by Chapman and Smith \cite{chapmansmith90a}, and the generalization $\uc_M(H)$ appeared in \cite{baginskietal13}.    

Moreover, the quantity $\rho(H) = \sup_{k \in \N} \rho_k(H)/k$ is called elasticity of the monoid, and it is also a classical constant in  factorization theory. The more common way to define it is as $\sup_{a \in H \setminus H^{\times}} \left(\sup \Lo(a)/\min \Lo(a) \right )$. We refer to \cite{anderson97} for an overview of classical results. 

We saw that $\uc_{0}(H)= \{0\}$ and $\uc_{1}(H) = \{1\}$. 
For $H$ a Krull monoid  with finite class group  $G$ such that each class contains a prime divisor, it is not difficult to determine $\rho_{\{k\}}(H)$ for even $k$; it is however a challenging problem for odd $k$. We show the former as part of the following well-known lemma, which we prove to give a general idea of the type of argument.  

\begin{lemma}
\label{lem_rho2}
Let $H$ be a non-factorial Krull monoid with   set of classes containing prime divisors $G_P$  such that the Davenport constant $\Do(G_P)$ is finite. 
\begin{enumerate}
\item $\rho_k(H) \le k \Do(G_P)/2$ for all $k \in \N$. 
\item If $G_P = -G_P$, then $\rho_{k+2}(H) \ge \rho_{k}(H) + \Do(G_P)$. In particular,  
\[
\rho_{k}(H) \ge 
\begin{cases} 
& \frac{k}{2} \Do(G_P) \quad k  \text{  even}  \\
& \frac{k-1}{2} \Do(G_P) +1 \quad k  \text{  odd}  
\end{cases}
\]
and $\rho_{2l}(H)= l \Do(G_P)$ for every $l \in \N_0$. 
\end{enumerate}
\end{lemma}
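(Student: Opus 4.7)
The plan is to pass through the block homomorphism recalled in Section~\ref{transfer}, which lets us replace $H$ by the monoid of zero-sum sequences $\bc(G_P)$ without changing $\lc(H)$, and hence without changing $\rho_k$. All arguments below therefore take place in $\bc(G_P)$, where the length $|A|$ of an atom is literally the number of group elements it contains.

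For part~(1), I fix $B \in \bc(G_P)$ with $k \in \Lo(B)$ and set $l = \max \Lo(B)$. A minimal zero-sum sequence of length $1$ must be the one-element sequence $0$, so if $0 \in G_P$ I first write $B = 0^m B'$ with $\vo_0(B') = 0$; since $0$ is a prime of $\bc(G_P)$ one has $\Lo(B) = m + \Lo(B')$, and it suffices to treat $B'$. Thus I may assume $\vo_0(B) = 0$, so every atom occurring in any factorization of $B$ has length at least $2$. A factorization $B = A_1 \cdots A_k$ then gives $|B| \le k\,\Do(G_P)$ by the definition of $\Do(G_P)$, while a factorization $B = U_1 \cdots U_l$ gives $|B| \ge 2l$, and combining yields $l \le k\,\Do(G_P)/2$.

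For part~(2), I fix an atom $A = g_1 \cdots g_D \in \ac(G_P)$ of maximal length $D = \Do(G_P)$; under the hypothesis $G_P = -G_P$ the sequence $-A = (-g_1) \cdots (-g_D)$ is again in $\ac(G_P)$. The product $C = A \cdot (-A)$ then factors both as $A \cdot (-A)$ (length $2$) and as $\prod_{i=1}^{D} (g_i \cdot (-g_i))$ (length $D$), so $\{2, D\} \subset \Lo(C)$. Given any $B$ with $k \in \Lo(B)$, Lemma~\ref{lem_add} yields $\Lo(B) + \Lo(C) \subset \Lo(BC)$, so $k+2 \in \Lo(BC)$ while $\max \Lo(BC) \ge \max \Lo(B) + D$. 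Taking the supremum over such $B$ gives $\rho_{k+2}(H) \ge \rho_k(H) + \Do(G_P)$. The explicit even and odd bounds then follow by induction from the base cases $\rho_0(H) = 0$ and $\rho_1(H) = 1$, recorded before the lemma, and $\rho_{2l}(H) = l\,\Do(G_P)$ is forced by the matching bounds from parts~(1) and~(2).

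The only step requiring genuine care is the bookkeeping around a possible prime divisor in the class $0 \in G_P$; factoring these off at the outset reduces everything to the clean inequality $2l \le |B| \le k\,\Do(G_P)$, after which both halves of the statement are essentially a length count.
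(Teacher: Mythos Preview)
Your argument is correct and matches the paper's proof essentially line for line: transfer to $\bc(G_P)$, strip off the prime $0$, compare lengths via $2l \le |B| \le k\,\Do(G_P)$ for part~(1), and for part~(2) build $C = A(-A)$ with $\{2,\Do(G_P)\} \subset \Lo(C)$ and iterate from $\rho_0 = 0$, $\rho_1 = 1$. The only point you leave implicit is that the reduction in part~(1) requires $\Do(G_P) \ge 2$ (guaranteed by non-factoriality) to pass from $\max\Lo(B') \le (k-m)\,\Do(G_P)/2$ back to $\max\Lo(B) = m + \max\Lo(B') \le k\,\Do(G_P)/2$; the paper makes this step explicit.
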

\begin{proof}
By the transfer results recalled in Section \ref{transfer} we can consider the problem in $\bc(G_P)$. We note that $\Do(G_P) \ge 2$ as the monoid is not factorial. 

1. Let $B \in \bc(G_P)$ with $k \in \Lo(B)$, say $B= U_1 \dots U_k $ with $U_i \in \ac(G_P)$ for each $i \in [1,k]$. 
Let $B=V_1 \dots V_r$ with $V_j \in \ac(G_P)$ for each $j \in [1,r]$. 

First, suppose $0 \nmid B$. Then $|V_j|\ge 2$ for all $j\in [1,r]$, while
$|U_i | \le \Do(G_P)$ for all $i\in [1,k]$, whence $2r \le |B| \le k \Do(G_P) $. 
Thus $r \le k \Do(G_P)/2$. This shows that every element of $\Lo(B)$ is bounded 
above by $k \Do(G_P)/2$, showing the claim. 

Now, let $B= 0^v B'$ where $v \in \N$ and $0 \nmid B'$. Then $\Lo(B) = v + \Lo(B')$ and $k-v \in \Lo(B')$. 
Thus, $ \max \Lo(B') \le (k-v) \Do(G_P)/2$ and $\max \Lo(B) \le  v +  (k-v) \Do(G_P)/2 \le k \Do(G_P)/2$.  

2.  Let $U= g_1 \dots g_l \in \ac(G_P)$.
Then $-U \in \ac(G_P)$. We have $(-U)U = \prod_{i=1}^{l}(-g_i)g_i$ and $(-g_i)g_i \in \ac(G_P)$ for all $i \in [1,l]$, it follows that $l \in \Lo((-U)U)$ and $l \le \rho_2(H)$. Let us now assume $U$ has length $|U|= \Do(G_P)$; such a $U$ exists by definition of $\Do (G_P)$. 

Let $B\in \bc(G_P)$ with $\{k, \rho_k (H)\}  \subset  \Lo(B)$. 
Then, as $\Lo((-U)U) + \Lo(B) \subset \Lo((-U)UB)$, we have
$\{k+2, \rho_k (H) + \Do(G_P)\}  \subset  \Lo((-U)UB)$ and the claim follows. 

To get the `in particular'-claim it suffices to apply this bound repeatedly, starting from $\rho_0(H)=0$ and $\rho_1(H)=1$. 
\end{proof}

We focus on the case that every class contains a prime divisor.
Since  $G_P = - G_P$ is trivially true, in this case  $\rho_{k}(H)$ is determined for even $k$, and we now recall some results for the case that $k$ is odd.  

From the preceding lemma one has the inequality 
\begin{equation}
\label{ineq_rhok}
k \Do(G) + 1  \le  \rho_{2k+1}(G) \le  k \Do(G)+  \left \lfloor  \frac{\Do(G)}{2}\right \rfloor.
\end{equation}

By a result of Gao and Geroldinger \cite{gaogeroldinger09}  it is known that for cyclic groups equality always holds at the lower bound. 

\begin{theorem}
Let $H$ be Krull monoid with finite cyclic class group $G$ of order at least $3$ such that each class contains a prime divisor. Then $\rho_{2k+1}(H) = k |G| + 1$ for all $k \in \N_0$.
\end{theorem}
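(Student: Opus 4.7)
The plan is to pass, via the transfer homomorphism recalled in Section~\ref{transfer}, from $H$ to $\bc(G)$ with $G=C_n$ cyclic of order $n\ge 3$; since $\Do(C_n)=n$, inequality~\eqref{ineq_rhok} already yields the lower bound $\rho_{2k+1}(H)\ge kn+1$, and an explicit witness is $B=0\cdot g^{kn}(-g)^{kn}$ for a generator $g\in G$ (the factorizations $0\cdot(g^n)^k((-g)^n)^k$ and $0\cdot(g(-g))^{kn}$ exhibit $\{2k+1,kn+1\}\subseteq\Lo(B)$). The substance of the theorem is therefore the matching upper bound $\rho_{2k+1}(H)\le kn+1$.

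For the upper bound I would introduce the standard $g$-adic weight: fix a generator $g$ and, for $h=a(h)g\in C_n$ with $a(h)\in[0,n-1]$, set $w(h)=a(h)/n$, extended additively to sequences. One checks immediately that $w(S)\in\N_0$ for every $S\in\bc(C_n)$ and that $w(U)\ge 1$ for every atom $U\ne 0$; the dual weight $w^-$ built from the generator $-g$ then satisfies $w(S)+w^-(S)=|S|-\vo_0(S)$. In any factorization $B=V_1\cdots V_r$, exactly $\vo_0(B)$ of the $V_j$ equal the atom $0$ while each remaining atom contributes at least $1$ to each of $w(B)$ and $w^-(B)$, so
\[
\max\Lo(B)\;\le\;\vo_0(B)+\min\bigl(w(B),\,w^-(B)\bigr).
\]

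To apply the hypothesis $(2k+1)\in\Lo(B)$, I would write $B=U_1\cdots U_{2k+1}$ and set $v=\vo_0(B)$; the $2k+1-v$ non-zero $U_i$ have length at most $\Do(C_n)=n$, so $w(B)+w^-(B)=|B|-v\le n(2k+1-v)$, and bounding $\min$ by the mean gives $\max\Lo(B)\le v+\tfrac{n}{2}(2k+1-v)$. A direct calculation shows the right-hand side is $\le kn+1$ iff $(1-v)(n-2)\le 0$, i.e.\ iff $v\ge 1$ (recall $n\ge 3$). This settles the case $0\mid B$ by pure weight counting.

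The main obstacle is therefore the case $\vo_0(B)=0$, where the naive weight bound degrades to $(2k+1)n/2=kn+n/2$, missing the target by $(n-2)/2$. That $kn+1$ is nevertheless sharp here is shown by $B=g^{kn+1}(-g)^{kn+1}$, whose factorization $(g(-g))^{kn+1}$ realizes it, so one cannot hope to improve the conclusion by an $O(1)$ term. Closing the $(n-2)/2$-gap requires genuinely cyclic-group combinatorics: one must force an asymmetry between $w(B)$ and $w^-(B)$ from the presence of a length-$(2k+1)$ factorization. I would attempt an induction on $|B|$ (or on $k$) in which, from a length-$(2k+1)$ factorization and a putative length-$r\ge kn+2$ factorization, one extracts a common subsequence that is itself a zero-sum admitting two factorization lengths differing by $n-2$ — the prototypical case being $g^n(-g)^n$, with $\Lo=\{2,n\}$ by Lemma~\ref{lem_rho2} — and invokes the inductive hypothesis on the quotient. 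Making this precise, in particular using that in $C_n$ the only atoms of length $n$ are $h^n$ with $h$ a generator (which rigidifies the factorizations containing them), is the technical heart of the Gao--Geroldinger argument.
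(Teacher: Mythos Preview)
Your lower bound and the weight-counting argument are both correct, and the reduction to the case $\vo_0(B)=0$ is clean. But the proposal is not a proof: you explicitly leave the case $0\nmid B$ open, and this is the entire content of the theorem. The weight bound $\max\Lo(B)\le\min(w(B),w^-(B))$ only gives $kn+\lfloor n/2\rfloor$, which is the generic upper bound of~\eqref{ineq_rhok}; nothing cyclic has been used yet.

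Your sketched inductive fix---extracting a $g^n(-g)^n$-type zero-sum subsequence common to a short and a long factorization and peeling it off---is not obviously workable, and the structural input you cite is too weak. Knowing only that atoms of length exactly $n$ are of the form $h^n$ does not suffice: a factorization of length $2k+1$ with $|B|$ close to $(2k+1)n$ forces most atoms to have length \emph{close to} $n$, not equal to $n$, and those atoms need not have any common structure a priori. The Gao--Geroldinger argument relies on the Savchev--Chen and Yuan classification of minimal zero-sum sequences of length exceeding roughly $n/2$ over $C_n$ (these are all of the form $g^{a_1}\cdots g^{a_\ell}$ for a single generator $g$ with $\sum a_i=n$), which is a substantially deeper result than the length-$n$ case. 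That classification is what lets one align the short factorization with the long one and execute the reduction; without it the inductive step you describe has no traction.
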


The proof uses results on the structure of long minimal zero-sum sequences over cyclic groups (`long' is meant in a relative sense), see \cite{savchevchen07,yuan07}. As can be seen from the proof of the preceding lemma, one of the factorizations that could lead to a larger value of $\rho_{2k+1}(H)$ would have to be composed of minimal zero-sum sequences of length `close' to $\Do(G)$. Having knowledge on the structure of such sequences, allows to analyze this situation in a more explicit way. 

However, the case of cyclic groups seems to be quite exceptional, and there are various results asserting even equality at the upper bound in the inequality above. 

We recall a recent result due to Geroldinger, Grynkiewicz, Yuan \cite[Theorem 4.1]{geroldingergrynkiewiczyuan}. Moreover, they conjectured that cyclic groups and the group $C_2^2$ are the only groups for which $\rho_{3}(G)=  \Do(G) + 1$. 

\begin{theorem}
Let $H$ be Krull monoid with class group $G$ such that each class contains a prime divisor. Suppose that $G \cong \oplus_{i=1}^r C_{n_i}^{s_i}$ with $1 < n_1 \mid \dots \mid n_r$  and $s_i \ge 2$ for each $i \in [1,r]$.
Then, for every $k \in \N$, 
\[
\rho_{2k+1}(H) \ge  (k-1) \Do(G)  + \Do^{\ast}(G)+  \left \lfloor  \frac{\Do^{\ast}(G)}{2}  \right \rfloor.
\]
In particular, if  $\Do^{\ast}(G) = \Do(G)$, then $\rho_{2k+1}(G) =  k \Do(G)  +   \lfloor  \frac{\Do(G)}{2} \rfloor $ for every $k \in \N$.
\end{theorem}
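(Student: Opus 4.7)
The upper bound
\[
\rho_{2k+1}(H)\le \left\lfloor \tfrac{2k+1}{2}\Do(G)\right\rfloor = k\,\Do(G)+\left\lfloor \tfrac{\Do(G)}{2}\right\rfloor
\]
is immediate from Lemma~\ref{lem_rho2}(1); combined with the hypothesis $\Do^{\ast}(G)=\Do(G)$ of the ``in particular'' clause it matches the lower bound we are about to prove, so the content of the theorem is entirely in that main inequality.

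For the main inequality I would argue by induction on $k$, transferring the problem to $\bc(G)$ via Section~\ref{transfer}. Since every class contains a prime divisor, $G_P=G=-G_P$, so Lemma~\ref{lem_rho2}(2) gives $\rho_{2k+1}(H)\ge \rho_{2k-1}(H)+\Do(G)$, and iterating $k-1$ times reduces the entire claim to the base case $k=1$: producing some $B\in \bc(G)$ with $\{3,\,\Do^{\ast}(G)+\lfloor \Do^{\ast}(G)/2\rfloor\}\subseteq \Lo(B)$.

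The idea for constructing this $B$ is to arrange it as an \emph{antipodally symmetric} element of $\bc(G)$, meaning $\vo_g(B)=\vo_{-g}(B)$ for every $g\in G$ (with even multiplicities on any order-$2$ elements). Such a $B$ factors into $|B|/2$ atoms of length~$2$, namely $g(-g)$ or $g^2$ in the order-$2$ case; taking $|B|=2\Do^{\ast}(G)+2\lfloor \Do^{\ast}(G)/2\rfloor$ (which equals $3\Do^{\ast}(G)$ when $\Do^{\ast}(G)$ is even and $3\Do^{\ast}(G)-1$ otherwise, and in either case is even) yields exactly the required number of factors on the large side. For the three-atom factorization I would write $B=U_1U_2U_3$ with two of the $U_i$ chosen as variants of the standard maximal atom $U_0=(-e_0)\prod_{i,j} e_{i,j}^{n_i-1}$ of length $\Do^{\ast}(G)$, where $(e_{i,j})_{i\in[1,r],\,j\in[1,s_i]}$ is a basis of $G$ adapted to its invariant factor decomposition and $e_0=\sum_{i,j} e_{i,j}$, placed on opposite ``sides'' of the basis; the third $U_i$, of length $\Do^{\ast}(G)$ or $\Do^{\ast}(G)-1$, is selected so as to annihilate the residual antipodal imbalance of the first two.

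The hard part is exactly this last balancing step: one must simultaneously verify minimality of each $U_i$ and antipodal symmetry of the product, and this is where the hypothesis $s_i\ge 2$ enters essentially. Having at least two basis elements of each order $n_i$ provides the room to swap one of them for its negative when assembling the different $U_i$, so that the imbalance contributions across the distinct coordinate axes can be made to cancel; the corresponding flexibility is absent in cyclic components, consistent with the fact that the same bound fails for cyclic groups (cf.\ the Gao--Geroldinger result recalled above). The pattern is already visible in $G=C_2^2$, where $B=e_1^4 e_2^2(e_1+e_2)^2$ is antipodally symmetric (trivially, since $-g=g$) and admits simultaneously the $3$-atom factorization $(e_1^2)(e_1 e_2(e_1+e_2))^2$ and the $4$-atom factorization $(e_1^2)^2(e_2^2)((e_1+e_2)^2)$; the general construction for $G$ with $s_i\ge 2$ is a more delicate variation on this theme.
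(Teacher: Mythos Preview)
The paper does not contain a proof of this theorem: it is a survey, and the result is merely quoted from \cite[Theorem~4.1]{geroldingergrynkiewiczyuan} without argument. So there is no ``paper's own proof'' to compare against; I can only evaluate your plan on its merits.

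Your reductions are sound. The upper bound is indeed Lemma~\ref{lem_rho2}(1), and iterating Lemma~\ref{lem_rho2}(2) correctly reduces the main inequality to the single assertion $\rho_3(H)\ge \Do^{\ast}(G)+\lfloor \Do^{\ast}(G)/2\rfloor$. The strategy of constructing an antipodally symmetric $B$ of the right length, so that the long factorization into length-$2$ atoms is automatic, is exactly the natural approach and is the one used in \cite{geroldingergrynkiewiczyuan}.

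The gap is that you have not actually given the construction. Everything in your proposal up to ``the hard part is exactly this last balancing step'' is routine; the substance of the theorem lies entirely in exhibiting three minimal zero-sum sequences $U_1,U_2,U_3$ of lengths $\Do^{\ast}(G),\Do^{\ast}(G),\Do^{\ast}(G)-\varepsilon$ (with $\varepsilon\in\{0,1\}$) whose product is antipodally symmetric, and in verifying their minimality. Your description---two variants of the standard atom $(-e_0)\prod e_{i,j}^{n_i-1}$ ``on opposite sides of the basis'' plus a balancing third factor---is suggestive but not a construction: it does not specify which coordinates get negated, how the $e_0$-type elements are chosen, or why the resulting $U_3$ is minimal. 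The $C_2^2$ example is correct but too degenerate (every element has order $2$, so antipodal symmetry is free) to indicate how the general case goes. To turn this into a proof you must write down the $U_i$ explicitly in terms of the basis and check both minimality and symmetry; this is where the condition $s_i\ge 2$ is actually used, and it requires real care.
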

The point of considering $\Do^{\ast}(G)$ rather than $\Do(G)$ is that the former is explicitly known and one thus has explicit examples of minimal zero-sum sequences of the relevant length that can be used to construct examples. By contrast, $\Do(G)$ is in general not known, and thus knowledge on zero-sum sequence of this length can only be obtained by general considerations.

For other conditions that imply equality  at the upper bound in \eqref{ineq_rhok} see for example \cite[Theorem 6.3.4]{geroldingerhalterkochBOOK}. Indeed, Geroldinger, Grynkiewicz, Yuan \cite[Conjecture  3.3]{geroldingergrynkiewiczyuan} put forward the conjecture that for sufficiently large $k$ this equality always holds for non-cyclic groups. 
  
\begin{conjecture}
Let $H$ be Krull monoid with finite non-cyclic class group $G$ such that each class contains a prime divisor. Then there exists some $k^{\ast} \in \N$ such that for each $k \ge k^{\ast}$ one has 
\[
\rho_{2k+1}(H) =  k \Do(G) + \left \lfloor  \frac{\Do(G)}{2} \right \rfloor . 
\]
\end{conjecture}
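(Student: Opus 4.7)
The plan is to establish the matching lower bound to the upper bound from Lemma \ref{lem_rho2}, reducing the construction of a ``good'' element for every large $k$ to a single base case via repeated multiplication by $(-U)U$. The upper bound $\rho_{2k+1}(H) \le k\Do(G) + \lfloor \Do(G)/2\rfloor$ is already immediate from part~1 of Lemma \ref{lem_rho2}: transferring to $\bc(G)$ via the block homomorphism (recall $G_P = G$ under the standing hypothesis), we get $\rho_{2k+1}(H) \le (2k+1)\Do(G)/2$, and since $\rho_{2k+1}(H)$ is an integer this rounds down to exactly $k\Do(G) + \lfloor \Do(G)/2\rfloor$. So the content of the conjecture is the reverse inequality, and the work lies entirely on the constructive side.

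The reduction step is the following. Suppose one has produced, for some $k_0 \in \N$, an element $B_0 \in \bc(G)$ with
\[
\bigl\{2k_0+1,\ k_0\Do(G) + \lfloor \Do(G)/2 \rfloor\bigr\} \subset \Lo(B_0).
\]
Pick a minimal zero-sum sequence $U = g_1 \dots g_{\Do(G)} \in \ac(G)$ of maximum length, and for $j \in \N_0$ set $B_j = ((-U)U)^{j} B_0$. The two factorizations $(-U)\cdot U$ and $\prod_{i=1}^{\Do(G)}(-g_i)g_i$ give $\{2, \Do(G)\} \subset \Lo((-U)U)$, so iterating Lemma \ref{lem_add} yields $\{2j,\ j\Do(G)\} \subset \Lo(((-U)U)^j)$ and hence
\[
\bigl\{2(k_0+j)+1,\ (k_0+j)\Do(G) + \lfloor \Do(G)/2 \rfloor\bigr\} \subset \Lo(B_j).
\]
Therefore $\rho_{2(k_0+j)+1}(H) \ge (k_0+j)\Do(G) + \lfloor \Do(G)/2\rfloor$ for all $j \ge 0$, which combined with the upper bound proves the conjecture with $k^{\ast} = k_0$. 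The problem reduces to producing one such base pair $(k_0, B_0)$.

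Consequently everything hinges on the base case. The most direct attempt is $k_0 = 1$: choose $B_0 = U_1 U_2 U_3$ with each $|U_i| = \Do(G)$ and seek an alternative factorization of length $\lfloor 3\Do(G)/2\rfloor$. Attaining this extreme forces every atom in the alternative factorization to have length $2$ (with at most one length-$3$ atom when $\Do(G)$ is odd), which in turn forces $B_0$ to be essentially symmetric in the sense $\vo_g(B_0) = \vo_{-g}(B_0)$ for all $g$. When $\Do(G) = \Do^{\ast}(G)$ one has explicit maximum-length atoms built from a basis, and the Geroldinger--Grynkiewicz--Yuan theorem recalled above realises exactly such a symmetric triple, which is already enough to settle the conjecture in that case.

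The genuine obstacle, and the reason the conjecture is open, is the case $\Do(G) > \Do^{\ast}(G)$: here no explicit description of atoms of length $\Do(G)$ is known, so one cannot pair them up by hand. A promising escape is to allow $k_0$ to be large, combining several maximum-length atoms so that the symmetry constraint need only hold ``on average'', or, alternatively, to use atoms of length $\Do^{\ast}(G)$ together with a small correction factor that absorbs the gap $\Do(G) - \Do^{\ast}(G)$ while preserving the extreme length. Making either of these strategies precise, or devising a non-constructive argument that bypasses explicit atoms of length $\Do(G)$ altogether, is where I expect the main difficulty to lie.
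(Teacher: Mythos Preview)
The statement you are addressing is labelled a \emph{Conjecture} in the paper and is not proved there; the paper only records partial progress (the Geroldinger--Grynkiewicz--Yuan and Fan--Zhong results) and explicitly flags the general case as open. So there is no ``paper's own proof'' to compare against.

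That said, your write-up is an accurate account of what is actually known. The upper bound via Lemma~\ref{lem_rho2} and the reduction-to-a-base-case argument via multiplication by $(-U)U$ are both correct, and this is exactly the mechanism behind Lemma~\ref{lem_rho2}\,(2) and behind the Fan--Zhong result the paper quotes: once a single $k_0$ with $\rho_{2k_0+1}(H) = k_0\Do(G) + \lfloor \Do(G)/2\rfloor$ is exhibited, the conjecture follows for all $k \ge k_0$. You have also correctly located the genuine obstruction, namely that for $\Do(G) > \Do^{\ast}(G)$ one has no structural description of atoms of length $\Do(G)$, so the symmetric-triple construction that works when $\Do(G)=\Do^{\ast}(G)$ cannot be carried out explicitly. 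Your proposal is therefore not a proof but a correct summary of the state of the art together with a reasonable (if speculative) indication of where a proof might be sought; just be aware that this is precisely the gap that remains open in the literature, not something you have overlooked.
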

To restrict to sufficiently large $k$ is certainly necessary, as the following result illustrates, see Geroldinger, Grynkiewicz, Yuan \cite[Theorem 5.1]{geroldingergrynkiewiczyuan}.

\begin{theorem}
Let $H$ be Krull monoid with class group $G$ such that each class contains a prime divisor. Suppose that $G \cong C_m\oplus C_{mn}$ with $m \ge 2$ and $n \ge 1$.
Then  
\[
\rho_{3}(H) =   \Do(G)  + \left \lfloor  \frac{\Do(G)}{2}\right  \rfloor  \text{ if and only if } n=1 \text{ or } n=m =2.
\]
\end{theorem}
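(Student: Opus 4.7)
My plan is to handle the two directions separately, working in the transferred setting of $\bc(G)$ and using throughout the general upper bound $\rho_3(H) \le \Do(G) + \lfloor \Do(G)/2 \rfloor$ established in the proof of Lemma \ref{lem_rho2}.

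For the sufficiency direction there are two subcases. When $n = 1$ we have $G \cong C_m \oplus C_m$, so the hypotheses of the preceding theorem hold with $s_1 = 2$; moreover, for rank-two groups it is classical that $\Do(G) = 2m - 1 = \Do^{\ast}(G)$, so that theorem specialised to $k=1$ yields $\rho_{3}(H) = \Do(G) + \lfloor \Do(G)/2 \rfloor$ at once. When $n = m = 2$ we have $G \cong C_2 \oplus C_4$ and $\Do(G) = 5$, so the goal is $\rho_3(H) \ge 7$. Here I would exhibit an explicit $B \in \bc(G)$ with $\{3, 7\} \subset \Lo(B)$, starting from an atom of length five such as $U = e_1(e_1+e_2)e_2^3$ and its negative and building a product of three atoms with $|B|$ equal to $14$ or $15$ that regroups into a seven-atom factorization consisting (almost) entirely of length-two pairs $g(-g)$. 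This reduces to a finite verification.

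For the necessity direction, assume $n \ge 2$ and $(n,m) \ne (2,2)$, so $\Do(G) = m(n+1) - 1 \ge 5$. I would argue by contradiction and suppose some $B \in \bc(G)$ achieves $\{3, \Do(G) + \lfloor \Do(G)/2 \rfloor\} \subset \Lo(B)$. Revisiting the upper-bound argument in Lemma \ref{lem_rho2}, extremal equality forces very rigid structure: in the long factorization $B = V_1 \cdots V_l$ almost every $V_j$ must have $|V_j|=2$ and hence be of the form $g_j(-g_j)$, while in the short factorization $B = U_1 U_2 U_3$ each $|U_i|$ must be at or near $\Do(G)$. The first constraint makes $\supp(B)$ essentially symmetric under $g \mapsto -g$; the second forces each $U_i$ to be a minimal zero-sum sequence of maximal (or near-maximal) length. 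I would then invoke the known structure theorems for maximal-length minimal zero-sum sequences over rank-two groups $C_m \oplus C_{mn}$ (from the work of Gao--Geroldinger, Geroldinger--Grynkiewicz, and related contributions) and show that three such atoms cannot jointly produce the required symmetric product outside the two exceptional families.

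The main obstacle is clearly the necessity direction. Sufficiency reduces either to the previously stated theorem (for $n=1$) or to a finite explicit construction (for $n=m=2$), whereas ruling out the extremal case for all remaining rank-two groups $C_m \oplus C_{mn}$ requires a delicate classification of the minimal zero-sum sequences of length $\Do(G)$ in these groups. The rank-two structural theory is substantially more intricate than its cyclic analogue, and combining the symmetry of $\supp(B)$ with the detailed form of three such maximal-length atoms is where the technical heart of the result lies.
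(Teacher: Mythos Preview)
Your outline matches what the paper indicates about the proof. The paper is a survey and does not give a proof of this result; it attributes it to Geroldinger, Grynkiewicz, and Yuan and remarks only that ``the proof uses the fact that the structure of minimal zero-sum sequences of maximal length is known for groups of rank $2$'' (with references to Reiher, Gao--Geroldinger--Grynkiewicz, and Schmid). That is precisely the engine you propose for the necessity direction, and your reduction via the extremal-case analysis of Lemma~\ref{lem_rho2} (forcing the three atoms $U_1,U_2,U_3$ to have length $\Do(G)$ or $\Do(G)-1$, and $B$ to be essentially a product of pairs $g(-g)$) is the right way to set up the application of that structure theory. Your handling of sufficiency---invoking the preceding theorem for $C_m^2$ (where $\Do(G)=\Do^{\ast}(G)=2m-1$) and reducing $C_2\oplus C_4$ to an explicit construction---is also sound and is essentially a bookkeeping check.

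The one place where your sketch understates the difficulty is the necessity case-analysis itself: the classification of maximal-length atoms over $C_m\oplus C_{mn}$ (the ``Property~B'' theorem for rank two) produces several parametric families, and verifying that no three of them can multiply to a symmetric product of the required length is a genuine case distinction rather than a one-line consequence. But you have correctly identified both where the work lies and which external input carries it, in agreement with the paper's own summary.
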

The proof uses the fact that the structure of minimal zero-sum sequences of maximal length is known for groups of rank $2$ (see \cite{reiherB,GGGinverse3,WASinverse2}). To put this in context, we remark that to know the sequences of maximal lengths allows to exclude equality at the upper bound for most groups of rank $2$; to get further improved  upper bounds might need knowledge on the structure of long (yet not maximum length) minimal zero-sum sequences in addition, as known and used in the case of  cyclic groups.

This result allows to give examples of groups where the actual value of $\rho_3(G)$ can be neither the upper nor the lower bound in \eqref{ineq_rhok}. An example is $C_2 \oplus C_{2n}$ for $n \ge 3$; 
however, in line with the above mentioned conjecture, one still has equality of $\rho_{2k+1}(G)$ with the upper bound for $k \ge 2n-1$ (see \cite[Corollary 5.3]{geroldingergrynkiewiczyuan}). 

Very recently Fan and Zhong \cite[Theorem 1.1]{fanzhong} made considerable progress towards the above-mentioned conjecture. In particular, they verified it under the assumption that $\Do(G)= \Do^{\ast}(G)$. 

\begin{theorem}
Let $H$ be Krull monoid with finite non-cyclic class group $G$ such that each class contains a prime divisor. Then there exists some $k^{\ast} \in \N$ such that for each $k \ge k^{\ast}$ one has 
\[
\rho_{2k+1}(H) \ge  (k-k^{\ast}) \Do(G) + k^{\ast} \Do^{\ast}(G)  +  \left \lfloor  \frac{\Do^{\ast}(G)}{2} \right \rfloor. 
\]
In particular, if $\Do(G)= \Do^{\ast}(G)$, then $\rho_{2k+1}(H) =  k \Do(G)   +  \left \lfloor  \frac{\Do(G)}{2} \right \rfloor$ for $k \ge k^{\ast}$.
\end{theorem}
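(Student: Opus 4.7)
The goal is to construct, for every sufficiently large $k$, an element $B \in \bc(G)$ whose set of lengths contains both $2k+1$ and the claimed lower bound; via the transfer homomorphism recalled in Section~\ref{transfer} this suffices to prove the statement for $H$. The argument extends the ``multiply by $(-U)U$'' technique used in Lemma~\ref{lem_rho2}. Fix $U \in \ac(G)$ of maximal length $|U|=\Do(G)$; exactly as in the proof of Lemma~\ref{lem_rho2}(2), $\{2,\Do(G)\}\subseteq \Lo\bigl((-U)U\bigr)$.

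Next, I would construct a seed $B_0 \in \bc(G)$ witnessing the case $k=k^\ast$, namely satisfying
\[
\{2k^\ast+1,\;k^\ast\Do^\ast(G)+\lfloor \Do^\ast(G)/2\rfloor\}\subseteq \Lo(B_0).
\]
Here I would exploit that $\Do^\ast(G)$ is realized by completely explicit minimal zero-sum sequences built from a basis: for a basis $(e_1,\dots,e_r)$ of $G$ with orders $n_1\mid\cdots\mid n_r$ and $e_0=-\sum_{i=1}^r(n_i-1)e_i$, the sequence $V=e_0\prod_{i=1}^r e_i^{n_i-1}$ is a minimal zero-sum sequence of length $\Do^\ast(G)$, and the product $V(-V)$ splits into $\Do^\ast(G)$ atoms of length $2$ via the pairings $(-g)g$. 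The seed $B_0$ is assembled from $k^\ast$ structural pairs of this type together with one additional minimal zero-sum sequence of length $\Do^\ast(G)$ whose recombination with parts of these pairs---made possible by the non-cyclicity $r\ge 2$---provides the extra $\lfloor \Do^\ast(G)/2\rfloor$ summand in the long factorization.

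With $B_0$ in hand, set $B := B_0 \cdot \bigl((-U)U\bigr)^{k-k^\ast}$. Applying $\Lo(X)+\Lo(Y)\subseteq \Lo(XY)$ from Lemma~\ref{lem_add} to the two chosen factorizations of $B_0$ and the two lengths of $(-U)U$ displayed above yields
\[
\bigl\{2k+1,\;(k-k^\ast)\Do(G)+k^\ast\Do^\ast(G)+\lfloor \Do^\ast(G)/2\rfloor\bigr\}\subseteq \Lo(B),
\]
which gives the stated lower bound. When $\Do(G)=\Do^\ast(G)$, this lower bound matches the upper bound $\rho_{2k+1}(H)\le k\Do(G)+\lfloor \Do(G)/2\rfloor$ supplied by Lemma~\ref{lem_rho2}(1), so equality follows.

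The main obstacle is the construction of the seed $B_0$: producing both the short length $2k^\ast+1$ and the precise large length $k^\ast\Do^\ast(G)+\lfloor \Do^\ast(G)/2\rfloor$ requires a delicate case analysis on the invariant factor decomposition of $G$, and in particular on whether the hypothesis $s_i\ge 2$ of the Geroldinger--Grynkiewicz--Yuan theorem already applies (in which case essentially $k^\ast=1$ suffices and one recovers that theorem). In the remaining cases the structural deficit of $G$ must be absorbed by enlarging $k^\ast$, and this dependence of $k^\ast$ on $G$ is precisely what forces the hybrid form of the lower bound appearing in the statement.
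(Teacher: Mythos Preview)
The paper itself does not prove this theorem; it is stated as a recent result of Fan and Zhong \cite[Theorem 1.1]{fanzhong}, so there is no paper proof to compare against directly. I can therefore only assess your proposal on its own merits.

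Your reduction is correct and is indeed the natural framework: once a seed $B_0$ with $\{2k^\ast+1,\,k^\ast\Do^\ast(G)+\lfloor \Do^\ast(G)/2\rfloor\}\subset\Lo(B_0)$ is available, multiplying by $\bigl((-U)U\bigr)^{k-k^\ast}$ and invoking Lemma~\ref{lem_add} gives the claimed lower bound, and the ``in particular'' clause follows from the upper bound in Lemma~\ref{lem_rho2}(1). This bootstrapping step is, however, the easy part---it is already the content of Lemma~\ref{lem_rho2}(2).

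The genuine gap is that you do not construct $B_0$; you describe it only schematically (``$k^\ast$ structural pairs together with one additional minimal zero-sum sequence'') and then explicitly flag its construction as ``the main obstacle.'' But this construction \emph{is} the theorem. For groups satisfying the hypothesis $s_i\ge 2$ of the Geroldinger--Grynkiewicz--Yuan result one can indeed take $k^\ast=1$ and the seed is explicit; the whole point of the Fan--Zhong contribution is to handle the remaining non-cyclic groups, where obtaining the extra $\lfloor\Do^\ast(G)/2\rfloor$ from a single additional atom $W$ by ``recombination with parts of these pairs'' is not automatic. You have not indicated how non-cyclicity alone (rather than the stronger $s_i\ge 2$) yields such a recombination, nor how $k^\ast$ should be chosen in terms of the invariant factors of $G$ to make the construction go through. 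Until that is supplied, the proposal is an outline of the expected shape of the argument rather than a proof.
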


Having discussed $\rho_k(H)$ in some detail, we turn to the other constants. However, we see that in important cases the determination of $\uc_k(H)$ and $\lambda_k(H)$ can be reduced to the problem of determining $\rho_k(H)$. 
 
The following result is due to Freeze and Geroldinger \cite[Theorem 4.2]{freezegeroldinger08}; for another proof of this result due to Halter-Koch see \cite[Theorem 3.1.3]{geroldinger_lecturenotes}. 

\begin{theorem}
Let $H$ be a Krull monoid with finite class group such that each class contains a prime divisor. Then $\uc_{k}(H)$ is an interval for every $k \in \N$. 
\end{theorem}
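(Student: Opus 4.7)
My plan is to reduce via the block homomorphism to the monoid of zero-sum sequences and then show directly that $\uc_k(\bc(G))$ is an interval. Since each class contains a prime divisor, the block homomorphism is a transfer homomorphism in the sense of Section \ref{transfer}, so $\uc_k(H) = \uc_k(\bc(G))$; I will work in $\bc(G)$ throughout, identifying atoms with minimal zero-sum sequences.

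The key tool is the ``dual block'' construction: for each atom $U = g_1 \cdots g_n \in \ac(G)$, the element $U(-U) \in \bc(G)$ has $\{2, n\} \subset \Lo(U(-U))$, as seen from the pairing $U(-U) = \prod_{i=1}^n g_i(-g_i)$. Varying $|U|$ over the interval $[2, \Do(G)]$ yields a family of ``adjustment blocks'' whose sets of lengths exhibit precisely controlled pairs, providing the flexibility needed to shift factorization lengths in small increments.

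Given $l_1 < l_2$ in $\uc_k(\bc(G))$, realized by $B_1$ and $B_2$ respectively, and a target $j \in (l_1, l_2)$, the strategy is to start from $B_2$ with $\{k, l_2\} \subset \Lo(B_2)$ and produce a modified element whose length-$k$ factorization is preserved but whose long factorization can be arranged to have the prescribed length $j$. Concretely, one picks an atom $V$ in a length-$l_2$ factorization of $B_2$ whose signed partner can be split off and replaced by a product of shorter atoms; this shortens the long factorization by a controllable amount, and iterating (inserting blocks $U(-U)$ when fine-tuning is required) realizes every integer in $[l_1, l_2]$.

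The principal obstacle is that the naive approach via Lemma \ref{lem_add} only produces sumset inclusions $\Lo(B) + \Lo(B') \subset \Lo(BB')$, which inevitably destroys $k$ as a length upon non-trivial multiplication. The modifications must therefore be internal to a single element, and the technical heart is to show that in any length-$l$ factorization with $l > k$, some atom admits a refinement that both reduces the overall length by the desired amount and remains compatible with a length-$k$ factorization of the modified element. Verifying this compatibility, which uses the hypothesis that every class contains a prime divisor to freely adjoin ``neutral'' prime divisors where needed, is the step I expect to be hardest.
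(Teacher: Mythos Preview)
The paper itself does not supply a proof of this theorem; it merely cites Freeze--Geroldinger \cite{freezegeroldinger08} and an alternative argument of Halter-Koch recorded in \cite[Theorem~3.1.3]{geroldinger_lecturenotes}. So there is no in-paper proof to compare against, and your proposal has to be assessed on its own.

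As it stands, the proposal is a plan whose decisive step is left open, and the concrete suggestions you make for that step do not work. First, the ``signed partner'' $-V$ of an atom $V$ occurring in a length-$l_2$ factorization of $B_2$ has no reason to divide $B_2$, so there is nothing to ``split off''. Second, even when $V(-V)\mid B_2$, rewriting $V(-V)$ as $\prod_i g_i(-g_i)$ replaces two atoms by $|V|$ atoms, which \emph{increases} the factorization length rather than shortening it; inserting further blocks $U(-U)$ only adds to both the short and the long factorization via $\Lo(B)+\Lo(U(-U))\subset \Lo(BU(-U))$, so $k$ is lost. Third, ``adjoining neutral prime divisors'' in $\bc(G)$ means multiplying by $0$, which shifts every length by $1$ and therefore cannot fix $k$ while moving the other length. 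You correctly flag the compatibility verification as the hardest step, but none of the tools you list addresses it.

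The arguments in the references proceed along a different line. One reduces to showing that if $\{k,l\}\subset\Lo(B)$ with $l>k$, then $l-1\in\uc_k(G)$. Writing $B=U_1\cdots U_k=V_1\cdots V_l$ and using the common refinement of the two partitions of the multiset $B$, pigeonhole ($l>k$) gives some $U_i$ that meets two distinct $V_{j_1},V_{j_2}$ in elements $g_1,g_2$. One then replaces the pair $g_1,g_2$ by the single element $g_1+g_2$, obtaining $B'=B(g_1g_2)^{-1}(g_1+g_2)$ with $|B'|=|B|-1$. The key point, and the substitute for your missing compatibility check, is that $U_i(g_1g_2)^{-1}(g_1+g_2)$ is again an atom, so $k\in\Lo(B')$ is preserved for free; on the $V$-side one gets a factorization of length $q+l-2$ with $q\ge 1$, and an induction on $|B|$ finishes. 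No $U(-U)$ blocks or extra prime divisors are needed.
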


Thus, in this case it suffices to determine $\lambda_k(H)$ and $\rho_k(H)$ to know $\uc_k(H)$.
Moreover, it is even possible (see \cite[Corollary 3.1.4]{geroldinger_lecturenotes}) to express (in this case) the constants $\lambda_k(H)$ in terms of $\rho_k(H)$. 

\begin{theorem}
Let $H$ be a Krull monoid with finite class group $G$ such that each class contains a prime divisor.
Then for every $k \in \N_0$ we have 
\[
\lambda_{k \Do(G)+j}(H) = 
\begin{cases} 2k  & \quad \text{ for } j=0 \\
2k +1 & \quad \text{ for } j \in [1, \rho_{2k+1}(H) - k \Do(G)] \\
2k +2 & \quad \text{ for } j \in [\rho_{2k+1}(H) - k \Do(G) +1, \Do(G)-1 ]  
\end{cases}
\]
\end{theorem}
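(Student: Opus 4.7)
The plan is to combine the interval theorem just stated with a tautological duality for $\uc_{\cdot}(H)$ and then reduce to computations involving Lemma \ref{lem_rho2}. The key observation is that for any $m,n \in \N_0$ we have $m \in \uc_n(H)$ if and only if $n \in \uc_m(H)$, since both conditions simply mean that there exists some $L \in \lc(H)$ with $\{m,n\} \subset L$. Combined with the interval theorem, which gives $\uc_m(H) = [\lambda_m(H),\rho_m(H)]$, this yields the characterization
\[
\lambda_n(H) = \min\{\, m \in \N_0 : \lambda_m(H) \le n \le \rho_m(H) \,\}.
\]
The trivial bound $\lambda_m(H) \le m$ (since $m \in \Lo(u_1 \cdots u_m)$ for any choice of $m$ atoms) reduces the task to checking $n \le \rho_m(H)$ and $m \le n$ for the relevant candidates $m$.

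From Lemma \ref{lem_rho2}, recalling that $G_P = -G_P$ holds automatically since every class contains a prime divisor, we have $\rho_{2l}(H) = l \Do(G)$ for every $l \in \N_0$, together with $\rho_{2l-1}(H) \le (l-1)\Do(G) + \lfloor \Do(G)/2 \rfloor < l\Do(G)$ and $\rho_{2l+1}(H) \ge l\Do(G) + 1$. The cases $|G| \le 2$ where $\Do(G) < 2$ reduce to the half-factorial situation of Theorem \ref{thm_carlitz}, so we may assume $\Do(G) \ge 2$.

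With these tools the theorem follows by case analysis on $j \in [0,\Do(G)-1]$ in $n = k\Do(G) + j$. For $j=0$: the value $n = k\Do(G)$ strictly exceeds $\rho_m(H)$ for every $m < 2k$, while $m = 2k$ satisfies both $\rho_{2k}(H) = n$ and $\lambda_{2k}(H) \le 2k \le n$. For $j \in [1, \rho_{2k+1}(H) - k\Do(G)]$: $m = 2k$ is ruled out by $n > k\Do(G) = \rho_{2k}(H)$, while $m = 2k+1$ works since $n \le \rho_{2k+1}(H)$ and $\lambda_{2k+1}(H) \le 2k+1 \le k\Do(G) + 1 \le n$. Finally, for $j \in [\rho_{2k+1}(H) - k\Do(G) + 1, \Do(G) - 1]$: $m = 2k+1$ fails by $n > \rho_{2k+1}(H)$, but $m = 2k+2$ succeeds because $\rho_{2k+2}(H) = (k+1)\Do(G) \ge n$ and $\lambda_{2k+2}(H) \le 2k+2 \le n$ (the latter using $j \ge 2$, which comes from $\rho_{2k+1}(H) \ge k\Do(G) + 1$). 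There is no genuine obstacle in the argument; the only real conceptual ingredient is recognizing the duality $\lambda_n \leftrightarrow \rho_m$ afforded by the interval theorem, after which the rest is routine bookkeeping with the bounds of Lemma \ref{lem_rho2}.
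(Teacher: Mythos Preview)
Your argument is correct and is essentially the standard one; the paper itself does not prove this theorem but refers to \cite[Corollary 3.1.4]{geroldinger_lecturenotes}, where the same duality $m \in \uc_n(H) \Leftrightarrow n \in \uc_m(H)$ together with the interval theorem and the bounds of Lemma~\ref{lem_rho2} is the underlying mechanism. Two minor points worth tightening: first, you implicitly use that $\rho_m(H)$ is non-decreasing in $m$ when you only verify $n > \rho_{m_0-1}(H)$ rather than $n > \rho_m(H)$ for all $m < m_0$ (this monotonicity follows at once from $\uc_m(H)+1 \subset \uc_{m+1}(H)$, or directly from Lemma~\ref{lem_rho2}.1 via $\rho_m(H) \le m\Do(G)/2$); second, the remark about $|G| \le 2$ is slightly off, since for $|G|=2$ one still has $\Do(G)=2$ and the formula applies verbatim, while for $|G|=1$ the formula as stated is vacuous or degenerate anyway.
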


We turn to results on $\uc_M(H)$ where $M$ is not a singleton. 
In view of the results above, we see that if $\min M$ and $\max M$ are too far apart then for $H$ a Krull monoid with finite class group the sets $\uc_M (H)$ will always be empty. 
Specifically, when $\min M=k$, then $\uc_M (H)$ is empty if $M$ contains some element greater than $\rho_k(H)$. 

Considering $\uc_{\{k, \rho_k(H)\}}(H)$ is thus an interesting extremal case. 
This problem was investigated recently by Baginski, Geroldinger, Grynkiewicz, Philipp \cite{baginskietal13}, with a focus on groups of rank two; again, it is important to know the structure of minimal zero-sum sequences of maximal length. 

We start by recalling an older result for cyclic groups and elementary $2$-groups (see \cite[Theorem 6.6.3]{geroldingerhalterkochBOOK}).  

\begin{theorem}
\label{thm_u2cycel2}
Let $H$ be a Krull monoid with finite class group such that each class contains a prime divisor.
Then, 
\(
\uc_{\{2, \rho_2(H)\}}(H)= \{2, \rho_2(H)\}
\)
if and only if the class group is cyclic or an elementary $2$-group.
\end{theorem}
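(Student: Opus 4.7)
The plan is to transfer the problem to the monoid of zero-sum sequences $\bc(G)$ via the block homomorphism, where $G$ is the class group, and to analyze the elements whose set of lengths contains both $2$ and $\rho_2(H)$. By Lemma \ref{lem_rho2}, $\rho_2(H) = \Do(G)$. I assume $\Do(G) \ge 3$; the cases $|G| \le 2$ are immediate. Suppose $B \in \bc(G)$ satisfies $\{2, \Do(G)\} \subset \Lo(B)$, and write $B = UV = W_1 \cdots W_{\Do(G)}$ with $U, V, W_j$ atoms. A short case analysis on whether $0 \mid B$ (modeled on the argument in Lemma \ref{lem_rho2}) reduces to $0 \nmid B$, whence $|W_j| \ge 2$ and $|B| \ge 2\Do(G)$; combined with $|U|, |V| \le \Do(G)$, this forces $|U| = |V| = \Do(G)$ and each $|W_j| = 2$. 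The length-$2$ atoms are exactly of the form $g(-g)$, and matching multiplicities forces $V = -U$. Hence $\uc_{\{2, \Do(G)\}}(H) = \bigcup_{U} \Lo(U(-U))$, the union being over atoms $U$ of length $\Do(G)$.

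For the ``if'' direction I would compute $\Lo(U(-U))$ in each admissible case. If $G = C_n$ with $n \ge 3$, a standard partial-sum argument shows every atom of length $n$ in $C_n$ equals $g^n$ for some generator $g$. The only atoms in $\supp(U(-U)) = \{g, -g\}$ are $g^n$, $(-g)^n$, and $g(-g)$, and a multiplicity count gives $\Lo(U(-U)) = \{2, n\}$. If $G = C_2^r$ with $r \ge 2$, then $-U = U$, so $U(-U) = U^2 = \prod_i g_i^2$. For any atom $Z \mid U^2$: if some $g_i$ appears twice in $Z$, then $g_i^2$ is a zero-sum subsequence of $Z$, so $Z = g_i^2$ by minimality; otherwise $Z$ is a subsequence of $U$ with each letter of multiplicity at most $1$, and minimality of $U$ forces $Z = U$. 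A parity argument on the number of copies of $U$ one can use shows $\Lo(U^2) = \{2, r+1\}$.

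For the ``only if'' direction I assume $G$ is neither cyclic nor an elementary $2$-group, and handle the principal subcase $\Do(G) = \Do^{\ast}(G)$ (which covers rank $2$ and $p$-groups). Fix a basis $(e_i)_{i=1}^r$ of $G$ with orders $n_1, \ldots, n_r$, set $h = e_1 + \cdots + e_r$, and take the standard maximal atom $U = e_1^{n_1 - 1} \cdots e_r^{n_r - 1} \cdot h$ of length $\Do(G)$. The sequences $A = h \cdot (-e_1) \cdots (-e_r)$ and $-A = (-h) \cdot e_1 \cdots e_r$, each of length $r+1$, are easily checked to be atoms (any proper subsum is a nonempty partial sum of the independent $e_i$), and both divide $U(-U)$ since $h$ has multiplicity $1$ and each $e_i, -e_i$ multiplicity $n_i - 1 \ge 1$ in $U(-U)$. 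After extracting $A \cdot (-A)$, the remainder is $\prod_{i=1}^r (e_i(-e_i))^{n_i - 2}$, which splits into $\sum_{i}(n_i - 2)$ atoms of length $2$. The resulting factorization of $U(-U)$ has length $2 + \sum_i (n_i - 2) = \Do(G) - r + 1$. Since $r \ge 2$ this is strictly less than $\Do(G)$, and since some $n_i \ge 3$ (the non-elementary-$2$-group assumption) it is strictly greater than $2$. Thus $\Lo(U(-U))$ contains an intermediate length, so $\uc_{\{2,\Do(G)\}}(H) \ne \{2, \Do(G)\}$.

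The main obstacle is the remaining subcase $\Do(G) > \Do^{\ast}(G)$, which occurs only for certain non-cyclic groups of rank at least $4$. There no explicit atom of length $\Do(G)$ is available, so the construction above cannot be invoked directly; one must either establish structural information on arbitrary maximal-length atoms (enough to exhibit, for any such $U$, two ``complementary'' short atoms dividing $U(-U)$ as in the standard case) or reduce to a suitable rank-$2$ subgroup. Closing this technical gap is the heart of the proof.
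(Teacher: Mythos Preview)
The paper does not prove this theorem; it is quoted from \cite[Theorem 6.6.3]{geroldingerhalterkochBOOK} without argument. So there is no in-paper proof to compare against, and I comment on your proposal directly.

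Your reduction and your ``if'' direction are correct. The step ``matching multiplicities forces $V=-U$'' deserves one more word: it is not automatic in general that $B=UV$ with $|U|=|V|=\Do(G)$ and $-B=B$ implies $V=-U$, but it \emph{does} hold in the two cases you need. For $G$ cyclic of order $n\ge 3$ the only atoms of length $n$ are $g^n$ with $g$ a generator, and the symmetry $\vo_g(B)=\vo_{-g}(B)$ then forces the second factor to be $(-g)^n$. For $G\cong C_2^r$ every element occurs at most once in an atom, so $\vo_g(UV)$ even for all $g$ forces $\supp(U)=\supp(V)$, hence $U=V=-U$. With this, your computations of $\Lo(U(-U))$ in the two cases are fine.

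The genuine gap is exactly the one you name yourself. Your ``only if'' argument produces, for $G$ non-cyclic and non-elementary-$2$, an explicit atom $U=e_1^{\,n_1-1}\cdots e_r^{\,n_r-1}h$ of length $\Do^{\ast}(G)$ together with an intermediate factorization of $U(-U)$; this settles the claim only when $\Do(G)=\Do^{\ast}(G)$. For groups with $\Do(G)>\Do^{\ast}(G)$ you have no atom of the correct length in hand, and your proposal stops. This is not a technicality that evaporates on closer inspection: one really must say something about an \emph{arbitrary} atom $U=g_1\cdots g_{\Do(G)}$ of maximal length. The argument in \cite[Theorem 6.6.3]{geroldingerhalterkochBOOK} proceeds in that way, using general structural information about such $U$ (for instance that $\langle\supp(U)\rangle=G$, so that $\supp(U)$ cannot lie in a cyclic subgroup when $G$ is non-cyclic) to locate inside $U(-U)$ a pair of short complementary atoms of the same shape as your $A$ and $-A$, yielding a factorization of length strictly between $2$ and $\Do(G)$. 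Your diagnosis of what remains to be done is thus accurate; what is missing is precisely that structural argument, and supplying it is indeed the substantive part of the ``only if'' direction.
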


For groups of rank $2$ the set $\uc_{\{2, \rho_2(H)\}}(H)$ is a lot larger as shown in \cite[Theorem 3.5]{baginskietal13}. 

\begin{theorem}
Let $H$ be a Krull monoid with class group $G \cong C_m \oplus C_{mn}$ where $m,n \in \N$ and $m\ge 2$ such that each class contains a prime divisor.
Then,
\[
\uc_{\{2, \rho_2(H)\}}(H)= 
\begin{cases}
\{2a \colon a \in [1,n]\} \cup \{\rho_2(H)\} &  \text{for } m = 2\\
[2, \rho_2(H)] &  \text{for }   m \in [3,4]\\
[2, \rho_2(H)] \setminus \{3\} &  \text{for }   m \ge 5  
\end{cases}
\]  
\end{theorem}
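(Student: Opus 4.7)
My plan is to first pass to the monoid of zero-sum sequences via the block homomorphism (Section \ref{transfer}), so that it suffices to analyse $B \in \bc(G)$ with $\{2, \rho_2(H)\} \subset \Lo(B)$, where $G = C_m \oplus C_{mn}$. Since groups of rank two satisfy $\Do(G) = \Do^{\ast}(G) = m + mn - 1$, Lemma \ref{lem_rho2} gives $\rho_2(H) = \Do(G)$. Any such $B$ admits a factorization $B = UV$ with $U, V \in \ac(G)$, forcing $|B| \le 2\Do(G)$; it also admits a factorization $B = W_1 \cdots W_{\Do(G)}$ with $|W_i| \ge 2$, forcing $|B| \ge 2\Do(G)$ (and in particular $0 \notin \supp B$). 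Equality throughout shows that $|U| = |V| = \Do(G)$ and each $W_i$ has length exactly $2$; the length-$2$ atoms are precisely the sequences $g(-g)$ with $g \ne 0$. Hence $B$ is \emph{symmetric}, i.e.\ $\vo_g(B) = \vo_{-g}(B)$ for all $g \in G$ (with $\vo_g(B)$ even when $2g = 0$).

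The plan then is to exploit the inverse structure theorem for minimal zero-sum sequences of length $\Do(G)$ over $C_m \oplus C_{mn}$ (\cite{reiherB,GGGinverse3,WASinverse2}), which parametrises $U$ (and hence, by symmetry of $B$, essentially $V$) in terms of a basis of $G$. Given this explicit form of $U$ and the symmetry of $B$, the achievable lengths are obtained by regrouping the prime divisors of $B$: starting from the decomposition $B = U V$ (length $2$), one successively splits off an atom $g(-g)$ from the ends of $U$ and $V$, producing a chain of intermediate factorizations. For each $m$, I would write down which lengths such regroupings can hit. For lower bounds (non-achievability) I would argue that any factorization $B = A_1 \cdots A_k$ induces a decomposition of the two atoms $U$ and $V$ into zero-sum pieces, and combine this with the parametric form of $U, V$.

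For the case analysis: when $m = 2$, elements of order $2$ enter each atom in pairs (since $2g=0$ forces even multiplicity whenever $g$ appears in any atom of $B$), and tracking multiplicities modulo $2$ in the structural form of $U$ forces every factorization length except $\Do(G)$ to be even, yielding exactly $\{2,4,\dots,2n\}\cup\{\Do(G)\}$. When $m \in \{3,4\}$, the parametric forms are sufficiently rich to write down a factorization of every length in $[2,\Do(G)]$; the construction is explicit (split $U$ and $V$ into an atom of length $2$ plus the remaining atom of length $\Do(G)-1$ or into two atoms from $U$ and two from $V$ of balanced lengths). When $m \ge 5$, the bulk of the interval $[4,\Do(G)]$ is handled analogously, and length $2$ is given.

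The main obstacle, and what really uses the structure theorem, is to exclude length $3$ when $m \ge 5$. A factorization $B = A_1 A_2 A_3$ with $|B| = 2\Do(G)$ forces $\max|A_i| \ge \lceil 2\Do(G)/3 \rceil$, well above the threshold beyond which the inverse theorem pins $A_i$ to a very rigid form; one then checks that the remaining prime divisors $B / A_i$ cannot split into two atoms while preserving the symmetry of $B$, the rigidity being strong enough for $m \ge 5$ but failing for $m \in \{3,4\}$. This incompatibility argument is the technical heart of the proof; the constructions of the positive assertions are, by comparison, straightforward once the parametric form of maximum-length atoms is fixed.
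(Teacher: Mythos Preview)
The survey does not prove this theorem; it is quoted from \cite[Theorem~3.5]{baginskietal13} without argument, so there is no in-paper proof to compare against.  Your overall strategy---reduce to $\bc(G)$, show that $\{2,\Do(G)\}\subset\Lo(B)$ forces $|B|=2\Do(G)$, $B=UV$ with $|U|=|V|=\Do(G)$, and $B$ symmetric, then invoke the rank-two inverse theorem for $U$ and $V$ and run a case analysis on $m$---is indeed the approach of that source, and the survey's own Proposition~\ref{prop_3u} (for $C_2^{r-1}\oplus C_{2n}$) illustrates the template.

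There is, however, a real gap in your plan for excluding length $3$ when $m\ge 5$.  You propose to apply the inverse theorem to the longest factor $A_i$, of length at least $\lceil 2\Do(G)/3\rceil$.  But the results you cite (\cite{reiherB,GGGinverse3,WASinverse2}) classify minimal zero-sum sequences of length \emph{exactly} $\Do(G)$; for rank-two groups there is no analogue of the Savchev--Chen/Yuan ``long sequence'' structure theorem covering lengths near $2\Do(G)/3$, so you cannot pin down $A_i$ this way.  The argument that actually works (see the proof of Proposition~\ref{prop_3u} for the mechanism) applies the inverse theorem to $U$ and $V$ themselves: one first argues $B=U(-U)$, then writes $V_j=S_j(-T_j)$ with $U=S_1S_2S_3=T_1T_2T_3$ and analyses the partial sums $\sigma(S_j)=\sigma(T_j)$ against the explicit parametrisation of $U$.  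Your $m=2$ parity sketch has a similar issue: symmetry of $B$ gives even multiplicity of order-$2$ elements in $B$, but not in every atom of an arbitrary factorization of $B$; the parity constraint on lengths has to be extracted from the structural form of $U$ via the same $S_j,T_j$ decomposition.
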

     
If the class group is a  group of rank greater than $2$, one faces the following problem. While one still knows $\rho_2(H)= \Do(G)$, one does in general not know $\Do(G)$ explicitly.  Thus, one also has only little knowledge on the form of minimal zero-sum sequences of maximal length. 

However, for most groups for which $\Do(G)= \Do^{\ast}(G)$ holds a description of $\uc_{ \{2, \rho_2(H)\}}(H)$ can still be obtained, as  more generally, $\uc_{\{2 , \Do^{\ast}(G)\}}(H)$ can be described almost completely for most groups of rank at least $3$. The following result was obtained in \cite[Theorem 4.2]{baginskietal13}.

\begin{theorem}
\label{thm_U2}
Let $H$ be a Krull monoid with class group 
$G \cong \oplus_{i=1}^r C_{n_i}$ where $1 < n_1 \mid \dots \mid n_r$ with  $r\ge 3$ and $n_{r-1} \ge 3$ such that each class contains a prime divisor.
Then,
\(
\uc_{\{2, \Do^{\ast}(G)\}}(H) \supset [2, \Do^{\ast}(G)].  
\)
In particular, if $\Do(G) = \Do^{\ast}(G)$, then $\uc_{\{ 2, \rho_2(H) \}} = [2, \rho_2(H)]$. 
\end{theorem}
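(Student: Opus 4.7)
The plan is, via the transfer homomorphism from Section~\ref{transfer}, to work inside the monoid of zero-sum sequences $\bc(G)$ and to construct, for each $k \in [2, \Do^{\ast}(G)]$, a sequence $B_k \in \bc(G)$ with $\{2, \Do^{\ast}(G), k\} \subseteq \Lo(B_k)$. Fix a basis $(e_1, \dots, e_r)$ of $G$ with $\ord(e_i) = n_i$ and set $g_0 = e_1 + \cdots + e_r$; then $U_0 = g_0 \prod_{i=1}^r e_i^{n_i - 1}$ is a minimal zero-sum sequence of length exactly $\Do^{\ast}(G)$.

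For the boundary values $k = 2$ and $k = \Do^{\ast}(G)$, the sequence $B = U_0 \cdot (-U_0)$ already works: the length-$2$ factorization is $U_0 \cdot (-U_0)$ itself, while pairing each element $g$ of $U_0$ with its negative in $-U_0$ yields the factorization $g_0(-g_0) \prod_{i=1}^r (e_i(-e_i))^{n_i-1}$ of length $\Do^{\ast}(G)$. For intermediate $k$, however, a direct inspection (e.g.\ in $G \cong C_3^3$, where $\Lo(U_0(-U_0))$ misses $k=6$) shows that $U_0$ itself must be replaced by a suitable alternative minimal zero-sum sequence $U_k$ of length $\Do^{\ast}(G)$, and I would then set $B_k = U_k \cdot (-U_k)$.

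The sequence $U_k$ is obtained from $U_0$ by length- and sum-preserving substitutions. Concretely, for a pair of indices $i < r$ with $n_i, n_r \ge 3$ (available because $r \ge 3$ and $n_{r-1} \ge 3$), I would replace the two letters $g_0$ and $e_r$ in $U_0$ by the mixed letters $e_i + e_r$ and $g_0 - e_i$; this keeps both the total length and the total sum unchanged, and one checks that the resulting sequence is still minimal. The key gain is that in $B_k = U_k \cdot (-U_k)$ the length-$3$ atom $(e_i + e_r)(-e_i)(-e_r)$ together with its partner $(-(e_i + e_r))\, e_i\, e_r$ becomes available; using these two atoms in place of the three length-$2$ pairs $(e_i + e_r)(-(e_i + e_r))$, $e_i(-e_i)$, $e_r(-e_r)$ in the diagonal factorization decreases the number of atoms by exactly one. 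By iterating such substitutions for different pairs of indices, and combining them when useful with the already-known factorizations of lengths $2 + \sum_{i \notin I}(n_i - 2)$ of $U_0 \cdot (-U_0)$ indexed by subsets $I \subseteq [1,r]$, one realises every value $k \in [2, \Do^{\ast}(G)]$.

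The main obstacle is the book-keeping: for each $k$ one must choose the substitutions (and possibly the subset $I$) so that the substituted sequence $U_k$ is still a minimal zero-sum sequence and that the associated length-$k$ factorization of $B_k$ really exists with the claimed atom count. This is a finite case analysis in terms of $k$ and the multi-set $(n_1, \dots, n_r)$, and it is here that the hypotheses $r \ge 3$ and $n_{r-1} \ge 3$ are used in an essential way, as they guarantee that two independent basis directions of order at least three are always available for the substitution. Once both points are verified, the length-$2$ factorization $U_k(-U_k)$ and the length-$\Do^{\ast}(G)$ diagonal factorization are visibly still present in $\Lo(B_k)$, proving $\uc_{\{2, \Do^{\ast}(G)\}}(H) \supseteq [2, \Do^{\ast}(G)]$; the "in particular" clause then follows from Lemma~\ref{lem_rho2}, which gives $\rho_2(H) = \Do(G)$.
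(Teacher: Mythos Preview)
The survey does not prove this theorem; it is quoted from \cite[Theorem 4.2]{baginskietal13}. So there is no in-paper proof to compare against, only the cited source.

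Your overall strategy---reduce to $\bc(G)$ via transfer, take a minimal zero-sum sequence $U$ of length $\Do^{\ast}(G)$, form $U(-U)$, and then perturb $U$ by length- and sum-preserving substitutions to realise intermediate factorization lengths---is indeed the approach of \cite{baginskietal13}. The observation that $U_0(-U_0)$ alone can miss values (your $C_3^3$ example) and that one must pass to modified sequences $U_k$ is also correct and essential.

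That said, what you have written is a plan, not a proof, and the part you defer (``the book-keeping'') is the entire substance. Two concrete gaps: first, the single substitution you describe produces exactly one new length, namely $\Do^{\ast}(G)-1$; it is not clear from your sketch how ``iterating for different pairs of indices'' is organised so that the resulting sequence stays minimal and yields the desired $k$. Second, your subset-indexed factorizations of $U_0(-U_0)$ give only the values $\{\,2+\sum_{j\notin I}(n_j-2)\colon I\subset[1,r]\,\}$, and together with $\Do^{\ast}(G)$ and $\Do^{\ast}(G)-1$ these still leave gaps once one $n_i$ is large relative to the others (try $G\cong C_3^2\oplus C_n$ for $n\ge 6$: the subset values are $\{2,3,4,n,n+1,n+2\}$, so $5,\dots,n-1$ and $n+3,\dots,\Do^{\ast}(G)-2$ are all missing). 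Bridging these gaps requires a genuinely more elaborate family of atoms of length $\Do^{\ast}(G)$ together with a careful induction; this is where the real work in \cite{baginskietal13} lies, and your sketch does not indicate how it would be carried out.
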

 
We highlight the similarity to the results on $\rho_2(H)$, where also for general groups one resorted to $\Do^{\ast}(G)$ instead of $\Do(G)$.

We end this section with a small complement to the preceding theorem, investigating the relevance of the condition on $n_{r-1}$.

\begin{proposition}
\label{prop_3u}
Let $H$ be a Krull monoid with class group 
$G \cong  C_{2}^{r-1} \oplus C_{2n}$ with  $r\ge 3$ and $n \in \N$ such that each class contains a prime divisor.
Then,
\(
3 \in \uc_{\{2, \Do^{\ast}(G)\}}(H)  
\)
if and only if $n \ge 3$. 
\end{proposition}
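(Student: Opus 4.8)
The plan is to pass, via the transfer homomorphism of Section \ref{transfer}, to the monoid $\bc(G)$ of zero-sum sequences over $G$; since every class contains a prime divisor we have $G_P = G$. Writing the invariant factors as $2 = n_1 = \dots = n_{r-1}$ and $n_r = 2n$, we get $\Do^{\ast}(G) = 1 + (r-1) + (2n-1) = r + 2n - 1$. By the second remark of this section every set of lengths containing $0$ or $1$ is a singleton, so it suffices to exhibit (for $n \ge 3$), resp.\ to exclude (for $n \le 2$), a sequence $B \in \bc(G)$ with $0 \nmid B$ and $\{2, 3, \Do^{\ast}(G)\} \subseteq \Lo(B)$; the possibility $0 \mid B$ reduces to this, since $\Lo(0^{v}B') = v + \Lo(B')$ would force $0$ or $1$ into $\Lo(B')$ and thus collapse it to a singleton.

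For the implication ``$n \ge 3 \Rightarrow 3 \in \uc_{\{2,\Do^{\ast}(G)\}}(H)$'' I would construct an explicit $B$. Fix a generator $g$ of the $C_{2n}$-component. The idea is to build a \emph{symmetric} block $B$ of length $2\Do^{\ast}(G)$ — that is, with $\vo_x(B) = \vo_{-x}(B)$ for all $x$ and $\vo_y(B)$ even for $y$ of order $2$ — as a product of two maximal atoms $A_1 A_2$. Any such $B$ automatically satisfies $2 \in \Lo(B)$ (the factorization $A_1 A_2$) and $\Do^{\ast}(G) \in \Lo(B)$ (pairing each $x$ with $-x$ and each order-$2$ element into a square yields exactly $\Do^{\ast}(G)$ atoms of length $2$). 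To get $3 \in \Lo(B)$ I would weave into $A_1,A_2$ the $\{2,3\}$-gadget $g^2(-2g)\cdot(-g)^2(2g) = (g(-g))^2\,(2g)(-2g)$ from Lemma \ref{lem_23}, so that a single $(2g)(-2g)$ can be split off against the remainder to give a factorization of length $3$. The hypothesis enters decisively here: $2g \ne -2g$ holds precisely when $4 \nmid 2n$, i.e.\ $n \ge 3$, whereas for $n \le 2$ the element $2g$ has order at most $2$ and the gadget degenerates.

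For the converse, ``$n \le 2 \Rightarrow 3 \notin \uc_{\{2,\Do^{\ast}(G)\}}(H)$'', observe that $G$ is then a $2$-group, so $\Do(G) = \Do^{\ast}(G)$, and by Lemma \ref{lem_rho2} (with $G_P = G = -G$) one has $\rho_2(H) = \Do(G)$. If $B$ with $0 \nmid B$ satisfies $\{2, \Do^{\ast}(G)\} \subseteq \Lo(B)$, then $2 \in \Lo(B)$ forces $|B| \le 2\Do(G)$ while $\Do^{\ast}(G) \in \Lo(B)$ forces $|B| \ge 2\Do^{\ast}(G)$; hence $|B| = 2\Do^{\ast}(G)$, $B$ is a product of two maximal atoms, its length-$\Do^{\ast}(G)$ factorization uses only atoms of length $2$, and $B$ is symmetric in the above sense. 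It remains to rule out $3 \in \Lo(B)$. For $n = 1$, $G \cong C_2^{r}$ is an elementary $2$-group, and Theorem \ref{thm_u2cycel2} gives $\uc_{\{2, \rho_2(H)\}}(H) = \{2, \rho_2(H)\} = \{2, \Do^{\ast}(G)\}$, which omits $3$ since $\Do^{\ast}(G) = r+1 \ge 4$. For $n = 2$, $G \cong C_2^{r-1}\oplus C_4$, and now $2g = -2g$ has order $2$, so the order-$4$ elements can only pair as $(v+g)(v-g)$; invoking the known structure of the maximal-length minimal zero-sum sequences over $C_2^{r-1}\oplus C_4$, one checks that a symmetric product of two of them has no factorization of length $3$.

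The main obstacle is twofold. In the ``if'' direction it is the explicit construction: one must simultaneously guarantee that $A_1,A_2$ are genuine maximal atoms, that $A_1 A_2$ is symmetric (so that $\Do^{\ast}(G)$ is realized), and that the embedded gadget produces a refactorization of length \emph{exactly} $3$ rather than some larger value — a length $4$ is easy to reach from the naive symmetric block $(-U)U$, and escaping it is exactly where $2g \ne -2g$ is needed. In the ``only if'' direction the work is the rigidity analysis for $C_2^{r-1}\oplus C_4$, i.e.\ controlling the maximal-length minimal zero-sum sequences precisely enough to exclude a length-$3$ factorization; this mirrors the role that the structure of long zero-sum sequences plays throughout this survey.
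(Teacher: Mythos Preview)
Your overall plan---pass to $\bc(G)$, observe that any witness $B$ with $\{2,\Do^{\ast}(G)\}\subset\Lo(B)$ and $0\nmid B$ must be symmetric of length $2\Do^{\ast}(G)$, and dispose of $n=1$ via Theorem~\ref{thm_u2cycel2}---is correct and matches the paper. The difficulties are in the two remaining cases, and in both your outline falls short.

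For $n\ge 3$ your proposed mechanism does not do the work you claim. First, a slip: $2g\neq -2g$ is equivalent to $2n\nmid 4$, not to $4\nmid 2n$; the conclusion $n\ge 3$ is right but the intermediate step is not. More seriously, the Lemma~\ref{lem_23} gadget $g^2(-2g)\cdot(-g)^2(2g)$ is already a product of two length-$3$ atoms whenever $\ord g\ge 3$, i.e.\ for $n\ge 2$, so it does \emph{not} degenerate at $n=2$ and cannot by itself explain the threshold. And even granting that the gadget sits inside a symmetric $B=A_1A_2$, ``splitting off $(2g)(-2g)$'' leaves a sequence of length $2\Do^{\ast}(G)-2$ that must factor into exactly two atoms; nothing you have said forces this. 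The paper does not use this gadget at all. It writes down the explicit atom
\[
U=f^{2n-3}(f+e_1)^3(f+e_2)\bigl(-f+e_1+\dots+e_{r-1}\bigr)e_3\cdots e_{r-1}
\]
and an explicit length-$3$ factorization $(-U)U=V_1V_2V_3$ with $V_3=(f+e_1)^2(-f)^2$ and $|V_1|=|V_2|=\Do^{\ast}(G)-2$. The constraint $n\ge 3$ enters because $V_2$ contains the factor $(-f)^{2n-5}$.

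For $n=2$ your appeal to ``the known structure of the maximal-length minimal zero-sum sequences over $C_2^{r-1}\oplus C_4$'' is a placeholder, not an argument. The paper avoids any such classification. It writes a hypothetical length-$3$ factorization $(-U)U=V_1V_2V_3$ with $V_i=S_i(-T_i)$ and $U=S_1S_2S_3=T_1T_2T_3$, shows via the quotient $G/G[2]\cong C_2$ that some $\sigma(S_i)$ has order $2$, and then runs a short $\gcd$ case analysis whose key input is that a minimal zero-sum sequence of maximal length over a $2$-group contains no element of $2\cdot G$. That is the idea you are missing.
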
 
\begin{proof}
By the transfer results that we recalled in Section \ref{transfer} we can assume $H=\bc(G)$. Let $(e_1, \dots, e_{r-1}, f)$ be a basis of $G$ with $\ord e_i = 2$ for $1 \le i \le r-1$ and $\ord f = 2n$.

First, suppose $n \ge 3$. We note that the sequence $U = f^{2n-3} (f+e_1)^3 (f+e_2) (-f+e_1+\dots + e_{r-1})  e_3 \dots e_{r-1}$ is a minimal zero-sum sequence of length $\Do^{\ast}(G)$: 
the assertion on the sum and length are direct, and to see that it is minimal we note that $f^{2n-3} (f+e_1)^3 (f+e_2)$ has no non-empty subsequence with sum $0$, so 
that a zero-sum subsequence $T$ of $U$ has to contain one and then each element of $(-f+e_1+\dots + e_{r-1})  e_3 \dots e_{r-1}$, 
which implies that $T$ contains   $(f+e_1) (f+e_2)$ and thus must equal $U$ to get sufficiently many elements containing $f$.

We consider $(-U)U$. Of course it has factorizations of length $2$ and $\Do^{\ast}(G)$. It remains to show that it has a factorization of length $3$. 
To see this note that $(-U)U$ is equal to $V_1V_2 V_3$ with $V_1 = f^{2n-3} (f+e_1) (f+e_2) (f+e_1+\dots + e_{r-1})  e_3 \dots e_{r-1}$, $V_2= (-f)^{2n-5} (-f+e_1)^3 (-f+e_2) (-f+e_1+\dots + e_{r-1})  e_3 \dots e_{r-1}$, and $V_3 = (f+e_1)^2 (-f)^2$, and $V_1, V_2, V_3$ are minimal zero-sum sequences. 

For $n=1$ it is established in Theorem \ref{thm_u2cycel2} that $3 \notin \uc_{\{2, \Do^{\ast}(G)\}}(H) $. 
It remains to consider $n=2$. Note that in this case the exponent of $G$ is $4$, so $G$ is a $2$-group and $\Do(G)=\Do^{\ast}(G)$ (see Section \ref{sec_prel}). 
Assume for a contradiction there is a zero-sum sequence  $B$ over $G$ such that $\{2,3, \Do^{\ast}(G)\} \subset \Lo(B)$. 
As $\Do(G)=\Do^{\ast}(G)$ and $\{2, \Do^{\ast}(G)\} \subset \Lo(B)$, 
it follows that $B= U(-U)$ where $U$ is a minimal zero-sum sequence of length $\Do(G)=\Do^{\ast}(G)$ (see the proof of Lemma \ref{lem_rho2}). 
Since $3 \in \Lo(U(-U))$ it follows that $U(-U)= V_1V_2V_3$ with minimal zero-sum sequences $V_1, V_2, V_3$ and further for each $1 \le i \le 3$ we have $V_i =S_i(-T_i) $  with  
$U = S_1S_2 S_3 =T_1T_2T_3$. We note that none of the $S_i$ and $T_i$ is the empty sequence.
We have $\s(S_i) = \s(T_i)$ for each $1 \le i \le 3$ and moreover $\sigma(S_1)+ \sigma(S_2) +\sigma(S_3)= 0$. 

We claim that at least one of the elements $\sigma(S_1), \sigma(S_2) ,\sigma(S_3)$ has order $2$. Since $U$ is a minimal zero-sum sequence all three elements are non-zero, as sums of proper and non-empty subsequences of $U$.  
Denoting by $G[2]$ the subgroup of $G$ of elements of order at most $2$, we have $G/G[2]$ is a group of order $2$ and as the images of $\sigma(S_1), \sigma(S_2) ,\sigma(S_3)$  in $G/G[2]$ form a zero-sum sequence not all of them can be the non-zero element in $G/G[2]$. Consequently, at least one of the elements has order at most $2$ and as it must be non-zero it has order $2$, establishing the claim. 

Without loss of generality, we assume that $\sigma(S_3)=e$ has order $2$. 
If $\gcd (S_3,T_3 ) = 1$, then $S_3T_3  \mid U$. As $\s(T_3S_3)= 2 e = 0$, it follows that $S_3T_3= U$, that is $T_3 = S_1 S_2$ and $S_3 = T_1 T_2$.
Yet then $S_3(-T_3) = (S_1(-T_1))(S_2 (-T_2))$ contradicting the fact that $S_3(-T_3)$ is a minimal zero-sum sequence. 

Thus,   $\gcd (S_3,T_3 ) \neq 1$. This implies, as $S_3(-T_3)$ is a minimal zero-sum sequence, that $|S_3|=|T_3|=1$ and $S_3 = T_3 = e$.
  
If $\gcd (S_1, T_1) = \gcd (S_2, T_2) = 1$, then $S_2 = T_1$ and $S_1 = T_2$. As $\s(S_1)= \s(T_1)$, it follows that $\sigma(S_1) = \sigma (S_2)$ and thus $e = -2 \sigma(S_1) \in 2 \cdot G$. However, this is not possible, as a minimal zero-sum sequence of maximal length over a $2$-group must not contain an element from $2\cdot G$ (see \cite[Proposition 5.5.8]{geroldingerhalterkochBOOK}).  
Alternatively, one can argue that the image of $Ue^{-1}$ in $G/\langle e \rangle \cong C_2^r$ has to be a minimal zero-sum sequence, which is not possible as its length exceeds the Davenport constant of $C_2^r$. 

Thus, we get that   $\gcd (S_2, T_2) \neq 1$. As above we get that $|S_2|=|T_2|=1$. Yet then $|S_1(-T_1)| = 2|U| -4 = 2 \Do(G) - 4  > \Do(G)$, a contradiction. 
\end{proof}

The preceding results yield the following corollary. 

\begin{corollary}
Let $H$ be a Krull monoid with class group 
$G$ of rank $r  \ge 3$ such that each class contains a prime divisor. The following conditions are equivalent:
\begin{itemize}
\item $G$ is neither an elementary $2$-group nor of the form $C_2^{r-1} \oplus C_4$. 
\item \(3 \in \uc_{\{2, \Do^{\ast}(G)\}}(H). \)
\end{itemize}
\end{corollary}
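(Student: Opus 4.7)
The plan is to deduce this corollary by a case analysis combining the three preceding results: Theorem \ref{thm_u2cycel2} (on cyclic and elementary $2$-groups), Theorem \ref{thm_U2} (covering the case $n_{r-1}\ge 3$), and Proposition \ref{prop_3u} (covering the case $G\cong C_2^{r-1}\oplus C_{2n}$). Writing $G \cong \bigoplus_{i=1}^{r} C_{n_i}$ with $1 < n_1 \mid \dots \mid n_r$, the driving dichotomy will be whether $n_{r-1} \ge 3$ or $n_{r-1} = 2$.

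For the implication that $G$ being neither an elementary $2$-group nor of the form $C_2^{r-1}\oplus C_4$ forces $3 \in \uc_{\{2, \Do^{\ast}(G)\}}(H)$, I would first treat the case $n_{r-1}\ge 3$: here Theorem \ref{thm_U2} yields $[2, \Do^{\ast}(G)] \subset \uc_{\{2, \Do^{\ast}(G)\}}(H)$, and a short computation from the definition $\Do^{\ast}(G)=1+\sum_{i=1}^r(n_i-1)$ shows $\Do^{\ast}(G) \ge 3$ under our hypotheses, so $3$ lies in this interval. In the complementary case $n_{r-1}=2$, the divisibility $n_1\mid\dots\mid n_{r-1}$ forces $n_1 = \dots = n_{r-1} = 2$ and $n_{r-1}\mid n_r$ forces $n_r = 2n$ for some $n \in \N$, so $G \cong C_2^{r-1}\oplus C_{2n}$. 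Excluding the elementary $2$-group ($n=1$) and $C_2^{r-1}\oplus C_4$ ($n=2$) leaves $n \ge 3$, and Proposition \ref{prop_3u} then supplies the required length-$3$ factorization.

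For the converse, if $G$ is an elementary $2$-group, Theorem \ref{thm_u2cycel2} gives $\uc_{\{2, \rho_2(H)\}}(H) = \{2, \rho_2(H)\}$. Since $G$ is a $p$-group, $\Do(G) = \Do^{\ast}(G) = r+1$, and by Lemma \ref{lem_rho2} one has $\rho_2(H) = \Do(G) = \Do^{\ast}(G)$; as $r\ge 3$, the integer $3$ is not in $\{2, \rho_2(H)\}$, so it is not in $\uc_{\{2, \Do^{\ast}(G)\}}(H)$. If $G \cong C_2^{r-1}\oplus C_4$, the $n=2$ direction of Proposition \ref{prop_3u} rules out $3$ directly.

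The argument contains essentially no obstacle beyond organizing the case split and verifying that the dichotomy $n_{r-1}\ge 3$ versus $n_{r-1}=2$ is exhaustive under $r\ge 3$ and that the hypotheses of Theorem \ref{thm_U2} and Proposition \ref{prop_3u} are met in each branch. All the real work has been concentrated in the three cited statements, notably in the delicate analysis of minimal zero-sum sequences of maximal length over $2$-groups carried out in the proof of Proposition \ref{prop_3u}.
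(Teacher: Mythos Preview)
Your proposal is correct and matches the paper's intended argument: the paper states only that ``the preceding results yield the following corollary,'' and your case split on $n_{r-1}\ge 3$ versus $n_{r-1}=2$, invoking Theorem~\ref{thm_U2}, Proposition~\ref{prop_3u}, and Theorem~\ref{thm_u2cycel2} respectively, is exactly how these results combine. One minor simplification: for the elementary $2$-group case in the converse you could also just cite the $n=1$ direction of Proposition~\ref{prop_3u} (which itself appeals to Theorem~\ref{thm_u2cycel2}), avoiding the separate invocation of Lemma~\ref{lem_rho2}.
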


We mention that this corollary allows to fill what we believe to be  a minor gap in the proof of \cite[Theorem 5.6]{baginskietal13}; 
it can be invoked there instead of  \cite[Theorem 4.2]{baginskietal13} (that is the result we recalled as Theorem \ref{thm_U2}).

\section{Distances}

In the preceding section we discussed how spread out sets of lengths can be, in the sense of comparing their extremal values. We now turn to the question how large distances there can be between adjacent elements of the sets of lengths. Moreover, considering distances also gives another measure for the complexity of a set of lengths; highly structured sets, such as arithmetic progressions, have few distinct distances even when the set itself might be large.   
 
\begin{definition} \
\begin{itemize}
\item Let  $A \subset \Z$. Then the  set of distances of $A$, denoted by $\Delta (A)$, is  the set of all differences between consecutive elements of $A$, formally, it is the set of all $d \in \N$ for which there exists $l \in
A$ such that $A \cap [l, l+d] = \{l, l+d\}$.  
\item For an atomic monoid $H$, we denote by 
\[
\Delta (H)  = \bigcup_{a \in H} \Delta \bigl( \mathsf L (a) \bigr) \subset \N
\]
 the  set of distances of $H$.
\end{itemize}
\end{definition}
It is sometimes common to denote, for $a \in H$, the set $\Delta(\Lo(a))$ by $\Delta(a)$. Since we only use it rarely, we do not use this abbreviation here.

If $H$ is a Krull monoid with finite  class group, then $ \Delta(H)$ is finite. 
More specifically and more generally, one has the following general bound (see, e.g., \cite[Theorem 3.4.11 and Theorem 1.6.3]{geroldingerhalterkochBOOK}.

\begin{lemma}
Let $H$ be a Krull monoid and let $G_P$ denote the set of classes containing prime divisors.
Then $\sup \Delta(H) \le \Do(G_P) -2$. 
\end{lemma}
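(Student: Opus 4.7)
The plan is to reduce to zero-sum sequences and then induct on $|B|$. Since $\beta \colon H \to \bc(G_P)$ is a transfer homomorphism (Section \ref{transfer}), one has $\Lo(a) = \Lo(\beta(a))$ for all $a \in H$, so $\lc(H) = \lc(\bc(G_P))$ and $\Delta(H) = \Delta(\bc(G_P))$. It therefore suffices to show that every $d \in \Delta(\Lo(B))$ with $B \in \bc(G_P)$ satisfies $d \le \Do(G_P) - 2$, and I would proceed by strong induction on $|B|$.

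Fix consecutive lengths $k < k+d$ in $\Lo(B)$ and factorizations $B = U_1 \cdots U_k = V_1 \cdots V_{k+d}$. If some $U_i$ equals some $V_j$, cancelling it produces $B/U_i \in \bc(G_P)$ of strictly smaller size with the same gap $d$, and the inductive hypothesis gives the bound. So I assume no common atom, which forces $k \ge 2$ (otherwise $B = U_1$ would be a single atom and $\Lo(B) = \{1\}$). Now pick $U_1$ and view it as a subsequence of $V_1 \cdots V_{k+d}$; let $t$ be the number of factors $V_j$ that share at least one element with $U_1$. Since $U_1$ is a minimal zero-sum sequence distinct from every $V_j$, it must overlap at least two of them, so $t \ge 2$, and I reindex so these are $V_1, \ldots, V_t$. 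In the boundary case $t = k+d$, $U_1$ overlaps every $V_j$, so $|U_1| \ge k+d$; together with $|U_1| \le \Do(G_P)$ and $k \ge 2$ this gives $d \le \Do(G_P) - k \le \Do(G_P) - 2$ directly.

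Suppose now $t < k+d$ and, for contradiction, $d \ge \Do(G_P) - 1$. Set $D = V_1 \cdots V_t / U_1 \in \bc(G_P) \setminus \{1\}$; note $|D|, |V_1 \cdots V_t| < |B|$. For any $\tau \in \Lo(D)$, the identity $V_1 \cdots V_t = U_1 \cdot D$ (with $D$ decomposed into its atoms) yields a factorization of $B$ of length $k+d-t+1+\tau$, and this lies in $(k, k+d)$ precisely when $\tau \in [t-d, t-2]$. I split on where $\Lo(D)$ sits. If it meets $[t-d, t-2]$, one obtains a $B$-length in $(k, k+d)$, contradiction. If $\Lo(D)$ has elements in both $[1, t-d-1]$ and $[t-1, \infty)$, then the forced consecutive gap of $\Lo(D)$ has size $\ge d \ge \Do(G_P)-1$, contradicting $\Delta(\Lo(D)) \subseteq [1, \Do(G_P)-2]$ from the inductive hypothesis on $D$. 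If $\Lo(D) \subseteq [1, t-d-1]$, then $V_1 \cdots V_t$ has factorizations of length $t$ and of some length $\le t-d$; the inductive bound on the proper divisor $V_1 \cdots V_t$ then yields a length in $[t-\Do(G_P)+2,\, t-1]$, which under $d \ge \Do(G_P)-1$ sits inside $(t-d, t)$ and lifts to the required intermediate $B$-length.

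The main obstacle is the remaining sub-case $\Lo(D) \subseteq [t-1, \infty)$, where the $U_1$-swap produces only $B$-lengths $\ge k+d$ and so gives no immediate contradiction. I expect to handle it by running the analogous analysis on the $V$-side, starting from $V_1$ and its interaction with the $U_i$; if both sides simultaneously end up in their respective stuck sub-cases, the combined constraints on the sets $\Lo(C_j)$ and $\Lo(D_i)$ -- together with the atom-length bound $|U_i|, |V_j| \le \Do(G_P)$ -- produce a global inequality forcing $k+d \le \Do(G_P)$, which once more gives $d \le \Do(G_P) - k \le \Do(G_P) - 2$.
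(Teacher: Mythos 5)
The paper offers no proof of this lemma; it refers to \cite[Theorems 1.6.3 and 3.4.11]{geroldingerhalterkochBOOK}, where the inequality is deduced from the chain $2+\sup\Delta(H)\le \mathsf{c}(H)\le \Do(G_P)$ involving the catenary degree. So your argument must stand on its own, and its preparatory steps do: the transfer to $\bc(G_P)$, the cancellation of common atoms (add the one-line remark that an intermediate length of $BU_i^{-1}$ in $(k-1,k+d-1)$ would lift to an intermediate length of $B$, so the gap $d$ really survives the cancellation), the bound $k\ge 2$, the inequality $t\ge 2$, and the boundary case $t=k+d$ --- the last provided $t$ is counted via a fixed decomposition $U_1=\prod_{j}W_j$ with $1\ne W_j\mid V_j$, so that $|U_1|\ge t$; with the mere ``support overlap'' reading the inference $|U_1|\ge k+d$ can fail.

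The case analysis, however, does not close, and the sub-case you flag as the ``main obstacle'' is not a corner case but the heart of the matter. With the decomposition reading one has $t\le|U_1|\le\Do(G_P)$, so under your reductio hypothesis $d\ge\Do(G_P)-1$ the interval $[1,t-d-1]$ is empty: your second and third alternatives are vacuous, and the whole analysis collapses to the dichotomy $\min\Lo(D)\le t-2$ (which gives the contradiction) versus $\min\Lo(D)\ge t-1$ (the stuck case). The stuck case is unavoidable for small $t$: when $t=2$ the favourable alternative would force $0\in\Lo(D)$, i.e.\ $U_1=V_1V_2$, which is impossible, so every pivot whose letters come from only two of the $V_j$ lands in the stuck case. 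The proposed repair --- running the argument from the $V$-side and combining --- is not yet an argument: the $V$-side boundary case only yields $|V_1|\ge k$, which bounds $k$ rather than $d$, and no mechanism is given that would force $k+d\le\Do(G_P)$ when both sides are simultaneously stuck. What is really needed is a proof that for a suitable choice of pivot and covering set the residual sequence $D=V_1\cdots V_t U_1^{-1}$ admits a factorization of length at most $t-2$; this is essentially the content of the bound $\mathsf{c}(\bc(G_P))\le\Do(G_P)$, whose known proofs require a more delicate induction than a single atom swap. As it stands the proof is incomplete in an essential way.
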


In case the class group is infinite, $\Delta(H)$ can be infinite, too. In fact, if  each class contains a prime divisor then $\Delta(H)= \N$. (This is a direct consequence of Theorem \ref{thm_kainrath}, yet it is a much simpler result; indeed, we give a partial proof below.)  

The example recalled in Lemma \ref{lem_23} shows that for a Krull monoid where each class contains a prime divisor we always have $1 \in \Delta(H)$. 
Moreover, Geroldinger and Yuan \cite{geroldingeryuan12} showed that for these monoids $\Delta(H)$ is an interval. 

\begin{theorem}
Let $H$ be a Krull monoid with finite class group such that each class contains a prime divisor.
Then $\Delta(H)= [1, \max \Delta(H)]$. 
\end{theorem}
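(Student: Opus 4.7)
The plan is to apply the transfer homomorphism from Section \ref{transfer} to reduce the problem to the monoid $\bc(G)$ and then show that $\Delta(\bc(G))$ is an interval. The base case $1 \in \Delta(\bc(G))$ is immediate from Lemma \ref{lem_23}, which exhibits $\{2,3\}$ as a set of lengths. Since the preceding lemma gives the finite upper bound $\max \Delta(\bc(G)) \le \Do(G) - 2$, it suffices to establish the downward induction step: for every $d \in \Delta(\bc(G))$ with $d \ge 2$, also $d-1 \in \Delta(\bc(G))$.

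To carry out the induction step, I would start with a witness $B \in \bc(G)$ for $d$, that is, a zero-sum sequence with factorizations $B = U_1 \cdots U_k$ and $B = V_1 \cdots V_{k+d}$ such that $\Lo(B) \cap [k, k+d] = \{k, k+d\}$. The strategy is to multiply $B$ by an appropriately chosen auxiliary zero-sum sequence $C$ whose set of lengths already contains a short shift, so as to convert the pair $\{k, k+d\}$ in $\Lo(B)$ into a configuration in $\Lo(BC)$ whose maximal internal distance is $d-1$. A natural candidate is $C$ with $\Lo(C) = \{2,3\}$, furnished by Lemma \ref{lem_23}. By Lemma \ref{lem_add}, $\Lo(BC)$ then contains $\{k+2,k+3,k+d+2,k+d+3\}$, so the would-be gap between $k+3$ and $k+d+2$ in $\Lo(BC)$ is at most $d-1$. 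The hypothesis that every class contains a prime divisor gives the flexibility required to realise $C$, as well as any other auxiliary sequences one might need, inside $\bc(G)$.

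The main obstacle is controlling the remainder of $\Lo(BC)$: extra factorizations of the product that do not arise simply by concatenating a factorization of $B$ with one of $C$ may introduce further lengths strictly between $k+3$ and $k+d+2$, potentially collapsing the prospective gap to some value smaller than $d-1$. The resolution, which is the technical heart of the theorem, is a structural analysis of the atoms involved that allows one to refine the choice of $B$ and $C$ so that either no such interference occurs, or the interference can itself be exploited to produce a direct witness for $d-1$. The complete argument, which relies on a detailed analysis of minimal zero-sum sequences in $\bc(G)$, is carried out in \cite{geroldingeryuan12}.
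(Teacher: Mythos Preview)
The paper does not prove this theorem; it states the result and attributes it to Geroldinger and Yuan \cite{geroldingeryuan12} without reproducing the argument. Your proposal is, in the end, of the same kind: after setting up a plausible downward-induction framework and correctly isolating the obstacle (extra lengths in $\Lo(BC)$ beyond those coming from $\Lo(B)+\Lo(C)$), you defer the actual resolution to \cite{geroldingeryuan12}. So there is nothing in the paper to compare your argument against, and your write-up is a motivational sketch rather than a proof.

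One remark on the sketch itself. The step ``multiply by $C$ with $\Lo(C)=\{2,3\}$ and hope the gap shrinks by exactly one'' is not the line taken in \cite{geroldingeryuan12}, and for good reason: the obstacle you name is genuine and cannot in general be circumvented by a clever choice of $B$ and $C$ alone. The actual proof does not run as a downward induction on $d$ in this way; it passes through an analysis of factorizations (not just lengths) and uses a chain-of-factorizations argument to control which distances can occur. If you intend to present this as more than a pointer to the literature, the inductive scheme you describe would need substantial reworking.
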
 

Thus, in this important case the problem of determining $\Delta(H)$ is reduced to the problem of determining the maximum of this set. Before we discuss results towards this goal, we recall some well-known  constructions to get some rough insight into which size of $\max \Delta(H)$ one might expect (for further details see, e.g., \cite[Lemma 6.4.1]{geroldingerhalterkochBOOK}).

\begin{lemma}
Let $G = C_{n_1} \oplus \ldots \oplus C_{n_r}$ with $|G| \ge 3$ and $1 < n_1 \mid \dots \mid n_r$. 
Then 
\[
[1, n_r -2] \cup [1, -1+ \sum_{i=1}^{r}\left \lfloor\frac{n_i}{2}\right \rfloor ] \subset \Delta(G) 
\]
\end{lemma}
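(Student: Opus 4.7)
The plan is to prove the two interval inclusions separately by exhibiting, for each $d$, an element of $\bc(G)$ whose set of lengths has $d$ as a difference between consecutive elements. Throughout, fix a basis $(e_1,\dots,e_r)$ of $G$ with $\ord(e_i)=n_i$, and write $g=e_r$.

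For the first interval $d\in[1,n_r-2]$, I would work inside the cyclic subgroup $\langle g\rangle\cong C_{n_r}$ and consider
\[
B_d \;=\; g^{d+1}\cdot(-g)^{n_r}\cdot\bigl((n_r-d-1)g\bigr)\in\bc(G).
\]
One factorisation is $B_d=(-g)^{n_r}\cdot\bigl(g^{d+1}((n_r-d-1)g)\bigr)$ of length $2$, and another is $B_d=(g(-g))^{d+1}\cdot\bigl((-g)^{n_r-d-1}((n_r-d-1)g)\bigr)$ of length $d+2$. To conclude $d\in\Delta(G)$ the crux is to check $\Lo(B_d)=\{2,d+2\}$. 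Since $B_d$ is supported in $\langle g\rangle$, its atoms lie in $\bc(\langle g\rangle)$, so I would parametrise candidate atoms as $g^{\alpha}(-g)^{\beta}((n_r-d-1)g)^{\gamma}$ with $\alpha\le d+1$, $\beta\le n_r$, $\gamma\in\{0,1\}$ and apply the zero-sum condition $\alpha-\beta+(n_r-d-1)\gamma\equiv0\pmod{n_r}$; this shows the only minimal zero-sum sequences dividing $B_d$ are $g(-g)$, $(-g)^{n_r}$, $g^{d+1}((n_r-d-1)g)$, and $(-g)^{n_r-d-1}((n_r-d-1)g)$. The linear system matching multiplicities of each of the four distinct elements of $B_d$ against these four atoms then forces exactly two factorisation lengths.

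For the second interval $d\in[1,-1+\sum_i\lfloor n_i/2\rfloor]$, I would choose non-negative integers $a_i\le\lfloor n_i/2\rfloor$ with $\sum a_i=d+1$ (possible by the upper bound on $d$), set $x=-\sum a_ie_i$, and put
\[
U_d \;=\; \prod_{i=1}^r e_i^{a_i}\cdot x.
\]
The independence of the $e_i$ together with $a_i<n_i$ makes $U_d$ a minimal zero-sum sequence of length $d+2$. The sequence $U_d\cdot(-U_d)$ admits the two obvious factorisations $U_d\cdot(-U_d)$ (length $2$) and $\prod_i(e_i(-e_i))^{a_i}\cdot(x(-x))$ (length $d+2$). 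To show $\Lo(U_d(-U_d))=\{2,d+2\}$ I would enumerate atoms of $\bc(G)$ dividing $U_d(-U_d)$: writing a candidate atom as $x^p(-x)^q\prod e_i^{b_i}(-e_i)^{c_i}$, the zero-sum condition in each coordinate becomes $n_i\mid(b_i-c_i)-(p-q)a_i$. The hypothesis $a_i\le\lfloor n_i/2\rfloor$ keeps $|b_i-c_i|$ strictly below $n_i$, which forces the only atoms to be $e_i(-e_i)$, $x(-x)$, $U_d$, $-U_d$ (plus some ``mirror'' variants in a degenerate case). The resulting linear system then produces only the two lengths $2$ and $d+2$.

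The main obstacle is the atom enumeration for the second construction in the degenerate case, when every $a_i$ equals $n_i/2$ (which forces every $n_i$ to be even and $x$ to have order~$2$): here $U_d(-U_d)$ picks up an extra family of atoms of the form $x\cdot\prod_{i\in T}e_i^{a_i}\prod_{i\notin T}(-e_i)^{a_i}$ indexed by subsets $T$, and one must verify via a symmetry argument on the atom-count system (replacing $e_i$ by $-e_i$ on a chosen subset of coordinates) that these additional atoms nevertheless contribute only the lengths $2$ and $d+2$. Once the two length-set computations are in place, $d\in\Delta(\Lo(B_d))$ and $d\in\Delta(\Lo(U_d(-U_d)))$ give $[1,n_r-2]\cup[1,-1+\sum\lfloor n_i/2\rfloor]\subset\Delta(G)$.
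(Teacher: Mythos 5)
Your proposal is correct and takes essentially the same route as the paper: for the second interval the paper uses exactly your element $U(-U)$ with $U=(-e_0)\prod_{i=1}^r e_i^{k_i}$ and $e_0=\sum k_ie_i$, $2k_i\le n_i$, and for the first interval your $B_d$ is (after substituting $e=-g$, $a=d+2$) the paper's $e^{n_r}((a-1)e)(-e)^{a-1}$. The only difference is that the paper simply asserts the two sets of lengths (citing the literature for details), whereas you carry out the atom enumeration and multiplicity count explicitly, including the degenerate case $a_i=n_i/2$.
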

\begin{proof}
Let $e_1, \ldots, e_r \in G$ be independent with $\ord e_ i = n_i $ for each $i \in [1,r]$. 
Let $e_0 = k_1 e_1 + \ldots + k_r e_r$, where  $k_i \in \N_0$ and $2k_i \le \ord e_i$ for all $i \in [1, r]$. 
For 
\[
U = (-e_0) \prod_{i=1}^r e_i^{k_i},
\]
we have  $\mathsf L ((-U) U ) = \{2, k_1+ \dots + k_r + 1\}$. 
This yields a distance of  $-1 + k_1+ \dots + k_r$ (except if $k_1+ \dots + k_r= 1$). Since  
$k_1+ \dots + k_r$ can attain any value in $ [ 1, \sum_{i=1}^{r}\lfloor\frac{n_i}{2}\rfloor]$, 
we get  $ [ 1, -1 + \sum_{i=1}^{r}\lfloor\frac{n_i}{2}\rfloor] \subset \Delta(G)$ . 

Let $e \in G$ be non-zero. Then $ \Lo(e^n ((a-1)e)(-e)^{a-1}) = \{2, a\}$ for $a \in [2, \ord (e) ]$. 
This yields a distance of $a-1$. As there is an element of order $n_r$, we get $[1, n_r -2]$. 
\end{proof}

\begin{remark}
Since an infinite abelian torsion  group contains elements of arbitrarily large order or 
an infinite independent set, the above constructions show $\Delta(G)= \N$ for infinite torsion groups. 
\end{remark}

No element in $\Delta(G)$ larger than the ones given above is known.
The bound   $ \max \Delta(G )  \le \Do(G)-2$ shows that  for $G$ cyclic or an elementary $2$-group, indeed, there can be no larger element. Thus, one has the following result (see \cite[Theorem 6.4.7]{geroldingerhalterkochBOOK}). 

\begin{theorem}
\label{thm_dist2c}
For  $r \ge 2$ and $n \ge 3$ one has $\Delta(C_2^r) = [1, r-1]$ and $\Delta(C_n) = [1, n-2]$.  
\end{theorem}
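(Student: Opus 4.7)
The plan is to sandwich $\Delta(C_n)$ and $\Delta(C_2^r)$ between matching upper and lower bounds that are already packaged in the two lemmas immediately preceding the theorem, together with the fact recalled in Section~\ref{sec_prel} that $\Do(G) = \Do^{\ast}(G)$ for cyclic groups (which have rank one) and for elementary $2$-groups (which are $2$-groups).

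For the upper bound I would apply the lemma $\sup \Delta(H) \le \Do(G_P) - 2$ to $H = \bc(G)$, whose set of distances is by definition $\Delta(G)$ (with $G_P = G$). Since $\Do(C_n) = \Do^{\ast}(C_n) = n$ and $\Do(C_2^r) = \Do^{\ast}(C_2^r) = r + 1$, this yields at once $\max \Delta(C_n) \le n - 2$ and $\max \Delta(C_2^r) \le r - 1$.

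For the lower bound I would specialise the constructive lemma just above the theorem. In the cyclic case, taking $e$ of order $n$ and letting $a$ range over $[3, n]$, the second construction of that lemma produces elements whose set of lengths is $\{2, a\}$, contributing distance $a - 2$; as $a$ varies this fills out all of $[1, n - 2]$. For $G = C_2^r$, I would use the first construction with $(k_1, \dots, k_r) \in \{0,1\}^r$ and $\sum_i k_i \ge 2$: this realises every value of $-1 + \sum_i k_i$ in $[1, r - 1]$, giving $[1, r - 1] \subset \Delta(C_2^r)$. (The inclusion is in fact directly displayed in the preceding lemma; I spell out the mechanism only to make visible that every intermediate value is attained.)

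Combining the two directions yields $\Delta(C_n) = [1, n-2]$ and $\Delta(C_2^r) = [1, r-1]$, as claimed. I do not foresee a real obstacle: the theorem is essentially a collation of the two preceding lemmas together with the explicit Davenport-constant values $\Do(C_n) = n$ and $\Do(C_2^r) = r + 1$. The only mild point to verify is that the constructions realise \emph{every} integer of the purported interval, and not merely its extreme values, which is ensured by letting the free parameter ($a$ in the cyclic case, $\sum_i k_i$ in the elementary $2$-case) sweep through its full admissible range.
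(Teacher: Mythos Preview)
Your proposal is correct and matches the paper's approach exactly: the paper also derives the theorem by combining the constructive lower-bound lemma with the upper bound $\max \Delta(G) \le \Do(G) - 2$, using the known values $\Do(C_n)=n$ and $\Do(C_2^r)=r+1$ (both instances of $\Do(G)=\Do^{\ast}(G)$). The paper gives no further detail beyond this and a citation to \cite[Theorem~6.4.7]{geroldingerhalterkochBOOK}.
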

These groups are in fact the only ones for which $ \max \Delta(G )  = \Do(G)-2$. A characterization of groups for which $ \max \Delta(G )  = \Do(G)-3$ was recently given by Geroldinger and Zhong \cite{geroldingerzhongCat}.

However, in general the following problem is wide open. 

\begin{problem}
Let  $G \cong C_{n_1} \oplus \dots \oplus C_{n_r}$ with $|G| \ge 3$ and $1 < n_1 \mid \dots \mid n_r$.
Is \[\max \Delta(G) = \max \left  \{n_r -2 ,  -1+ \sum_{i=1}^{r}\left \lfloor\frac{n_i}{2}\right \rfloor \right \}\ ?\] 
\end{problem}

We recall results that give upper-bounds on  $\max \Delta (H)$. 
It turned out that the following quantity is a useful tool to this end.
It was introduced in \cite{WAScat}. The problem of determining $\max \Delta (H)$ and problems of distances more generally are often studied in combination or even via a notion called catenary degree. 
The catenary degree is a notion of factorization theory that does not only take the length of factorizations into account, which is why we do not discuss it here.   

\begin{definition} Let $H$ be an atomic monoid. 
Let
\[
\daleth (H) = \sup \big\{ \min \bigl( \Lo (uv) \setminus \{2\} \bigr) \colon u,v \in \mathcal A(H) \big\},
\]
with the convention that $\min \emptyset = \sup \emptyset =0$.
\end{definition}
We point out that we again study sets of lengths of a product of two irreducible elements; other aspects of this problem were discussed in the preceding section. The following lemma is essentially a direct consequence of the definition. 

\begin{lemma} Let $H$ be an atomic monoid. Then
$\daleth(H) \le  2 + \sup \Delta(H)$.      
\end{lemma}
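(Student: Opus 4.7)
The plan is to estimate, for each fixed pair of atoms $u,v \in \ac(H)$, the quantity $\min(\Lo(uv) \setminus \{2\})$ by $2 + \sup \Delta(H)$, and then take the supremum over all such pairs.

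First I would pin down the structure of $\Lo(uv)$. Since $u$ and $v$ are irreducible, hence non-invertible, the product $uv$ is non-invertible, so $0 \notin \Lo(uv)$; and $uv$ is not itself irreducible (it factors as $u \cdot v$ with neither factor invertible), so $1 \notin \Lo(uv)$. On the other hand, $u \cdot v$ is a factorization into two atoms, so $2 \in \Lo(uv)$. Hence $\Lo(uv) \subset \N_{\ge 2}$ and $2 \in \Lo(uv)$.

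Next I would split into two cases. If $\Lo(uv) = \{2\}$, then $\min(\Lo(uv) \setminus \{2\}) = \min \emptyset = 0$ by the stated convention, so the desired bound is trivial (even if $\Delta(H)$ is empty, since $\sup \emptyset = 0$ as well). Otherwise set $m = \min(\Lo(uv) \setminus \{2\})$; from $\Lo(uv) \subset \N_{\ge 2}$ we have $m \ge 3$. By the minimality of $m$, no integer of the open interval $(2,m)$ lies in $\Lo(uv)$, so $\Lo(uv) \cap [2,m] = \{2,m\}$. By definition of the set of distances, this exhibits $m-2$ as an element of $\Delta(\Lo(uv)) \subset \Delta(H)$, giving $m - 2 \le \sup \Delta(H)$, i.e.\ $m \le 2 + \sup \Delta(H)$.

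Taking the supremum over all $u,v \in \ac(H)$ yields $\daleth(H) \le 2 + \sup \Delta(H)$. There is no substantive obstacle; the argument is essentially definition chasing, the only mild subtlety being to isolate the degenerate case $\Lo(uv)=\{2\}$ and absorb it via the convention $\min \emptyset = 0$ before reading off the distance $m-2$ from the gap immediately above $2$ in $\Lo(uv)$.
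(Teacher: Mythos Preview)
Your proof is correct and is exactly the definition-chase the paper has in mind; indeed the paper does not spell out a proof at all, merely remarking that the lemma is ``essentially a direct consequence of the definition.'' Your argument makes that consequence explicit, including the minor care needed for the degenerate case $\Lo(uv)=\{2\}$.
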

While equality does not always hold (for an example see below), it can be shown to hold for Krull monoids under certain assumptions on the class group. Informally, this then means that the largest possible distance is already attained in the sets of lengths of the product of two irreducible elements, which simplifies the task of actually determining this distance.    

The following result is a special case of \cite[Corollary 4.1]{WAScat}. 

\begin{theorem} 
Let $H$ be a Krull monoid with class group $G \cong  C_{n_1}
\oplus \ldots \oplus C_{n_r}$ where $1<n_1|\ldots |n_r$ and $|G|\ge 3$ such that each class contains a prime divisors. If 
 \[
\left\lfloor\frac{1}{2}\mathsf D (G)+1 \right \rfloor \le
           \max \left\{n_r,\,1+\sum_{i=1}^{r}\left \lfloor\frac{n_i}{2} \right \rfloor \right\}.
\]
Then
\(
\daleth (H) = 2 + \max \Delta(H).
\)
\end{theorem}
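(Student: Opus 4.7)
The inequality $\daleth(H) \le 2 + \sup \Delta(H)$ is already established by the preceding lemma; the task is to prove the reverse inequality $\daleth(H) \ge 2 + \max \Delta(H)$. Since every class of $G$ contains a prime divisor, the block homomorphism from Section \ref{transfer} allows me to work throughout in $\bc(G)$, as this changes neither side.

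\textbf{Step 1: an explicit lower bound on $\daleth(H)$.} I observe that each construction used in the proof of the preceding lemma on $\Delta(G)$ in fact exhibits a product of exactly two minimal zero-sum sequences: the sequence $(-U)U$ is visibly such a product, and $e^n \cdot \bigl((a-1)e \cdot (-e)^{a-1}\bigr)$ has both factors minimal zero-sum for $a \in [2, n]$. Moreover, the set of lengths of each such product is exactly $\{2, d+2\}$ for the corresponding distance $d$ realised by that construction. Hence
\[
\daleth(H) \;\ge\; \max\left\{n_r,\; 1 + \sum_{i=1}^{r} \lfloor n_i/2 \rfloor\right\},
\]
and the hypothesis translates this into $\daleth(H) \ge \lfloor \Do(G)/2 + 1 \rfloor$, or equivalently $2\daleth(H) \ge \Do(G) + 1$.

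\textbf{Step 2: realising $\max \Delta(H)$ via a pair of atoms.} Put $d := \max \Delta(H)$ and pick $B \in \bc(G)$ of minimal length such that $\Lo(B) \supset \{l, l+d\}$ while $\Lo(B) \cap (l, l+d) = \emptyset$; minimality ensures that fixed factorizations $B = U_1 \cdots U_l = V_1 \cdots V_{l+d}$ are coprime in $\Zo(\bc(G))$. If indices $i, j$ can be found such that $U_i U_j$ admits a factorization of length $t > 2$, then substituting this into $U_1 \cdots U_l$ produces a factorization of $B$ of length $l - 2 + t$; the gap condition forces $l - 2 + t \ge l + d$, i.e., $t \ge d + 2$. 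Consequently
\[
\daleth(H) \;\ge\; \min\bigl(\Lo(U_i U_j) \setminus \{2\}\bigr) \;\ge\; d + 2 \;=\; 2 + \max \Delta(H),
\]
which, combined with the preceding lemma, yields the stated equality.

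\textbf{Main obstacle.} The delicate point is producing the pair $(U_i, U_j)$ with a nontrivial longer factorization. This is a catenary-degree style statement: any two coprime factorizations of $B$ must be connected by local swaps in the factorization monoid, and the hypothesis $\lfloor \Do(G)/2 + 1 \rfloor \le \max\{n_r, 1 + \sum \lfloor n_i/2 \rfloor\}$ is precisely what is needed to force these swaps to act at the level of atom pairs rather than larger blocks. Combined with the lower bound on $\daleth(H)$ from Step 1, the hypothesis bounds the relevant catenary degree of $\bc(G)$ by $\daleth(H)$ itself, and the substitution argument in Step 2 then applies. Verifying this last point rigorously requires the swap analysis developed in \cite{WAScat}, and is the technical heart of the proof; without the hypothesis, the pair $(U_i, U_j)$ need not exist and the maximal distance may only be witnessed inside larger blocks of atoms.
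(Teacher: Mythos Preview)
The paper does not give a proof of this theorem; it records it as a special case of \cite[Corollary~4.1]{WAScat}. Your plan matches the route taken there. Step~1 is correct: both constructions in the lemma on $\Delta(G)$ really are products of two atoms with sets of lengths exactly $\{2,\,1+\sum k_i\}$ and $\{2,a\}$, so $\daleth(H)\ge\max\{n_r,\,1+\sum_{i}\lfloor n_i/2\rfloor\}$, and the hypothesis then yields $2\,\daleth(H)\ge\Do(G)+1$. The coprimality claim at the start of Step~2 also follows from minimality of $|B|$ in the way you indicate.

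What you flag as the ``Main obstacle'' is indeed the entire content of the result. The substitution argument of Step~2 is sound \emph{once} a pair $(U_i,U_j)$ with $|\Lo(U_iU_j)|>1$ is produced, and Example~\ref{ex_n-r-1} shows that such a pair need not exist in the absence of the hypothesis. Your justification---that the hypothesis together with Step~1 forces the catenary degree of $\bc(G)$ to be at most $\daleth(H)$---is precisely what \cite{WAScat} establishes; and note that once $c(\bc(G))\le\daleth(H)$ is in hand, the standard inequality $2+\sup\Delta(H)\le c(H)$ already finishes, so the substitution argument of Step~2 becomes a detour rather than an independent ingredient. In short, your outline is accurate and sits at the same level of detail as the paper's own treatment: both locate the technical core and defer it to \cite{WAScat}.
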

We discuss the technical condition. Since 
\[
1+\sum_{i=1}^{r} \Big\lfloor\frac{n_i}{2} \Big\rfloor =
\frac{1 + \mathsf{r}_2(G)+ \Do^{\ast} (G)}{2} ,
\]
where $\mathsf{r}_2(G)$ denotes the number of even $n_i$s,
it follows that if $\Do(G)=\Do^{\ast}(G)$, then
\[
\Big\lfloor\frac{1}{2}\mathsf D(G)+1 \Big\rfloor
\le 1+\sum_{i=1}^{r}\Big\lfloor\frac{n_i}{2} \Big\rfloor.
\]

We give an example where $\daleth (H) < 2 + \max \Delta (H)$. 
For details of the example see  \cite[Proposition 4.1.2]{geroldingerhalterkochBOOK}.

\begin{example}
\label{ex_n-r-1}
Let $G$ be an abelian group and  $r, \,n \in \N_{\ge 3}$ with $n \ne r+1$.
Let  $e_1,\dots , e_r \in G$ be independent elements with  $\ord e_i = n$
for all $i \in [1,r]$. We set $e_0 = -(e_1 + \ldots + e_r)$ and $G_0 =
\{e_0,e_1 \dots , e_r\}$. Then $\Delta (\bc(G_0)) = \{|n-r-1|\}$ yet $\daleth(\bc(G_0))= 0$. 
\end{example}
To see this note that the only minimal zero-sum sequences are $e_i^n$ for $i \in [0,r]$ and 
$W = \prod_{i=0}^r e_i$. To have a non-trivial relation, we at least need to have $W^n$, 
which factors also as $\prod_{i=0}^r e_i^n$.  

We continue with a bound on $\daleth(H)$; this is a special case of \cite[Theorem 5.1]{WAScat}. 

\begin{theorem}
Let $H$ be a Krull monoid with finite class group $G$ such that  each class contains a prime divisor. If $\exp (G) = n$ and $\mathsf r (G) = r$, then
\[
\daleth (H) \le    \max\left\{n,\quad \frac13\left(2\mathsf
D (G)+\frac{1}{2} rn+ 2^r\right)\right\}.
\]
\end{theorem}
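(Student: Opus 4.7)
The plan is to invoke the transfer homomorphism recalled in Section \ref{transfer} and work in $\bc(G)$; it then suffices to bound, for any pair of atoms $U, V \in \ac(G)$ with $\Lo(UV)\ne\{2\}$, the minimum length of a factorization $UV = W_1\cdots W_k$ with $k\ge 3$.

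The starting point is a simple length count: every atom $W_j$ dividing $UV$ has $|W_j|\ge 2$, and an atom of length exactly $2$ is necessarily of the form $g(-g)$ for some $g\in G$. Writing $t$ for the number of such \emph{short} atoms in a chosen factorization $UV=W_1\cdots W_k$ of length $k\ge 3$, summing lengths and using $|UV|\le 2\Do(G)$ yields
\[
2t + 3(k-t) \le 2\Do(G), \qquad \text{hence} \qquad k \le \frac{2\Do(G)+t}{3}.
\]
The task therefore reduces to showing that $t$ may be chosen at most $\tfrac{1}{2}rn + 2^r$, or else that the alternative bound $k\le n$ applies.

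To bound $t$, I would fix a basis $(e_1,\ldots,e_r)$ of $G$ with orders $n_1\mid\cdots\mid n_r=n$, and classify each short atom $g(-g)$ according to two invariants of the group element $g$: the coordinate vector of $g$ modulo the involution $g\leftrightarrow -g$, which takes at most $1+\sum_{i=1}^r\lfloor n_i/2\rfloor \le 1 + \tfrac{1}{2} rn$ values, and a $\{0,1\}^r$-valued discriminator recording which coordinates of $g$ exceed the half-range threshold $n_i/2$, contributing a further factor of $2^r$. If two short atoms $g(-g)$ and $g'(-g')$ share both invariants, the resulting arithmetic relation between $g$ and $g'$ permits replacing the pair by a single atom of length at least $3$ together with a shorter residual zero-sum sequence, yielding a factorization of $UV$ of length strictly less than $k$ and contradicting its minimality. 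In the degenerate case where such a replacement is obstructed, one instead forces $|U|+|V|\le 2n$, so that the crude bound $k\le|UV|/2\le n$ applies and accounts for the $n$-term in the maximum.

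The main obstacle is the pigeonhole step bounding $t$: one has to arrange the classification so that the two layers genuinely refine each other without double-counting, and one must verify that each coincidence of invariants truly produces a legal reshuffling (that is, one respecting the multiplicities of group elements in $U$ and $V$ and not merely reconstructing the trivial factorization $U\cdot V$). Executing this cleanly is expected to rely on the definition of $\Do(G)$, together with the minimality of $U$ and $V$ as zero-sum sequences, which restricts how often any single element of $G$ can appear in $UV$.
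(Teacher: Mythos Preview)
The paper itself does not prove this result; it is stated as a special case of \cite[Theorem 5.1]{WAScat} with no argument given, so there is nothing in the paper to compare against and your outline must stand on its own. The opening moves are sound: the transfer to $\bc(G)$ and the length count $3(k-t)+2t\le|UV|\le 2\Do(G)$, giving $k\le(2\Do(G)+t)/3$, are exactly the right start. The difficulty is entirely in the bound on $t$, and here there are genuine gaps.

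Two concrete problems. First, your invariant counts do not give what you claim. The number of orbits of $G\setminus\{0\}$ under $g\leftrightarrow -g$ is $(|G|+|G[2]|)/2-1$, which for $r\ge 2$ is of order $|G|$, not $1+\sum_i\lfloor n_i/2\rfloor$; and even granting your numbers, two independent classifying invariants yield a \emph{multiplicative} bound on the number of types, of order $\tfrac{1}{2}rn\cdot 2^r$, not the additive $\tfrac{1}{2}rn+2^r$ the statement requires. An additive bound would need a genuine case split (for instance treating short atoms $g(-g)$ with $2g=0$ separately from those with $2g\ne 0$), which you do not indicate. Second, the replacement move fails outright: the product $g(-g)\cdot g'(-g')$ of two short atoms always contains $g(-g)$ as a proper zero-sum subsequence, so it is \emph{never} itself a minimal zero-sum sequence, and since your ``residual'' has length at most $1$ it must be empty (we may assume $0\nmid UV$). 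Hence two short atoms alone can never be merged into fewer atoms. Any reduction of $k$ must reshuffle elements between the short atoms and the long atoms $W_{t+1},\dots,W_k$; this interaction is where the real combinatorics lies, and it is absent from your sketch. The alternative branch forcing $|U|+|V|\le 2n$ is likewise unmotivated as written.
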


In combination with the preceding result one obtains bounds for $\max \Delta(H)$ for various types of class groups. We formulate one explicitly.

\begin{corollary}
Let $H$ be a Krull monoid with finite class group $G \cong C_n^2$ with $n \ge 2$ such that each class contains a prime divisor. Then 
\[  \max \Delta (H) \le \frac{5n-4}{3}.\]
\end{corollary}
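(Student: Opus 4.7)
The plan is to combine the two displayed theorems in the preceding pages: the equality $\daleth(H) = 2 + \max \Delta(H)$ under the technical condition on $\lfloor \Do(G)/2 + 1 \rfloor$, and the upper bound on $\daleth(H)$ in terms of $\Do(G)$, $\exp(G)$ and $\ro(G)$. For $G \cong C_n^2$ we have $\ro(G)=2$, $\exp(G)=n$, and since $G$ is of rank two, $\Do(G) = \Do^{\ast}(G) = 2n-1$.

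First I would verify that the technical hypothesis of the equality theorem is satisfied. We have
\[
\left\lfloor \tfrac{1}{2}\Do(G)+1\right\rfloor = \left\lfloor n + \tfrac{1}{2}\right\rfloor = n,
\]
while $\max\{n_r,\ 1 + \sum \lfloor n_i/2\rfloor\} \ge n$ trivially (the first term already equals $n$). Hence the equality
\[
\daleth(H) = 2 + \max \Delta(H)
\]
holds.

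Next I would plug $r=2$, $n_r = n$ and $\Do(G) = 2n-1$ into the upper bound on $\daleth(H)$:
\[
\daleth(H) \le \max\left\{n,\ \tfrac{1}{3}\left(2(2n-1) + \tfrac{1}{2}\cdot 2n + 4\right)\right\} = \max\left\{n,\ \tfrac{5n+2}{3}\right\} = \tfrac{5n+2}{3},
\]
where the last equality uses $n \ge 2$ (so that $\tfrac{5n+2}{3} \ge n$). Subtracting $2$ and invoking the equality above yields
\[
\max \Delta(H) = \daleth(H) - 2 \le \tfrac{5n+2}{3} - 2 = \tfrac{5n-4}{3},
\]
which is precisely the claim.

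There is no real obstacle here: the entire argument is a routine specialization of the two cited theorems to the group $C_n^2$, together with the standard computation $\Do(C_n^2) = 2n-1$. The only point worth a sentence of care is the verification of the technical hypothesis of the equality theorem, which in this case is immediate because $n_r = n$ already matches $\lfloor \Do(G)/2 + 1\rfloor$.
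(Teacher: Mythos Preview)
Your proposal is correct and follows exactly the intended route: specialize the two preceding theorems to $G\cong C_n^2$, using $\Do(C_n^2)=\Do^{\ast}(C_n^2)=2n-1$, verify the hypothesis $\lfloor \Do(G)/2+1\rfloor=n\le n_r=n$, and then substitute $r=2$, $\exp(G)=n$, $\Do(G)=2n-1$ into the upper bound for $\daleth(H)$. This is precisely how the corollary is meant to be read off from the two theorems that immediately precede it.
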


We recall that the lower bound for $\max \Delta (H)$ is $n-2$ for odd $n$ and $n-1$ for even $n$ whereas the simple upper-bound given by $\Do(C_n^2)-2$ is $2n-3$. 

We point out that for this problem knowledge of the structure of minimal zero-sum sequences of maximal length seems insufficient. The extremal known examples are attained by minimal zero-sum sequences of length about $\Do(C_n^2)/2$. 

Up to now we only discussed $\Delta(H)$, that is the collection of all distance that can occur in some monoid. 
It is also an interesting question to study $\Delta  ( \Lo(a)  ) $ for individual elements of $a \in H$. 
By definition it is clear that each $d \in \Delta(H)$ occurs in  $\Delta  ( \Lo(a)  ) $ for some $a \in H$.  
Yet, passing to  more than one distance, one gets interesting questions. 
For example, for distances  $d_1,d_2 \in H$ one can ask if there exists some $a \in H$ such that $d_1,d_2  \in \Delta(\Lo(a))$.
Or, for some fixed distance $d \in H$ one can ask what are all the other distances in the sets of lengths having $d$ as a distance; formally, one can study similarly to $\uc_k(H)$ the sets 
\[\bigcup_{a\in H,  \,   d   \in \Delta( \Lo(a)) } \Delta( \Lo(a)).\]
Recently, Chapman, Gotti, and Pelayo \cite{chapmanetal14} obtained the following result on this type of problem. 

\begin{theorem}
Let $H$ be a Krull monoid with cyclic class group of order $n \ge 3$, and let $a \in H$. 
If $n -2 \in  \Delta( \Lo (a))$, then $ \Delta( \Lo (a)) = \{n-2\}$.  
\end{theorem}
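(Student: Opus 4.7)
The plan is to transfer to $\bc(G)$ with $G \cong C_n$ via the block homomorphism $\beta$ (using $\Lo(a) = \Lo(\beta(a))$), set $B = \beta(a)$, and write $B = 0^v B'$ with $0 \nmid B'$; since this only shifts the set of lengths by $v$, we have $\Delta(\Lo(B)) = \Delta(\Lo(B'))$ and I may assume $0 \nmid B$. Then every atom dividing $B$ is a minimal zero-sum sequence of length in $[2, n]$, and the atoms of length exactly $n = \Do(C_n)$ are precisely $g^n$ for generators $g$ of $G$.

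From two factorizations $B = U_1 \cdots U_{k_1} = V_1 \cdots V_{k_2}$ of consecutive lengths in $\Lo(B)$ with $k_2 - k_1 = n-2$, equating $\sum_i |U_i| = \sum_j |V_j| = |B|$ yields
\[
\sum_{i=1}^{k_1} (|U_i|-2) = \sum_{j=1}^{k_2} (|V_j|-2) + 2(n-2) \ge 2(n-2),
\]
a tight constraint since each summand lies in $[0, n-2]$. In the base case $k_1=2$ this forces $|U_1|=|U_2|=n$ and every $|V_j|=2$, giving $U_i = g_i^n$ for generators $g_i$; the factorization $\prod_j V_j = \prod_j h_j(-h_j)$ then matches supports and forces $g_2 = -g_1$, so $B = g^n(-g)^n$ and $\Lo(B) = \{2, n\}$ yields $\Delta(\Lo(B)) = \{n-2\}$ directly. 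I would then organize the general argument as a strong induction on $|B|$: the base $|B| \le 2n$ reduces by a simple length count to the case just treated, and for $|B| > 2n$ the aim is to locate a divisor $g^n(-g)^n \mid B$, set $\tilde B = B / (g^n(-g)^n)$, and inherit $\Delta(\Lo(B)) = \{n-2\}$ from the inductive hypothesis on $\tilde B$ via the containment $\Lo(B) \supseteq \{2, n\} + \Lo(\tilde B)$.

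The main obstacle will be the inductive step. When $k_1 \ge 3$, the length-excess bound above no longer concentrates $2(n-2)$ into two atoms of length $n$, so to extract a divisor $g^n(-g)^n$ of $B$ I would invoke the Savchev--Chen structure theorem on long minimal zero-sum sequences over $C_n$ (already cited in this survey for the $\rho_{2k+1}$-theorems): any minimal zero-sum sequence over $C_n$ of length exceeding $n/2$ is supported on $\{g, 2g, \ldots, (n-1)g\}$ for a single generator $g$, which together with the abundance of length-$2$ atoms $h(-h)$ in the long factorization pins down the support of $B$ enough to exhibit $g^n(-g)^n \mid B$. A secondary task is then to show that any $m \in \Lo(B)$ not lying in the arithmetic progression $\min \Lo(B) + (n-2) \N_0$ would come from a factorization of $B$ restricting to a factorization of $\tilde B$ of a length forbidden by the inductive conclusion $\Delta(\Lo(\tilde B)) \subseteq \{n-2\}$, yielding the desired contradiction and completing the induction.
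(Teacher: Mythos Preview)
The paper does not itself prove this theorem; it is quoted from Chapman, Gotti, and Pelayo \cite{chapmanetal14} without proof, so there is no argument here to compare against. On its own merits: your transfer, the removal of $0$, and the base case $|B|\le 2n$ are correct, and the length count forcing $k_1=2$, $|U_1|=|U_2|=n$, hence $B=g^n(-g)^n$, is clean.

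The inductive step does not close. First, to apply the inductive hypothesis to $\tilde B = B/(g^n(-g)^n)$ you would need $n-2\in\Delta(\Lo(\tilde B))$, which you have not established. Second, and more seriously, even granting $\Delta(\Lo(\tilde B))\subset\{n-2\}$, the containment $\Lo(B)\supset\{2,n\}+\Lo(\tilde B)$ is only one-sided: a factorization $B=W_1\cdots W_m$ need not split as a factorization of $g^n(-g)^n$ times one of $\tilde B$, because a single atom $W_j$ may draw elements from both parts, so there is no ``restriction to $\tilde B$'' in general and extra lengths of $B$ cannot be ruled out this way. (Your Savchev--Chen appeal is also too loose: for $k_1\ge 3$ the excess $2(n-2)$ is spread over $k_1$ summands in $[0,n-2]$, so no individual $U_i$ need have length exceeding $n/2$.) What actually carries the argument is the stronger claim that $n-2\in\Delta(\Lo(B))$ with $0\nmid B$ forces $\supp(B)=\{g,-g\}$ for a single generator $g$; once the support is this small the only atoms over it are $g(-g)$, $g^n$, $(-g)^n$, and a direct parametrization (cf.\ Lemma~\ref{lem_AP}) shows $\Lo(B)$ is an arithmetic progression with difference $n-2$. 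Your base case is precisely this support claim for $\min\Lo(B)=2$; the missing idea is how to establish it in general.
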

We recall that $n-2$ is the maximum of the set of distances for Krull monoid with cyclic class group $n$, assuming that each class contains a prime divisor. A similar result for elementary $2$-groups is also known, see \cite[Lemma 3.10]{WASsumset}.

\section{Large sets}

Sets of lengths can be arbitrarily large. However, one can show that they are not arbitrarily complicated, in a sense to be made precise. 

The construction we saw in Lemma \ref{lem_23}, when we recalled that there cannot be a global bound on the size of sets of lengths in non-half-factorial monoids, suggests that there is some additive structure to large sets of lengths. 
Indeed, this is the case for various classes of monoids. We recall the result and related relevant notions. 

\begin{definition}
A non-empty subset $L$ of $\Z$ is called an almost arithmetic multi-progression (AAMP for short) with bound $M \in \N_0$, difference $d \in \N$ and period $\dc$ (where $\{0, d\} \subset \dc \subset [0,d]$) if 
\[
L = y + (L' \cup L^{\ast} \cup L'') \subset   y +  \dc +  d \cdot \Z
\]
with $0 \in  L^{\ast} = [0, \max L^{\ast}] \cap  (\dc +  d \cdot \Z)$ and $L' \subset [-M, -1]$ and $L'' \subset \max L^{\ast} + [1,M]$.   
One calls $L^{\ast}$ the central part, and $L'$ and $L''$ the beginning and the end part, respectively. 
\end{definition}

The notion of AAMP turns out, as we see below, to be natural for describing sets of lengths of Krull monoids with finite class group, and also other monoids. Informally, one can imagine an AAMP as a union of several slightly shifted copies of an arithmetic  progressions where at the beginning and the end some elements might be removed.  
The definition of AAMP contains the following special cases.

\begin{definition} \ 
\begin{itemize}
\item an AAMP with bound $M=0$ is called an arithmetic multi-progression (AMP for short).  
\item an AAMP with period $\dc = \{0,d\}$ is called an almost arithmetic progression (AAP for short). 
\item an AAMP with bound $M= 0$ and period $\dc = \{0,d\}$ is called an arithmetic progression (AP for short).
\end{itemize}
\end{definition}
The notion of AP just recalled of course coincides with the usual notion of a finite arithmetic progression. 
The notion of arithmetic multi-progression should not be confused with that of multi-dimensional arithmetic progressions,  
which is typically defined as  a sumset of several arithmetic progressions.

Some care needs to be taken when saying that some set is or is not an AAMP. 
In fact, one has: 
\begin{itemize}
\item every non-empty finite set $L\subset \Z$ is an AAP for with  bound $\max L - \min L  $ (and period $\{0,1\}$).
\item every non-empty finite set $L\subset \Z$ is an AMP for with period $-\min L + L$.
\end{itemize}
Thus, it is crucial to restrict bound and period in some way to make saying that a set is an AAMP meaningful.

The importance of the notion of AAMP in this context is mainly due to the following result, a Structure Theorem for Sets of Lengths (STSL). This result is due to Geroldinger \cite{geroldinger88}, except that there  a slightly different notion of AAMP was used; the current version was obtained in \cite{freimangeroldinger00}.  

\begin{theorem}
\label{thm_STSL}
Let $H$ be a Krull monoid with finite class group. There is some $M  \in \N_0$ and  a non-empty finite set  $\Delta^{\ast} \subset \N$ such that 
for each $a \in H$ its set of lengths $\Lo(a)$ is a AAMP with bound $M$ and difference $d$ in $\Delta^{\ast}$.  
\end{theorem}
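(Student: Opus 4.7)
The plan is to pass via the block homomorphism $\beta \colon H \to \bc(G_P)$ recalled in Section \ref{transfer}, which is a transfer homomorphism and hence satisfies $\lc(H)=\lc(\bc(G_P))$; this reduces the assertion to the monoid of zero-sum sequences over $G_P \subset G$. Since $G$ is finite, so is $G_P$, and then $\bc(G_P)$ is a finitely generated reduced monoid: its atoms are the minimal zero-sum sequences over $G_P$, each of length at most $\Do(G_P)$, so there are only finitely many. Moreover, the bound $\sup\Delta(\bc(G_P)) \le \Do(G_P) - 2$ already ensures that $\Delta(\bc(G_P))$ is finite; the candidate set $\Delta^{\ast}$ will be a canonical finite subset of it.

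The next step I would carry out is to establish an arithmetic-progression backbone for long sets of lengths. For each atom $u \in \ac(\bc(G_P))$ I would prove, by an exchange argument, that $\Lo(u^n)$ is an arithmetic progression with some common difference $d_u$ for all $n$ sufficiently large: any two atoms $v_1, v_2$ dividing a large power of $u$ with $|v_1| \ne |v_2|$ can be swapped in and out of factorizations of $u^n$, and after enough iterations the set of realized lengths fills in a full AP whose difference is the $\gcd$ of these length increments. I would then set $\Delta^{\ast} = \{d_u \colon u \in \ac(\bc(G_P))\}$, which is finite. For an arbitrary $a \in \bc(G_P)$ with a long factorization, the pigeonhole principle forces some atom $u$ to appear with high multiplicity, and writing $a = u^k b$ gives $\Lo(a) \supset \Lo(u^k) + \Lo(b)$ by Lemma \ref{lem_add}; combined with the above this produces an AMP-like central part $L^{\ast}$ of $\Lo(a)$ with difference in $\Delta^{\ast}$.

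The main obstacle will be securing a single bound $M$ on the sizes of the beginning part $L'$ and end part $L''$ that depends only on $G$ and not on $a$. To control these I would appeal to Dickson's lemma on the tuples $\bigl(\vo_u(z)\bigr)_{u \in \ac(\bc(G_P))} \in \N_0^{|\ac(\bc(G_P))|}$ recording the multiplicities of atoms in each factorization: this set has only finitely many minimal elements in the componentwise order. Consequently, every factorization of $a$ whose length lies close to $\min \Lo(a)$ or close to $\max \Lo(a)$ can be split into a bounded ``irregular core'', drawn from finitely many possibilities indexed by those minimal tuples, plus a periodic remainder amenable to the swapping machinery above. This uniform decomposition yields a single constant $M$ that controls the ends $L'$ and $L''$ for every $a$. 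Assembling the finite $\Delta^{\ast}$, the central AMP, and the uniformly bounded ends then produces the AAMP description claimed in the theorem.
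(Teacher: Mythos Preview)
The paper is a survey and does not prove Theorem~\ref{thm_STSL}; it attributes the result to Geroldinger \cite{geroldinger88} (with the present formulation from \cite{freimangeroldinger00}) and only discusses its consequences. So there is no in-paper proof to compare against. Your opening reduction to the finitely generated monoid $\bc(G_P)$ via the block homomorphism is correct and is indeed the standard first move, but after that the sketch has genuine gaps.

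The main one is directional. From $a = u^k b$ and Lemma~\ref{lem_add} you obtain only the inclusion $\Lo(u^k) + \Lo(b) \subset \Lo(a)$, which shows that $\Lo(a)$ \emph{contains} a structured set; it says nothing about what else $\Lo(a)$ might contain. The substance of the Structure Theorem is exactly the opposite constraint: every length of $a$ must fall into one AAMP with uniformly bounded ends. Your argument never establishes this outer bound, so it cannot ``produce a central part $L^{\ast}$'' of $\Lo(a)$. The actual proofs proceed differently: any two factorizations of $a$ are connected by a chain whose successive steps come from a finite stock of minimal relations depending only on $G_P$, and it is this chain argument that forces all of $\Lo(a)$ into AAMP shape. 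The set of differences that emerges is $\Dast{H} = \{\min\Delta(G_0) \colon G_0 \subset G_P,\ \Delta(G_0)\neq\emptyset\}$, indexed by supports (equivalently, divisor-closed submonoids), not by individual atoms as in your $\{d_u\}$.

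Two secondary issues. First, the assertion that $\Lo(u^n)$ is a genuine arithmetic progression for large $n$ is already at the level of difficulty of the theorem itself; a priori one only expects an AAMP, and your ``exchange argument'' does not distinguish the two. Second, the Dickson's lemma step is not well-posed as written: for a fixed $a$ the set of factorization tuples is already finite, while over all $a$ it is all of $\N_0^{|\ac(G_P)|}$ with sole minimal element $0$. The finiteness one actually needs is that of minimal elements in the congruence $\{(z,z') \colon \pi(z)=\pi(z')\} \subset \Zo(\bc(G_P))^2$, which is a different object and requires a separate argument.
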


A crucial point in this result is that the bound and the set of differences depend on the monoid, and not on the element. Indeed, by the transfer results recalled in Section \ref{transfer} they depend on the class group or more precisely the subset of classes containing prime divisors, only. 

This result was generalized in several ways and is known to hold for various other classes of monoids, too (see \cite[Chapter 4]{geroldingerhalterkochBOOK}). Even sticking to Krull monoids it holds under the weaker condition that only finitely many classes contain prime divisors, or still weaker, that the Davenport constant of the set of classes containing prime divisors is finite (see Theorem \ref{thm_STSLfD}).

The Structure Theorem for Sets of Lengths raises various follow-up questions. On the one hand, it is a natural question to ask if this description is a natural one or if there could be a simpler one. On the other hand, the result contains a bound $M$ and a set of differences $\Delta^{\ast}$ and the question arises what are the actual values of these parameters. 
We discuss this in the remainder of this section.

\subsection{The relevance of AAMPs}

Realizations results for sets of lengths prove that in a certain sense Theorem \ref{thm_STSL} is optimal. 
We recall such a realization result from \cite{WASreal}; for earlier result of this form see \cite[Section 4.8]{geroldingerhalterkochBOOK}.

\begin{theorem}
\label{thm_real}
Let $M \in \N_0$ and let $\emptyset \neq \Delta^{\ast} \subset \N$ be a finite set. Then, there exists a Krull monoid $H$ with finite class group such that the following holds: 
for every set $L$ that is an AAMP with difference $d \in \Delta^{\ast}$  and bound $M$ there is some $y_{H,L}$ such that 
\[y + L  \in \lc(H)  \text{ for  all }   y \ge y_{H,L}.\] 
\end{theorem}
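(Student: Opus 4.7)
The plan is to reduce, via the transfer machinery of Section \ref{transfer}, to the task of constructing a finite abelian group $G$ such that $\bc(G)$ realizes every prescribed AAMP as a set of lengths (up to translation). Having done so, we may take for $H$ the monoid $\bc(G)$ itself, or invoke the examples from Subsection 2.4 which guarantee a Krull monoid with class group $G$ in which every class contains a prime divisor.

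First I would choose $G$ with enough structure to encode the parameters $M$ and the finite set of differences $\Delta^{\ast}$. A natural candidate is a direct sum $G = \bigoplus_{d \in \Delta^{\ast}} G_d \oplus G_M$, where each $G_d$ is chosen so that it supports a zero-sum sequence with set of lengths $\{2, d+2\}$ (such sequences exist for $d+2 \ge 3$ by the construction of Lemma \ref{lem_23}, using an element of order $d+2$ or, when needed, two independent elements of order $2$), and where $G_M$ is a cyclic or elementary component large enough that one can realize each of the finitely many possible "edge decorations'' of an AAMP of bound $M$ as the set of lengths of an auxiliary zero-sum sequence. Crucially, $0 \in G$ is itself a prime divisor in $\bc(G)$ (the length-one sequence $0$ is a minimal zero-sum sequence), and multiplication by $0^{t}$ satisfies $\Lo(0^{t} \cdot B) = t + \Lo(B)$, which is the mechanism that will produce the shift by $y$.

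Next, for each AAMP $L = L' \cup L^{\ast} \cup L''$ with difference $d \in \Delta^{\ast}$ and bound $M$, I would build a zero-sum sequence $B_L \in \bc(G)$ with $\Lo(B_L) = y_0 + L$ for some $y_0 = y_0(L)$. The central part $L^{\ast}$, an arithmetic progression with difference $d$, is produced by taking a suitable power of a building block $C_d \in \bc(G_d)$ with $\Lo(C_d) = \{2, d+2\}$: by Lemma \ref{lem_add} the lengths add, and a careful analysis (using that $G_d$ is independent from the other components) shows that this yields exactly the desired AP in the central portion. The finite $L'$ and $L''$ contributions, bounded in cardinality by $M$, are realized by appending one of finitely many auxiliary zero-sum sequences over $G_M$, each precomputed to perturb the length spectrum at the endpoints in the correct way. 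Finally, multiplying by $0^{y - y_0}$ for $y \ge y_0$ produces $y + L \in \lc(\bc(G))$.

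The main obstacle is the middle step: ensuring that $\Lo(B_L)$ is \emph{exactly} the intended translate of $L$, and not a strictly larger set produced by unforeseen factorizations. The standard device for this is \emph{block isolation} — placing each building block in an independent component $G_d$ (respectively $G_M$) so that any factorization of $B_L$ must split as a product of factorizations of its components, reducing the verification to the analysis of each component separately. The choice of $G$ must therefore be rich enough (sufficient rank, sufficiently large exponents) that all $|\Delta^{\ast}|$ building blocks and all finitely many edge decorations can be simultaneously accommodated in mutually independent subgroups. Once this orthogonality is in place, the computation of $\Lo(B_L)$ factors across components and the target AAMP is recovered, completing the argument.
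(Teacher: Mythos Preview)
The survey itself does not prove this theorem; it cites \cite{WASreal} and only illustrates the mechanism with Lemma~\ref{lem_AP} and the subsequent example. So there is no ``paper's proof'' to compare against directly, and the hint given later (that $\bc(C_p^r)$ with $r \ge 21(M^2+\max\Delta^{\ast})$ works) suggests the actual construction is rather different from a direct-sum-of-components picture.

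That said, your outline has a genuine gap. First, a minor but telling slip: the central part $L^{\ast}$ of an AAMP is \emph{not} an arithmetic progression with difference $d$; it is $[0,\max L^{\ast}]\cap(\dc+d\Z)$ for an arbitrary period $\{0,d\}\subset\dc\subset[0,d]$. Your one building block $C_d$ with $\Lo(C_d)=\{2,d+2\}$ only produces APs, not AMPs with arbitrary period.

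The more serious problem is the step you label ``edge decorations.'' When your building blocks sit in independent direct summands, block isolation gives you exactly $\Lo(\prod_i B_i)=\sum_i \Lo(B_i)$, a sumset. But sumsets are highly constrained: adding a fixed small set $S$ to a long AP $d\cdot[0,k]$ produces an AMP whose initial and terminal patterns are both determined by $S$ (indeed, the resulting set is symmetric under $x\mapsto \max-x$). You cannot prescribe $L'\subset[-M,-1]$ and $L''\subset\max L^{\ast}+[1,M]$ independently in this way, and an AAMP allows $L'$ and $L''$ to be arbitrary (within the $\dc+d\Z$ coset) and unrelated to one another. The phrase ``precomputed to perturb the length spectrum at the endpoints in the correct way'' hides exactly the difficulty the theorem is about; look at the example after Lemma~\ref{lem_AP}, where the edge gaps arise from a Frobenius-type phenomenon in a sumset of two APs with coprime differences --- these are very specific gaps, not arbitrary ones. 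Producing \emph{every} admissible $(L',\dc,L'')$ is the heart of the argument in \cite{WASreal} and requires more than independent blocks and sumsets.
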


This result implies the existence of  Krull monoids with finite class group whose system of sets of lengths contains all possible sets whose maximum and minimum are not too far apart. (Though, this was known already earlier.) 

\begin{corollary}
Let $M \in \N_0$. Then, there exists a Krull monoid $H$ with finite class group such that \( L  \in \lc(H)\) for every $L \subset  \mathbb{N}_{\ge 2}$ with $\max L - \min L \le M$. 
\end{corollary}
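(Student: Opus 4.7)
The plan is to deduce this corollary directly from the realization theorem, Theorem \ref{thm_real}.

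First, I would apply Theorem \ref{thm_real} with the given bound $M$ and with $\Delta^{\ast} = \{1\}$, obtaining a Krull monoid $H$ with finite class group such that, for every AAMP $L_0$ with bound $M$ and difference $1$, one has $y + L_0 \in \lc(H)$ for all $y \ge y_{H, L_0}$.

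Next, given $L \subset \N_{\ge 2}$ with $\max L - \min L \le M$, I would write $L = \min L + L_0$ where $L_0 := L - \min L$ satisfies $L_0 \subset [0,M]$ and $0 \in L_0$. A direct verification shows $L_0$ is an AAMP with bound $M$, difference $d=1$, and period $\dc = \{0,1\}$: take the central part $L^{\ast} = \{0\}$ (so $\max L^{\ast} = 0$ and $L^{\ast} = [0, \max L^{\ast}] \cap (\dc + d \cdot \Z)$), the empty beginning part $L' = \emptyset$, and the end part $L'' = L_0 \setminus \{0\} \subset [1, M]$. Theorem \ref{thm_real} then yields $L = \min L + L_0 \in \lc(H)$ as soon as $\min L \ge y_{H, L_0}$.

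The main obstacle is covering those $L$ whose minimum lies below the threshold $y_{H, L_0}$. Since there are only finitely many shapes $L_0 \subset [0,M]$ containing $0$, the collection of exceptional pairs $(L_0, \min L)$ with $2 \le \min L < y_{H, L_0}$ is finite; it therefore suffices to ensure these finitely many small-shift sets are also realized. This can be arranged by exploiting the flexibility in the construction behind Theorem \ref{thm_real}: the realizing Krull monoid is built as $\bc(G_P)$ for a subset $G_P$ of a suitable finite abelian group of the constructor's choice, and one has ample freedom in enlarging $G_P$ (without sacrificing finiteness of the class group) to prescribe additional minimal zero-sum sequences whose products give exactly the finitely many missing sets of lengths. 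Once this finite combinatorial check is performed, every $L \subset \N_{\ge 2}$ with $\max L - \min L \le M$ lies in $\lc(H)$.
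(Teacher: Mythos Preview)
The paper offers no proof of the corollary; it simply presents it as a consequence of Theorem~\ref{thm_real}, adding that it ``was known already earlier'' and that the explicit monoids $\bc(C_p^r)$ from \cite{WASreal} satisfy the corollary directly. Your outline matches the natural reduction: apply the theorem with $\Delta^\ast=\{1\}$, write an arbitrary $L$ as $\min L + L_0$ with $L_0\subset[0,M]$ an AAMP of bound $M$, observe that only finitely many small-shift sets (those with $\min L$ below the relevant threshold) are not yet covered.

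The gap is in your final paragraph. You assert that one can ``enlarge $G_P$'' to force in the finitely many missing sets, but this is where the actual content lies, and you do not justify it. One direction is unproblematic: if $G_P\subset G_P'$ inside a finite group, then every $B\in\bc(G_P)$ has the same factorizations in $\bc(G_P')$, so $\lc(\bc(G_P))\subset\lc(\bc(G_P'))$ and nothing already realized is lost. The problematic direction is the other one: to \emph{add} a prescribed exceptional set $L$ you must produce some finite $G_L$ with $L\in\lc(G_L)$ and then pass to a direct sum (equivalently, $\bc(G_P)\times\bc(G_L)$). But the statement ``every finite $L\subset\N_{\ge 2}$ lies in $\lc(G)$ for some finite $G$'' is exactly the corollary with $M=\max L-\min L$, so appealing to it here is circular. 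You need an independent construction or reference for this step --- either the earlier realization results alluded to in the paper (see the reference to \cite[Section~4.8]{geroldingerhalterkochBOOK} preceding Theorem~\ref{thm_real}), or the fact, noted just after the corollary, that the explicit family $\bc(C_p^r)$ from \cite{WASreal} already satisfies the full corollary and not merely the theorem. As written, your ``ample freedom \dots\ finite combinatorial check'' hides precisely the assertion to be proved.
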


In \cite{WASreal} some explicit conditions on the class group were obtained that guarantee that the above results hold. For example, it is known that $\bc(C_p^{r})$ for $p$ a prime greater than $5$ and $r \ge 21 (M^2 + \max \Delta^{\ast})$ fulfills the conditions of Theorem \ref{thm_real} and thus of  the corollary, too.
This motivates the following problem. 

\begin{problem}
Can one determine a function $f: \N \to \N$  such that for $G$ a finite abelian group with  $|G| \ge f(M)$ one has that  $\lc(G)$ contains each finite set $L\subset \N_{\ge 2}$  with $ \max L - \min L \le M $.  
\end{problem}

The author believes that such a function exists and a solution of this problem should be well within reach of current methods and results. The appeal of having such a result would be that it would give a precise way to express the informal idea that   $\lc(G)$ contains all possible sets that are `small' relative to $G$. 

The result that $\lc(G)$ for infinite $G$ contains every finite set $L\subset \N_{\ge 2}$ could be thought of as a limiting case of this result, for an infinite group every finite set $L\subset \N_{\ge 2}$ is `small.' In fact, a positive answer to this problem would even yield a proof of the result for infinite torsion groups. 

We do not recall a proof of Theorem \ref{thm_real} but still recall some simple constructions that show how AAMPs arise naturally in this context (cf. Lemmas \ref{lem_add} and \ref{lem_23}). 

\begin{lemma}
\label{lem_AP}
Let $g \in G$ be an element of order $n \ge 3$. 
Then $\Lo((g(-g))^{k n})$ is an AP with difference $n-2$ and length $k$, more specifically it is $2k+(n-2) \cdot [0,k]$. 
\end{lemma}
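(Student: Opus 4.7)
The plan is to reduce to the monoid $\bc(\langle g \rangle) $, or more economically to $\bc(\{g,-g\})$, since the factorizations of $(g(-g))^{kn}$ only involve $g$ and $-g$. This is legitimate: $g(-g)$ is itself a minimal zero-sum sequence, so any irreducible divisor of $(g(-g))^{kn}$ in $\bc(G)$ already lies in $\bc(\{g,-g\})$.

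The first step is to enumerate the atoms of $\bc(\{g,-g\})$. A zero-sum sequence over $\{g,-g\}$ has the shape $g^a(-g)^b$ with $(a-b)g=0$, that is $n\mid a-b$. If $a\ge 1$ and $b\ge 1$, then $g(-g)$ is a zero-sum subsequence, so minimality forces $a=b=1$. Otherwise one of $a,b$ is $0$ and the other must be a positive multiple of $n$, and then minimality forces it to equal $n$. Hence $\ac(\{g,-g\})=\{g(-g),\,g^n,\,(-g)^n\}$.

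The second step is to parametrize factorizations of $B=(g(-g))^{kn}=g^{kn}(-g)^{kn}$. Any factorization has the form $(g^n)^{x}((-g)^n)^{y}(g(-g))^{z}$, with $x,y,z\in\N_0$. Counting occurrences of $g$ and of $-g$ yields $nx+z=kn$ and $ny+z=kn$, so $x=y$ and $z=n(k-x)$ with $x\in[0,k]$. The length of this factorization is
\[
2x+n(k-x) \;=\; 2k+(n-2)(k-x).
\]
As $x$ runs through $[0,k]$, the parameter $k-x$ also runs through $[0,k]$, giving
\[
\Lo(B) \;=\; 2k+(n-2)\cdot[0,k],
\]
which is exactly the claimed arithmetic progression of length $k+1$ (that is, with $k$ gaps) and difference $n-2$. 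Note that $n\ge 3$ ensures the difference is positive, so distinct values of $x$ yield distinct lengths. The only step requiring any care is the classification of atoms, and this is handled by the short case analysis above.
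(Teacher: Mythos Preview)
Your proof is correct and follows essentially the same route as the paper's: classify the atoms of $\bc(\{g,-g\})$ as $g(-g)$, $g^n$, $(-g)^n$, then read off the possible factorizations of $(g(-g))^{kn}$ and their lengths. You simply supply more detail (the reduction to $\bc(\{g,-g\})$, the case analysis for atoms, and the explicit solution of the multiplicity equations) than the paper's two-line argument.
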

\begin{proof}
The only minimal zero-sum sequences over the set $\{-g,g\}$ are $(-g)g$, $g^n$, and $(-g)^n$.
The only factorizations of $(g(-g))^{k n}$ are thus $(g^n(-g)^n)^{k-j}   (g(-g))^{nj}$ for $j \in [0,k]$; their lengths are $2(k - j) + jn $. 
\end{proof}

Based on this lemma we give explicit examples of richer structures arising as sets of lengths; we choose to really fix some parameters to avoid confusion from having many parameters.  

\begin{example}
Let $e_1, e_2, g,h \in G$ be independent elements of order $2$, $2$, $10$ and $14$ respectively, then 
\[
\Lo((g(-g))^{10k}(h(-h))^{14k})= \{4k  \} \cup   (4k + 8 + 4 \cdot [0, 5k-4]) \cup \{24 k\}
\]
is an AAP with difference $4$ and bound $8$, and 
\[
\begin{aligned}
\Lo( (e_1e_2(e_1+e_2))^2 & (g(-g))^{10k}(h(-h))^{14k})=    \{4k +2, 4k+3 \}  \\
 & \cup   (4k + 10 +\{0,1\} +  4 \cdot [0, 5k-4]) \cup \{24k +2, 24k+3\}
\end{aligned}
\]
is an AAMP with difference $4$, period $\{0,1,4\}$ and bound $8$.
\end{example}

\subsection{Some special cases}

As discussed for a general result the notion of AAMP seems inevitable. 
However, for special classes of groups simpler descriptions can be obtained. 
This is of course the case for class groups $C_1$ and $C_2$ where the system 
of sets of lengths consists of singletons only (see Theorem  \ref{thm_carlitz}), 
but it is certainly  also the case for $C_2^2$ and $C_3$ where by Theorem \ref{thm_dist2c} 
one has that $\Delta(G)= \{1\}$, which implies that all sets are intervals. 

In recent work of Geroldinger and the author \cite{WAScharR2} a characterization 
of all groups was obtained for which the more restrictive notions AP, AAP, or AMP suffice to describe 
the system of sets of lengths of $\bc(G)$.
We recall the result.  (The definition and relevance of the set $\Dast{G}$, used in the result below, is recalled later in this section; 
the exact definition is not really crucial for the  result below, and it could be replaced by $[1, |G|]$ for example.) 

\begin{theorem}
Let $G$ be a finite abelian group.
\begin{enumerate}
\item The following statements are equivalent:
      \begin{itemize}
      \item All sets of lengths in $\lc (G)$ are arithmetical progressions.
      \item $G$ is cyclic of order $|G| \le 4$ or isomorphic to a subgroup of  $C_2^3$ or isomorphic to a subgroup of  $C_3^2$.
      \end{itemize}
\item The following statements are equivalent:
      \begin{itemize}
      \item There is a constant $M \in \mathbb N$ such that all sets of lengths in $\mathcal L (G)$ are {\rm AAP}s with bound $M$.
      \item $G$ is isomorphic to a subgroup of $C_3^3$ or isomorphic to a subgroup of $C_4^3$.
      \end{itemize}
\item The following statements are equivalent:
      \begin{itemize}
      \item All sets of lengths in $\lc (G)$ are AMPs with difference in $\Dast{G}$.
      \item $G$ is cyclic with $|G| \le 5$  or isomorphic to a subgroup of $C_2^3$ or isomorphic to a subgroup of $C_3^2$.
      \end{itemize}
\end{enumerate}
\end{theorem}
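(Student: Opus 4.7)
The plan is to work throughout in the monoid of zero-sum sequences $\bc(G)$, which suffices since a transfer homomorphism preserves systems of sets of lengths (Section \ref{transfer}). Each of the three statements is an ``if and only if'' about small class groups, so I would split the argument into a sufficient direction (the listed groups have the stated property) and a necessary one (no other group does).

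For the sufficient direction I would first dispose of the easiest cases $C_1, C_2, C_3, C_2^2$ via Theorem \ref{thm_dist2c}, which gives $\Delta(G) \subset \{1\}$; hence every set of lengths is an interval and in particular an AP with difference $1$. This handles the smallest group in each of the three lists simultaneously. For the remaining intermediate groups---$C_4, C_3^2, C_2^3$ in (1) and (3), and $C_3^3, C_4^3$ in (2)---I would apply the Structure Theorem for Sets of Lengths (Theorem \ref{thm_STSL}) to obtain an AAMP description of each $\Lo(B)$ and then argue by direct analysis of the factorizations of $B \in \bc(G)$ that the AAMP description collapses to the stronger form required (AP, AAP with uniform bound $M$, or AMP with difference in $\Dast{G}$). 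The argument exploits the fact that these groups have a very restricted pool of minimal zero-sum sequences of near-maximal length, so the possible periods $\dc$ arising in an AAMP can be classified by hand and a uniform bound $M$ can be extracted.

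For the necessary direction I would, for each group $G$ outside the relevant list, exhibit an explicit $B \in \bc(G)$ whose $\Lo(B)$ violates the stated property. The building blocks are the constructions of Lemma \ref{lem_23} and Lemma \ref{lem_AP}: multiplying together zero-sum sequences producing incompatible differences yields sets of lengths with ``holes'' that prevent them from being APs (ruling out the extra groups in (1)) or AMPs with differences in $\Dast{G}$ (ruling out the extra groups in (3)). For (2) the strategy is different: one needs an infinite family of elements $B_n$ whose sets of lengths require unbounded boundary part, so that no uniform $M$ can work; the realization results behind Theorem \ref{thm_real} guide the construction. Typical borderline groups to treat as ``first failures'' are $C_5$ for (1), $C_2^4$ and $C_3 \oplus C_9$ for (1) and (3), and $C_3^4$, $C_5^2$, $C_4^4$, and $C_4 \oplus C_{12}$ for (2).

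The main obstacle will be the positive direction for $C_3^3$ and $C_4^3$ in part (2): here one must show that the period in every AAMP arising as a set of lengths is forced to equal $\{0,d\}$, that is, that the central part is itself an arithmetic progression, and that the boundary part admits a uniform bound. This requires a precise structural description of minimal zero-sum sequences in rank-three $p$-groups with small exponent, information substantially more delicate to extract than the analogous facts for rank at most two used in the easier parts. Once the possible central parts have been classified, the uniform bound $M$ then emerges by estimating the boundary contribution in terms of the (finite) set of factorization patterns; the negative direction, while involving many cases, is more mechanical once the right product of ``obstruction'' sequences has been identified.
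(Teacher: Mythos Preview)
The paper does not contain a proof of this theorem at all: it is a survey, and this result is merely \emph{recalled} from the cited paper \cite{WAScharR2} (Geroldinger--Schmid). So there is no ``paper's own proof'' to compare against; your plan would have to be measured against that external source.

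That said, your outline is broadly reasonable as a strategy---split into sufficient and necessary directions, reduce via $\lc(G')\subset\lc(G)$ for $G'\subset G$ to maximal groups on the list (sufficient) and minimal groups off the list (necessary), and build counterexamples from the standard constructions---but there are concrete gaps. First, in the sufficient direction for (3) you omit $C_5$: it is on the list for (3) but not for (1), and showing that every $L\in\lc(C_5)$ is an AMP with difference in $\Dast{C_5}$ is exactly the content of the lemma following the theorem in the paper; this case needs its own argument and is not covered by your treatment of $C_4$, $C_2^3$, $C_3^2$. Second, your list of ``first failures'' for the necessary direction is incomplete: for (1) the minimal groups outside the list include not only $C_5$ but also $C_6$, $C_2\oplus C_4$, $C_2^4$, and $C_3^3$ (none of which contains another), and similarly for (2) you need $C_5$, $C_6$, $C_2^4$, $C_3^4$ among others; the reduction to a finite set of minimal obstructions must be made explicit. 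Third, the positive direction for $C_3^3$ and $C_4^3$ in (2), which you correctly flag as the hard part, is not something the STSL plus a ``classification by hand'' readily yields---this is precisely the substantial new content of \cite{WAScharR2}, and your plan gives no indication of how the period is forced to be $\{0,d\}$.
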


In several of these cases it is even possible to give a complete description of $\lc(G)$. 
We already discussed the first point several times; for the following ones see \cite[Theorem 7.3.2]{geroldingerhalterkochBOOK}, and for the last one \cite[Proposition 3.12]{WASsumset}.

\begin{proposition} \ 
\begin{enumerate}
\item $\lc (C_1) = \lc (C_2) = \big\{ \{m\} \colon m \in \N_0 \big\}$.
\item $\lc (C_3) = \lc (C_2 \oplus C_2) = \bigl\{ y
      + 2k + [0, k] \colon  y,\, k \in \N_0 \bigr\}$.
\item $\lc (C_4) = \bigl\{ y + k+1 + [0,k] \colon y,
      \,k \in \N_0 \bigr\} \,\cup\,  \bigl\{ y + 2k + 2 \cdot [0,k] \colon  y,\, k \in \N_0 \bigr\} $.
\item $\lc (C_2^3)  =  \bigl\{ y + (k+1) + [0,k] \colon  y \in \N_0, \ k \in [0,2] \bigr\}$ \newline
      $\quad \text{\, } \ \qquad$ \quad $\cup \ \bigl\{ y + k + [0,k] \colon  y \in \N_0, \ k \ge 3 \bigr\}
      \cup \bigl\{ y + 2k
      + 2 \cdot [0,k] \colon y ,\, k \in \N_0 \bigr\}$.
\item  $\lc (C_3^2) = \{ [2k, l] \colon k \in  \N_0, l \in [2k, 5k]\}$ \newline
 $\quad \text{\, } \ \qquad$ \quad $\cup \ \{ [2k+1, l] \colon k \in \N, l \in [2k+1, 5k+2] \} \cup \{ \{ 1\}  \}$.
\end{enumerate}
\end{proposition}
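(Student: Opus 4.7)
The plan is to handle the five items in sequence. The common thread is to list the atoms of $\bc(G)$, parametrize factorizations, and compare the resulting sets of lengths to the claimed families.

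Items (1) and the interval families of (2) are short once the general results are in hand. For $C_1$ and $C_2$, Theorem~\ref{thm_carlitz} gives half-factoriality, so $\lc(G) = \{\{m\} \colon m \in \N_0\}$, and each $m$ is attained by $0^m$. For $C_3$ and $C_2 \oplus C_2$, Theorem~\ref{thm_dist2c} gives $\Delta(G) = \{1\}$, so every set of lengths is an interval $[m, m+k]$; it remains to pin down the pairs $(m,k)$ that occur. Over $C_3 = \langle g \rangle$ each zero-sum sequence has the form $B = 0^a g^{b_1}(-g)^{b_2}$ with $b_1 \equiv b_2 \pmod 3$, and parametrizing factorizations by the multiplicity $s$ of $g(-g)$ together with $p, q$ of $g^3, (-g)^3$ and solving $s + 3p = b_1$, $s + 3q = b_2$ one finds that $\Lo(B)$ is the interval $y + 2k + [0, k]$ with $k = \min\{\lfloor b_1/3\rfloor, \lfloor b_2/3\rfloor\}$ and all $(y, k) \in \N_0^2$ occurring. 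Over $C_2 \oplus C_2$ the atoms $0, e_i^2, e_1 e_2 e_3$ yield an identical computation: the key point is that once the multiplicity of $e_1 e_2 e_3$ is chosen, the remainder factors uniquely into squares.

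For items (3) and (4) a finer case analysis is needed. Over $C_4$ the six non-zero atoms are $g^4, (3g)^4, (2g)^2, g(3g), g^2(2g), (3g)^2(2g)$ and factorizations of a generic $B = 0^a g^{b_1}(2g)^{b_2}(3g)^{b_3}$ (with $b_1 + 2b_2 - b_3 \equiv 0 \pmod 4$) are parametrized by the multiplicities of these atoms subject to three linear equations. Splitting cases on the multiplicities of the ``asymmetric'' atoms $g^2(2g)$ and $(3g)^2(2g)$ shows that $\Lo(B)$ is either an interval or a $2$-arithmetic progression, matching $y + k + 1 + [0, k]$ and $y + 2k + 2 \cdot [0, k]$ respectively. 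For realization, $0^y (g(3g))^{4k}$ yields the second family via Lemma~\ref{lem_AP} (applied with $n = 4$), and $0^y \cdot g^{2k}(2g)^2(3g)^{2k}$ yields the first family by direct computation. The argument for $C_2^3$ follows the same template: one lists the atoms (seven squares, seven triples $e_1 + e_2 + e_3 = 0$, and seven length-$4$ atoms corresponding to affine planes not through the origin), and the parity pattern of the multiplicities $(b_g)_{g \in C_2^3 \setminus \{0\}}$ of $B$ dictates which of the three families $\Lo(B)$ belongs to. The restriction $k \in [0, 2]$ on the first family reflects a ceiling on how many length-$4$ atoms can overlap within a single zero-sum sequence.

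For item (5) the approach is different. The lemma preceding Theorem~\ref{thm_dist2c} (applied with $r = 2$, $n_1 = n_2 = 3$) only produces the distance $1$, and the upper bound $\max \Delta \le \Do(G) - 2 = 3$ is far from tight here; a direct inspection of factorizations in $\bc(C_3^2)$ then pins down $\Delta(C_3^2) = \{1\}$, so all sets of lengths are intervals. For each $k \in \N_0$ the set $\{k\}$ is realized by $0^k$, and $\{1\}$ appears as the length set of any atom. For $k \ge 1$, any interval with minimum $k$ has maximum bounded by $\rho_k(C_3^2)$; Lemma~\ref{lem_rho2} gives $\rho_{2k}(C_3^2) = 5k$, while the theorem of Geroldinger, Grynkiewicz, and Yuan applied with $\Do(C_3^2) = \Do^{\ast}(C_3^2) = 5$ and $s_i = 2$ gives $\rho_{2k+1}(C_3^2) = 5k + 2$, matching the stated bounds. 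The realization of every interval in the claimed range follows by taking products of a minimum-length atom with its negative, combined with the atoms $e^3$ for $e \in C_3^2 \setminus \{0\}$ to interpolate intermediate maxima.

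The main obstacle is the combinatorial bookkeeping for items (3) and (4), particularly ruling out ``hybrid'' sets that would combine a short interval with a $2$-progression tail. The cleanest route is to classify $\Lo(B)$ according to the parity type of the group-element multiplicities in $B$, since these parities force which atoms must appear and thus dictate the arithmetic structure of the length set. Item (1) is a direct consequence of Theorem~\ref{thm_carlitz}, item (2) follows from $\Delta(G) = \{1\}$ plus the explicit parametrization above, and item (5) reduces cleanly to the elasticity bounds once $\Delta(C_3^2) = \{1\}$ is established.
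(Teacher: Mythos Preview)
The paper does not supply a proof of this proposition; it simply records that item (1) was already discussed (via Theorem~\ref{thm_carlitz}), refers items (2)--(4) to \cite[Theorem 7.3.2]{geroldingerhalterkochBOOK}, and refers item (5) to \cite[Proposition 3.12]{WASsumset}. So there is no in-paper argument to compare against; your proposal is effectively a sketch of a direct proof to stand in for those citations.

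Your outline for (1)--(4) is the standard route and is sound. Listing the atoms of $\bc(G)$, parametrizing factorizations by atom multiplicities, and reading off $\Lo(B)$ is exactly how these results are proved in the cited reference; the only laborious part is the bookkeeping for $C_4$ and $C_2^3$, which you correctly identify and describe at an appropriate level for a proposal.

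For item (5) there are two places where more than a sentence is needed. First, ``a direct inspection of factorizations in $\bc(C_3^2)$ then pins down $\Delta(C_3^2) = \{1\}$'' hides real work: none of the general bounds available in the paper gives $\max\Delta(C_3^2) \le 1$ (the Davenport bound yields only $\le 3$), and the equality $\max\Delta(G) = \max\{\exp(G)-2, \ro(G)-1\}$ is still posed as an open problem there. You must actually carry out this inspection, which amounts to controlling the length sets of products of atoms over the eight non-zero elements of $C_3^2$; this is doable but is not a one-liner. Second, your realization sketch (``products of a minimum-length atom with its negative, combined with the atoms $e^3$'') as stated produces only intervals $[2k,l]$ with $l$ lying in an arithmetic progression of difference $3$, not every $l \in [2k,5k]$. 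To hit the remaining values you will need to mix in pairs $V(-V)$ with $|V| \in \{3,4\}$, and then verify that the minimum of the resulting set of lengths is still $2k$ (respectively $2k+1$); this last point is not automatic, since $\min\Lo$ is not additive under products.
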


However, to obtain results of this complete form becomes quite difficult. We recall a quite precise yet not complete description for the  group of order $5$ from \cite{WAScharR2}.  

\begin{lemma}
Let $G$ be a cyclic group  of order $|G|=  5$. Then every $L \in \lc (G)$ has one of the following forms:
\begin{itemize}
\item $L$ is an arithmetical progression with difference $1$.
\item $L$ is an arithmetical progression with difference $3$.
\item $L$ is an  AMP with period $\{0,2,3\}$ or with period $\{0, 1, 3\}$.
\end{itemize}
\end{lemma}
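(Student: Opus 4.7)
The plan is to reduce to $\bc(C_5)$ via the transfer of Section~\ref{transfer}, and then to split cases according to whether $3 \in \Delta(L)$, where $L = \Lo(B)$ for $B \in \bc(C_5)$. Fix a generator $g$ of $C_5$. By Theorem~\ref{thm_dist2c}, $\Delta(L) \subseteq \Delta(C_5) = \{1,2,3\}$. If $3 \in \Delta(L)$, the Chapman--Gotti--Pelayo theorem recalled immediately before the lemma forces $\Delta(L) = \{3\}$, so $L$ is an AP with difference $3$ and the second case of the lemma holds.

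Thus assume $\Delta(L) \subseteq \{1,2\}$. The four forms listed in the lemma, taken together, describe exactly the finite subsets of $\Z$ whose sequence of consecutive differences is an initial segment of one of $(1,1,1,\ldots)$, $(3,3,3,\ldots)$, $(1,2,1,2,\ldots)$, or $(2,1,2,1,\ldots)$. Hence, if one further assumes $2 \in \Delta(L)$, the remaining task is to show that no two consecutive differences of $L$ coincide; equivalently, the differences of $L$ strictly alternate between $1$ and $2$. In particular, one must exclude three consecutive elements $\ell_1 < \ell_2 < \ell_3$ of $L$ with $(\ell_2-\ell_1,\ell_3-\ell_2) \in \{(1,1),(2,2)\}$.

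To establish this, I would work directly with the explicit list of minimal zero-sum sequences over $C_5$: two atoms of length $2$ (namely $g(-g)$ and $(2g)(-2g)$), four atoms of length $3$, four of length $4$, and four of length $5$ (the sequences $(kg)^5$ for $k \in [1,4]$). Any two factorizations of $B$ differ by an integer vector in the kernel of the $4 \times 14$ content matrix, and such a vector changes the number of atoms by a specific integer. I would single out the relations of length change $1$, such as $3 \cdot (g^2(3g)) \leftrightarrow g^5 \cdot (g(3g)^3)$ and $(g(4g))^2 \cdot (2g)(3g) \leftrightarrow (g^2(3g)) \cdot (2g)(4g)^2$ (together with their images under multiplication of $C_5$ by $2, 3, 4$), as well as the length-$3$ relations of shape $5\cdot(g(-g)) \leftrightarrow g^5 \cdot (-g)^5$. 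Given factorizations of $B$ of lengths $\ell$ and $\ell + 2$, the idea is to apply one of the length-$1$ relations to obtain an intermediate factorization of length $\ell + 1$---unless the content of $B$ only permits a length-$3$ relation between them, which is precluded by the standing assumption $\Delta(L) \subseteq \{1,2\}$.

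The hard part will be this last step: verifying that a length-$1$ relation is always actually available. This requires a case analysis on the multiplicities $(\vo_g(B), \vo_{2g}(B), \vo_{3g}(B), \vo_{4g}(B))$ and on which atoms appear in the two extremal factorizations. The $C_5$-symmetry given by multiplication by $2, 3, 4$ reduces the number of essentially distinct cases, but a substantive enumeration remains, and it is precisely at this step that the primality of $5$---which forces every length-$5$ atom to be of the pure form $(kg)^5$ and thereby severely restricts the possible length-changing trades---plays the decisive role.
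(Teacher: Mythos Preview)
The paper does not prove this lemma; it is merely recalled from \cite{WAScharR2}, so there is no in-paper argument to compare against. Your reduction via transfer and the use of the Chapman--Gotti--Pelayo theorem to dispose of the case $3\in\Delta(L)$ are both sound, and your combinatorial reformulation (the four forms correspond exactly to difference sequences that are prefixes of $(1,1,\ldots)$, $(3,3,\ldots)$, $(1,2,1,2,\ldots)$, or $(2,1,2,1,\ldots)$) is correct.

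The genuine gap is in your plan for the ``hard part''. You propose, given factorizations of lengths $\ell$ and $\ell+2$, to produce a factorization of length $\ell+1$ by applying a length-$1$ trade. But if this could always be done, it would show that $2\notin\Delta(L)$ whenever $3\notin\Delta(L)$, which is false (already $\Lo\bigl(g^5(2g)^5\bigr)=\{2,4\}$ over $C_5$). So the argument as written proves too much; what you actually need is a mechanism that distinguishes a \emph{single} gap of size $2$ (which is allowed) from two \emph{consecutive} gaps of size $2$ (which must be excluded), and your outline does not supply one. Moreover, you give no argument at all for excluding the pattern $(1,1)$ under the hypothesis $2\in\Delta(L)$: this is a non-local statement (the forbidden run of three consecutive integers and the witnessing gap of size $2$ may occur far apart in $L$), and no amount of local trade analysis between two adjacent lengths will yield it directly. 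A workable approach has to track more global information about the support of $B$ in $C_5$---for instance, reducing via the $g\mapsto 2g$ symmetry and then analysing how the multiplicities $\vo_g(B),\vo_{4g}(B)$ interact with the remaining support---rather than trying to interpolate one factorization at a time.
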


\subsection{The set of differences}

The formulation of  the Structure Theorem of Sets of Lengths contains a set $\Delta^{\ast}$. 
We give an overview on the current knowledge about these sets.
Of course, given the way the result is phrased this set cannot be determined uniquely; 
for one thing, if some set $\Delta^{\ast}$ is admissible for some bound $M$, then any superset of it would work, too.

Yet, there is a natural choice for the set $\Delta^{\ast}$ in the STSL for Krull monoids with finite class group, it is 
\[\Dast{H} = \{\mD{S} \colon  S \subset H \text{ a divisor-closed submonoid with } \Delta(S)\neq \emptyset\}.\]
We recall that a submonoid $S \subset H$ is called divisor-closed if for each $s \in S$ every $a \in H$ with $a \mid s$ (in $H$) is in fact an element of $S$. 

The result holds true for this set and it can be shown that $\lc(H)$ contains AAMPs with difference $d$ for each $d \in \Dast{H}$, so that it is not ``too large.'' The details of the proof of the STSL provide further justification  for considering this set as the natural choice. 

It should be noted though that in general this is not  a minimal choice. 
If $L$ is an AAMP with  difference $d$, period $\dc$ and bound $M$, then $L$ is also an AAMP with difference $md$, period $\dc + d \cdot [0,m-1]$, and bound $M$. 
Thus, if the STSL  holds for some set $\Delta^{\ast}$ that contains elements  $d, d'$ with $d \mid d'$, then one could omit $d$ without effect on the result. 

Thus, one could in principle ``simplify'' the set $\Dast{H}$ by omitting elements that are  a divisor of an element already in the set. Yet doing so rather obscures the situation without yielding a true simplification.
 
Similarly, setting $D = \lcm \Dast{H}$ one can even replace the set of differences by a unique difference and get the following reformulation of the STSL.

\begin{corollary} 
\label{cor_STSL}
Let $H$ be a Krull monoid with finite class group. There is some $M  \in \N_0$ and some $D \in \N$  such that for each $a \in H$ its set of lengths $\Lo(a)$ is a AAMP with bound $M$ and difference $D$.
\end{corollary}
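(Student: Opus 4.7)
The plan is to derive Corollary \ref{cor_STSL} directly from Theorem \ref{thm_STSL} by exploiting the observation already highlighted in the excerpt: an AAMP with difference $d$ can be rewritten as an AAMP with any multiple $md$ of $d$ by enlarging the period appropriately, while keeping the bound unchanged. So the proof is essentially a bookkeeping argument followed by taking a common multiple.

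First I would apply Theorem \ref{thm_STSL} to obtain a bound $M \in \N_0$ and a non-empty finite set $\Delta^{\ast} \subset \N$ such that every $\Lo(a)$ is an AAMP with bound $M$ and some difference $d \in \Delta^{\ast}$. Since $\Delta^{\ast}$ is finite and non-empty, I would then set $D = \lcm \Delta^{\ast} \in \N$; this is the single difference that will work for all elements.

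Next I would verify the promotion step: if $L$ is an AAMP with bound $M$, difference $d$ and period $\dc \subset [0,d]$ with $\{0,d\} \subset \dc$, and if $D = md$ for some positive integer $m$, then $L$ is also an AAMP with bound $M$ and difference $D$. For this, one sets $\dc' = \dc + d \cdot [0, m-1]$. The inclusion $\dc' \subset [0, md] = [0, D]$ follows from $\dc \subset [0,d]$, and $\{0, D\} \subset \dc'$ follows from $0 \in \dc$ (giving $0 \in \dc'$) and $d \in \dc$ combined with $d(m-1) \in d \cdot [0,m-1]$ (giving $D = md \in \dc'$). Moreover, as sets of residue cosets, $\dc + d \cdot \Z = \dc' + D \cdot \Z$, so the central part condition $L^{\ast} = [0, \max L^{\ast}] \cap (\dc + d \cdot \Z) = [0, \max L^{\ast}] \cap (\dc' + D \cdot \Z)$ still holds, and the beginning and end parts $L', L''$ are unaffected since the bound $M$ does not change.

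Finally, applying this promotion step with $m = D/d$ to each $d \in \Delta^{\ast}$ (noting $d \mid D$ by the choice of $D$) shows that every $\Lo(a)$, being an AAMP with bound $M$ and some difference in $\Delta^{\ast}$, is also an AAMP with bound $M$ and the single difference $D$. There is no real obstacle here; the only minor subtlety is the verification that enlarging the period in the prescribed way is compatible with the formal definition of AAMP, and that the bound $M$ need not be enlarged in the process.
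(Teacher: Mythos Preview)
Your proposal is correct and follows precisely the approach indicated in the paper: set $D = \lcm \Delta^{\ast}$ and use the observation (stated verbatim just before the corollary) that an AAMP with difference $d$, period $\dc$, and bound $M$ is also an AAMP with difference $md$, period $\dc + d\cdot[0,m-1]$, and the same bound $M$. Your explicit verification of the promotion step simply fills in the details the paper leaves to the reader.
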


While somewhat simpler to state, this formulation captures the reality of the situation not as well as the common one.

By transfer results as recalled in Section \ref{transfer} one can get that  
\[
\Dast{H} =  \{\mD{G_0} \colon  G_0 \subset G_P, \, \Delta(G_0 )  \neq \emptyset \}
\]
where as usual $G_P \subset G$ denotes the subset of classes containing prime divisor and $G$ the class group. (Some extra care  is needed to check that divisor-closed submonoids actually are preserved in this way.)

For $|G|\ge 3$, one denotes by $\Dast{G}=  \{\mD{G_0} \colon  G_0 \subset G, \, \Delta(G_0 )  \neq \emptyset \}$; this  matches the usual convention that 
$\Dast{G} = \Dast{\bc(G)}$.

By Lemma \ref{lem_23} we know that $\mD{G} = 1$  for $|G| \ge 3$. Thus $1 \in \Dast{G}$. 
Moreover the following constructions of elements of $\Dast{G}$ are classical. 

\begin{lemma}
Let $G$ be a finite abelian group with $|G|\ge 3$. 
\begin{enumerate}
\item $[1, \ro(G)-1] \subset \Dast{G}$. 
\item $d-2 \in \Dast{G}$ for each $3 \le d \mid \exp(G)$.
\item $|n-r-1| \in \Dast{C_n^r}$ for $n \ge 2$, $r \ge 1$, and $n \neq r+1$.
\end{enumerate}
In particular, $\max \Dast{G} \ge \max \{\ro(G)-1, \exp(G)-2\}$.
\end{lemma}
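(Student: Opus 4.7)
The plan is to prove each part by exhibiting an explicit subset $G_0 \subset G$ that realizes the claimed integer as $\mD{G_0}$; the ``in particular'' assertion then drops out by specializing part~(1) to $k = \ro(G)-1$ and part~(2) to $d = \exp(G)$, the degenerate cases $\ro(G) \le 1$ or $\exp(G) \le 2$ being either vacuous or handled by the other clause. For part~(2), I fix $g \in G$ of order $d$ (which exists because $d \mid \exp(G)$) and take $G_0 = \{g, -g\}$; these are distinct since $d \ge 3$. The minimal zero-sum sequences over $G_0$ are exactly $g(-g)$, $g^d$ and $(-g)^d$, so applying Lemma~\ref{lem_AP} to the elements $(g(-g))^{dk}$ — equivalently, enumerating the factorizations of a generic $g^a(-g)^b$ with $a \equiv b \pmod d$ — shows that every non-singleton set of lengths in $\bc(G_0)$ is an arithmetic progression with common difference $d-2$, giving $\mD{G_0} = d-2$.

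Part~(3) is exactly the content of Example~\ref{ex_n-r-1}; the same listing of minimal zero-sum sequences over $\{e_1,\ldots,e_r,\,e_1+\ldots+e_r\}$ also covers the boundary case $n=2$. For part~(1), I write $G = C_{n_1} \oplus \ldots \oplus C_{n_r}$ with $n_1 \mid \ldots \mid n_r$ and set $g_i = (n_i/n_1) e_i$; the $g_i$ are $r$ independent elements of common order $n_1$ generating a subgroup isomorphic to $C_{n_1}^r$. Since $\Dast{H} \subset \Dast{G}$ for every subgroup $H \le G$, it suffices to show $[1, r-1] \subset \Dast{C_{n_1}^r}$. Whenever $n_1 + k - 1$ or $n_1 - k - 1$ lies in $[1, r] \setminus \{n_1 - 1\}$, part~(3) applied to the corresponding $C_{n_1}^{r'}$ supplies the distance $k$ for free.

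The residual cases — the extremal one being $k = r-1$ when $n_1 \ge 3$ — I would handle with an asymmetric diagonal element. Concretely, set $\tilde g = -(g_1 + 2 g_2 + \ldots + 2 g_r)$ and $G_0 = \{\tilde g, g_1, \ldots, g_r\}$; solving the componentwise congruences modulo $n_1$ produces the complete atom list (the $g_i^{n_1}$, the power $\tilde g^{n_1}$, and two hybrid atoms of lengths roughly $2r$ and $r+3$), and a short linear-algebra count of the factorizations of $\tilde g^{n_1} g_1^{n_1} \cdots g_r^{n_1}$ yields the set of lengths $\{2,\,r+1\}$, hence $\mD{G_0} = r-1$. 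The main obstacle I expect is verifying the completeness of the atom list in such asymmetric configurations: one has to rule out an exotic zero-sum sequence that would contribute an intermediate factorization length and thereby collapse the observed minimum distance. Analogous asymmetric choices of diagonal must be designed and checked for each residual intermediate $k$ that escapes the direct application of part~(3), and in each case the heart of the argument is the componentwise congruence analysis that pins down the atoms exactly.
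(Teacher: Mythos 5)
Parts (2) and (3) of your argument are correct and coincide with the paper's proof (the set $\{g,-g\}$ for an element $g$ of order $d$, and Example \ref{ex_n-r-1}, respectively). The gap is in part (1), in your treatment of the residual cases. Keep in mind that $k \in \Dast{G}$ requires a subset $G_0$ with $\mD{G_0}=k$: exhibiting one element of $\bc(G_0)$ whose set of lengths has a gap of size $k$ is not enough, because you must also rule out smaller gaps arising from \emph{every other} element of $\bc(G_0)$. Your asymmetric diagonal fails exactly there. Take $n_1=3$ (the first residual case, $k=r-1$ with $r\ge 3$), $\tilde g=-(g_1+2g_2+\cdots+2g_r)$, and your two hybrid atoms $A_1=\tilde g\, g_1 g_2^2\cdots g_r^2$ and $A_2=\tilde g^2 g_1^2 g_2\cdots g_r$. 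Then $A_1^2=A_2\cdot\prod_{i=2}^r g_i^3$, so $\Lo(A_1^2)\supset\{2,r\}$ and hence $\mD{G_0}\le r-2<r-1$. The asymmetry of the coefficients is precisely what creates this extra short relation; the distance $r-1$ you correctly found in $\Lo(\tilde g^3 g_1^3\cdots g_r^3)=\{2,r+1\}$ is a genuine distance over $G_0$, but it is not the \emph{minimum} distance, which is what $\Dast{G}$ records.

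The repair, which is the paper's argument, is to keep the diagonal symmetric: for each $d\in[2,r]$ take independent elements $e_1,\dots,e_d$ of a common order $n$ (your elements $g_i$ of order $n_1$ serve), set $e_0=e_1+\cdots+e_d$ and $G_0=\{e_0,e_1,\dots,e_d\}$. The atoms of $\bc(G_0)$ are the $e_i^n$ together with $W_j=e_0^j\prod_{i=1}^d e_i^{n-j}$ for $j\in[1,n]$, and for any $B=e_0^A\prod_i e_i^{B_i}\in\bc(G_0)$ a factorization using $\alpha_j$ copies of $W_j$ and $\delta_i$ copies of $e_i^n$ forces $\delta_i=(B_i+A)/n-\sum_j\alpha_j$, so its length equals a constant minus $(d-1)\sum_j\alpha_j$. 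Thus every set of lengths over $G_0$ lies in an arithmetic progression with difference $d-1$, and the relation $W_jW_{n-j}=W_n\prod_i e_i^n$ (lengths $2$ versus $d+1$) shows this difference is attained, whence $\mD{G_0}=d-1$. Letting $d$ run over $[2,r]$ covers all of $[1,r-1]$ in one stroke, with no case split; your reductions to subgroups of exponent $n_1$ and to part (3) when the parameters happen to line up are correct but become unnecessary.
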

\begin{proof}
We only give a sketch for details see \cite{geroldingerhalterkochBOOK}. 
For the first point,  let $d \in [2,r]$ and let  $e_1, \dots, e_d \in G $ be independent elements of the same order, which we denote by $n$; note that by the definition of the rank such elements exist. Further, let $e_0= \sum_{i=1}^n e_i$. It follows that $W_j = e_o^j \prod_{i=1}^d e_i^{n-j}$ for $j \in [1,n]$ and $e_i^n$ for $i \in [1,d]$ are the only minimal zero-sum sequences. 
One has $W_j W_k = W_{j+k} \prod_{i=1}^d e_i^{n}$  for $j+k \le n$, and 
$W_j W_k = W_{j+k-n} W_n$ for $j+k>n$ are the only non-trivial relations. 
The former relations yield a distance of $(d+1)-2 = d-1$.

For the second point, we consider the set $\{-g,g\}$ for an element of order $g$; cf. Lemma \ref{lem_AP}.

For the third point, we consider the example given in Example \ref{ex_n-r-1}.
\end{proof}

Recently, Geroldinger and Zhong \cite{geroldingerzhongDast} proved that in fact  the inequality above is an equality; partial results and relevant techniques appeared in various papers, including \cite{gaogeroldinger00,WASchar}. 

\begin{theorem}
Let $H$ be a Krull monoid with finite class group $G$.
\begin{enumerate}
\item If $|G|\le 2$, then $\Dast{H} = \emptyset$.
\item If $2 < |G|< \infty$, then $\max \Dast{H} \le \max \{\exp(G)-2, \ro(G)-1\}$. If every class contains a prime divisor then equality holds. 
\end{enumerate}
\end{theorem}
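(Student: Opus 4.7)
The plan for (1) is immediate from Theorem \ref{thm_carlitz}: when $|G|\le 2$, $H$ is half-factorial, so $\Lo(a)$ is a singleton for every $a\in H$, hence $\Delta(S)=\emptyset$ for every divisor-closed submonoid $S$ of $H$, which gives $\Dast{H}=\emptyset$. For the lower bound in (2) under the hypothesis that every class contains a prime divisor, I would simply invoke the lemma immediately preceding the theorem, which produces $\ro(G)-1$ (from a basis of $G$) and $\exp(G)-2$ (from $\{g,-g\}$ with $g$ of maximal order, cf.\ Lemma \ref{lem_AP}) as elements of $\Dast{G}$; the transfer results recalled in Section \ref{transfer} then carry these values into $\Dast{H}$ because divisor-closed submonoids, minimal distances and sets of lengths are preserved.

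The substantive part is the upper bound. Using the block homomorphism $H\to \bc(G_P)$, together with the observation that divisor-closed submonoids of $\bc(G_P)$ are exactly the submonoids $\bc(G_0)$ with $G_0\subset G_P$, the task reduces to the following combinatorial statement: for every $G_0\subset G$ with $\Delta(G_0)\neq\emptyset$ one has $\mD{G_0}\le \max\{\exp(G)-2,\ro(G)-1\}$. Equivalently, inside any such $\bc(G_0)$ one must exhibit a zero-sum sequence $B$ whose set of lengths contains two consecutive values whose gap is bounded by $\max\{\exp(G)-2,\ro(G)-1\}$.

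To produce such a $B$, my plan would be to start from an arbitrary non-trivial relation $U_1\cdots U_s=V_1\cdots V_t$ (which exists because $\bc(G_0)$ is not half-factorial) and then \emph{refine} it by inserting controlled auxiliary relations built from either an element of large order inside $\langle G_0\rangle$ (producing gaps at most $\exp(\langle G_0\rangle)-2$, as in Lemma \ref{lem_AP}) or from an independent set inside $\langle G_0\rangle$ (producing gaps at most $\ro(\langle G_0\rangle)-1$, as in the first item of the preceding lemma). The goal is to show that inserting such relations, together with an averaging/telescoping argument, produces a set of lengths whose successive differences are all bounded by $\max\{\exp(G)-2,\ro(G)-1\}$, so that the \emph{minimum} of $\Delta(G_0)$ necessarily falls below this bound. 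A natural inductive scheme is on $|\langle G_0\rangle|$: either $G_0$ already contains a useful long element or independent subset, in which case one is done directly, or one descends to a proper subgroup via quotienting by a suitable cyclic factor.

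The main obstacle is exactly this refinement step for arbitrary $G_0$: one does not have explicit knowledge of $\ac(G_0)$, and the delicate case is when $G_0$ contains neither an element of order close to $\exp(G)$ nor an independent family of size close to $\ro(G)$, yet still admits non-trivial relations. Here one must use finer structural information about the long minimal zero-sum sequences over $G_0$, drawing on the partial results and techniques referenced by the author (in particular the inverse-type results on minimal zero-sum sequences of near-maximal length). The fact that the target bound depends only on $\exp(G)$ and $\ro(G)$, both monotone under passage to subgroups, is what allows the induction to close cleanly once the base combinatorial construction is in place.
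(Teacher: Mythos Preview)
The paper is a survey and does not give a proof of this theorem; it attributes the result to Geroldinger and Zhong \cite{geroldingerzhongDast} and simply states it. So there is no proof in the paper to compare your proposal against. Your treatment of part (1) and of the lower bound in (2) is correct and matches what the surrounding text already provides (one minor caveat: Theorem \ref{thm_carlitz} is stated under the extra hypothesis that every class contains a prime divisor, but the direction you need holds regardless, since $\Do(G_P)\le\Do(G)\le 2$ forces $\Delta(H)=\emptyset$ directly via the lemma $\sup\Delta(H)\le\Do(G_P)-2$).

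For the upper bound your outline has a genuine gap, and you essentially concede this yourself. The refinement you propose---inserting auxiliary relations built from an element of large order, or from an independent family, inside $\langle G_0\rangle$---requires those auxiliary atoms to lie in $\bc(G_0)$, hence to be sequences over $G_0$ itself, not merely over $\langle G_0\rangle$. The constructions in Lemma \ref{lem_AP} and in the lemma immediately preceding the theorem use the very specific subsets $\{g,-g\}$ or $\{e_0,e_1,\dots,e_d\}$ with the $e_i$ independent, and an arbitrary $G_0$ need not contain any configuration of this kind. Your fallback of descending by quotienting out a cyclic factor does not control $\mD{G_0}$ in a usable way: there is no general inequality tying $\mD{G_0}$ to $\mD{\pi(G_0)}$ in the direction you would need, since quotienting can both destroy and create relations among atoms. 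The actual argument in \cite{geroldingerzhongDast} is substantial and does not proceed along the lines you sketch; bounding $\mD{G_0}$ uniformly over all $G_0\subset G$ is precisely the hard new content of that paper, and your outline does not supply the missing mechanism.
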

For the case of infinite class group it was proved by Chapman, Schmid, Smith \cite{WASdeltainf} that if each class contains a prime divisor then $\Dast{H}= \N$. 

For groups $G$ where the rank is large relative to the exponent the set $\Dast{G}$ is  completely determined by the preceding theorem.  

\begin{corollary}
Let $G$ be a finite abelian group. If $\ro(G)-1 \ge \exp(G) -2$, then $\Dast{G}= [1, \ro(G)-1]$.  
\end{corollary}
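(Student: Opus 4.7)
The plan is to combine the two immediately preceding results — the lemma giving $[1,\ro(G)-1] \subset \Dast{G}$ and the theorem giving $\max \Dast{G} \le \max\{\exp(G)-2, \ro(G)-1\}$ — and observe that the hypothesis makes the two bounds agree.

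First, I would note that $\Dast{G}$ is by convention $\Dast{\bc(G)}$, and that $\bc(G)$ is a Krull monoid whose class group is (canonically identified with) $G$ in which every class trivially contains a prime divisor (each $g \in G$ is the class of itself, viewed as a prime element of $\fc(G)$). Hence the equality case of the preceding theorem applies, and we get
\[
\max \Dast{G} \le \max\{\exp(G)-2,\ro(G)-1\}.
\]
Under the hypothesis $\ro(G)-1 \ge \exp(G)-2$, the right-hand side equals $\ro(G)-1$, so $\max \Dast{G} \le \ro(G)-1$. Since $\Dast{G} \subset \N$, this gives the inclusion $\Dast{G} \subset [1,\ro(G)-1]$.

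For the reverse inclusion, I would simply invoke part (1) of the lemma immediately preceding the theorem, which asserts $[1,\ro(G)-1] \subset \Dast{G}$ for any finite abelian group with $|G|\ge 3$ (the case $|G|\le 2$ is excluded by the hypothesis, since $\ro(G)-1 \ge \exp(G)-2 \ge 0$ combined with $|G|\le 2$ would force $G$ trivial or $C_2$, both of which satisfy $\ro(G)-1 = 0 = \exp(G)-2$ only for $|G|=2$, and the corollary's conclusion is vacuously trivial there since $[1,0]=\emptyset$; for $|G|\ge 3$ the lemma applies directly). Combining the two inclusions yields $\Dast{G} = [1,\ro(G)-1]$.

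There is essentially no obstacle here: the whole content of the corollary is the observation that, under the stated rank-versus-exponent inequality, the known upper bound for $\max \Dast{G}$ matches the length of the initial interval that one can always realize. So the proof is a one-line deduction from the theorem and the lemma that precede it.
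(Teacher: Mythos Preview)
Your proposal is correct and matches the paper's intended argument: the corollary is stated there without proof, as an immediate consequence of the lemma giving $[1,\ro(G)-1]\subset\Dast{G}$ together with the theorem giving $\max\Dast{G}=\max\{\exp(G)-2,\ro(G)-1\}$. Your handling of the degenerate cases $|G|\le 2$ is also correct (both sides are empty), though the paper simply leaves this implicit.
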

Moreover, directly from the above results, for $ \exp(G) -2 = \ro(G)$ the set $\Dast{G}$ must still be an interval, namely $[1, \exp(G)-2]$, yet for groups with $\ro(G) < \exp(G) -2$ the set  $\Dast{G}$ could have gaps. Indeed, it frequently does have gaps, as the result below shows (it is a direct consequence of \cite[Theorem 3.2]{WASchar} and \cite{geroldingerzhongDast}).

\begin{theorem}
Let $H$ be a Krull monoid with class group $G$ such that each class contains a prime divisor. Suppose that $\exp(G)-3 \ge \ro(G)$ and that $G$ does not have a subgroup isomorphic to $C_{\exp(G)}^2$. Then  $\Dast{H}$ is not an interval, as $\exp(G)-3 \notin \Dast{H}$ while $\{1 , \exp(G)-2 \} \subset \Dast{H}$.
\end{theorem}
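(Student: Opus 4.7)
The inclusion $\{1,\exp(G)-2\} \subset \Dast{H}$ is immediate from the constructional lemma preceding the theorem. Indeed, the hypothesis $\exp(G)-3 \ge \ro(G) \ge 1$ forces $\exp(G) \ge 4$, hence $|G|\ge 3$, and Lemma~\ref{lem_23} realises $\{2,3\}$ as a set of lengths over a small subset of $G$, giving $1 \in \Dast{H}$. Setting $d = \exp(G)$ in the construction ``$d-2 \in \Dast{G}$ for each $3 \le d \mid \exp(G)$'' --- concretely, $G_0 = \{g,-g\}$ for $g \in G$ of order $\exp(G)$, whose only minimal zero-sum sequences are $g(-g)$, $g^{\exp(G)}$, and $(-g)^{\exp(G)}$, so that every non-trivial distance in $\Delta(\bc(G_0))$ is a positive multiple of $\exp(G)-2$ --- yields $\exp(G)-2 \in \Dast{H}$.

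The substantive content is $\exp(G)-3 \notin \Dast{H}$. Writing $n = \exp(G)$ and $r = \ro(G)$, I would argue by contradiction: assume $G_0 \subset G$ satisfies $\mD{G_0} = n-3$. Two structural reductions cut down $G_0$ sharply. First, for any pair $\{g,-g\} \subset G_0$ with $g \ne -g$, the identity $g^m(-g)^m = ((-g)g)^m$ for $m = \ord(g)$ injects $m-2$ into $\Delta(\bc(G_0))$; since $m \mid n$ and $\mD{G_0}=n-3$, we need $m \ge n-1$, which combined with $m \mid n$ and $n \ge 4$ forces $m = n$. The hypothesis that $G$ has no $C_n^2$ subgroup then confines all such pairs to a single cyclic subgroup $\langle g \rangle \cong C_n$. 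Second, the construction ``$[1,\ro(G)-1]\subset \Dast{G}$'' and variants of Lemma~\ref{lem_23} inject distances in $[1, r-1] \subseteq [1, n-4]$ into $\Delta(\bc(G_0))$ whenever $G_0$ contains sufficiently many independent elements of a common order, again contradicting $\mD{G_0}=n-3$.

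After these reductions, the remaining elements of $G_0$ have orders properly dividing $n$ and come without their negatives in $G_0$. A finer case analysis of how such elements can co-occur in a minimal zero-sum sequence with powers of $\pm g$ should show that every resulting length gap in $\Delta(\bc(G_0))$ is either a positive multiple of $n-2$ (inherited from the $\{g,-g\}$-relations) or strictly smaller than $n-3$, so $\mD{G_0} \in \{n-2\}\cup[1,n-4]$ and never equals $n-3$, the desired contradiction. The main obstacle is precisely this mixed-relation analysis --- enumerating the zero-sum interactions between high- and low-order elements and showing that each one either respects the $n-2$ divisibility pattern or introduces a strictly smaller distance; this is the content of \cite[Theorem~3.2]{WASchar}. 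Combined with the equality $\max\Dast{H}=n-2$ from the preceding theorem of \cite{geroldingerzhongDast}, it yields the advertised gap and shows that $\Dast{H}$ is not an interval.
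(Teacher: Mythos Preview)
Your approach is essentially the paper's: the paper gives no proof here and simply records the result as a direct consequence of \cite[Theorem~3.2]{WASchar} and \cite{geroldingerzhongDast}, and you land on the same two citations after handling the easy inclusion $\{1,\exp(G)-2\}\subset\Dast{H}$ via the standard constructions.

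One remark on your heuristic sketch of the hard direction: the step ``the hypothesis that $G$ has no $C_n^2$ subgroup then confines all such pairs to a single cyclic subgroup $\langle g\rangle\cong C_n$'' does not follow from that hypothesis alone. For instance, $G=C_2\oplus C_6$ satisfies all the hypotheses of the theorem ($n=6$, $\ro(G)=2$, no $C_6^2$), yet $(0,1)$ and $(1,1)$ are order-$6$ elements generating distinct cyclic subgroups. What actually forces the confinement in \cite{WASchar} is the standing assumption $\mD{G_0}=n-3$ combined with the no-$C_n^2$ hypothesis: the presence of two such pairs from different cyclic subgroups produces additional short relations and hence a distance strictly below $n-3$. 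So your reduction is right in spirit, but the attribution of which hypothesis does the work is off; since you ultimately defer to \cite[Theorem~3.2]{WASchar} for the full argument, this does not damage the proposal.
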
  

The type of groups for which the problem of determining $\Dast{G}$ in more detail has 
received most attention are cyclic groups. In this case $\Dast{G}$ shows a rich structure that is 
not yet fully understood, despite various partial results. 

For $G$ a cyclic group of order $n$ we have, by the results above, that $\max \Dast{G}=n-2$, and it was proved by Geroldinger and Hamidoune \cite{geroldingerhamidoune02} that the second largest element of $\Dast{G}$ 
is $\lfloor n/2 \rfloor -1$ for $n\ge 4$.

Recently several further elements were determined by Plagne and the author \cite{WASdeltacyclic}; we state a simplified version of the result (the actual result goes down to a tenth, rather than a fifth, of the order of the group). 
  
\begin{theorem}
\label{thm_directweak}
Let $G$ be a cyclic group of order at least $n_0$ (where $n_0=250$ is a possible choice).
We have
\begin{equation*}
\begin{split}
\Dast{G} \cap \N_{\ge |G|/5}  =
\N \cap \biggl \{ |G|- & 2, \frac{|G|-2}{2}, \frac{|G|-3}{2}, \frac{|G|-4}{2},
\frac{|G|-4}{3},   \\
 &  \frac{|G|-6}{3},\frac{|G|-4}{4},\frac{|G|-5}{4},\frac{|G|-6}{4},
\frac{|G|-8}{4} \biggr \}.
\end{split}
\end{equation*}
\end{theorem}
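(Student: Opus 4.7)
The plan is to prove Theorem \ref{thm_directweak} by establishing two inclusions separately. For the \emph{direct} direction, one exhibits for each listed integer $d = (|G|-a)/b$ a subset $G_0 \subset G$ realizing $\mD{G_0} = d$; for the \emph{inverse} direction, one shows that no $d \ge |G|/5$ outside the listed set can appear as $\mD{G_0}$ for any $G_0 \subset G$.

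For the direct inclusion, the key building blocks are subsets supported on elements whose order divides $|G|$. Taking $G_0 = \{g,-g\}$ for $g$ of order $m$ yields, by the argument of Lemma \ref{lem_AP}, a minimum distance essentially equal to $m-2$. Thus choosing $m \in \{|G|, |G|/2, |G|/3, |G|/4\}$ (when such an $m$ divides $|G|$) produces the `principal' values $|G|-2$, $(|G|-4)/2$, $(|G|-6)/3$, $(|G|-8)/4$. The remaining listed values such as $(|G|-2)/2$, $(|G|-3)/2$, $(|G|-4)/3$, $(|G|-5)/4$, $(|G|-6)/4$ require enriching $G_0$ by one or two carefully chosen additional elements, typically of closely related order; one then checks, by enumerating the minimal zero-sum sequences of $G_0$ and computing $\Delta(G_0)$, that the gcd of the distance set drops to precisely the required value while remaining above $|G|/5$.

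For the inverse inclusion, suppose $d \in \Dast{G}$ with $d \ge |G|/5$ and fix $G_0 \subset G$ with $\mD{G_0} = d$. The strategy is structural: since $d$ is very large relative to $|G|$ and $\mD{G_0} \le \Do(G_0) - 2$, the subset $G_0$ generates a cyclic subgroup $H_0 = \langle G_0 \rangle$ of order $|G|/b$ for some small index $b \in \{1,2,3,4\}$ (the case $b=5$ already forces $d < |G|/5$, and larger indices are excluded a fortiori). One then shows that within $H_0$ the subset $G_0$ differs from $\{g, -g\}$, for $g$ a generator of $H_0$, only by a bounded perturbation, the admissible perturbations corresponding exactly to the small shifts $a \in \{2,3,4,5,6,8\}$ appearing in the numerators. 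Here the structural results on minimal zero-sum sequences of near-maximal length over cyclic groups (Savchev--Chen, Yuan), already invoked elsewhere in the excerpt, are essential: they force the long atoms in $\bc(G_0)$ to be, up to a prescribed form, essentially powers of a single fixed element, which pins down the arithmetic of $\Delta(G_0)$ and hence of $\mD{G_0}$.

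The main obstacle is the inverse direction and in particular the quantitative control that allows the threshold $|G|/5$ to work uniformly. One must systematically rule out exotic `hybrid' configurations in which $G_0$ meets two cosets of different index subgroups, as their interaction could in principle create a new distance in $[|G|/5, |G|]$ not of the listed form. This is where the constant $n_0 = 250$ enters: for $|G| \ge n_0$ any such hybrid can be shown either to push the minimum distance below the threshold or to contradict the zero-sum structure theorems, whereas for small $|G|$ sporadic exceptions do occur, so an explicit lower bound on $|G|$ is unavoidable.
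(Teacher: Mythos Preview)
Your outline has the correct two-inclusion structure, but it misses the central technical tool that the paper (following Plagne--Schmid) identifies, and the substitute you propose for the inverse direction is not adequate at the threshold $|G|/5$.

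For the \emph{direct} part, the paper does not build most of the non-principal values by ``enriching $\{g,-g\}$ with one or two additional elements.'' Instead, the bulk of them come from \emph{two-element} subsets $\{e, ae\}$ with $\gcd(a,|G|)=1$: one has the explicit formula $\mD{\{e,ae\}} = \gcd(a_1,a_3,\dots,a_{m-1})$ in terms of the odd-length continued fraction expansion $n/a = [a_0,a_1,\dots,a_m]$. Choosing $(c_1,c_2)$ so that $a = (|G|-c_1)/c_2$ and $d = (|G|-c_1-c_2)/(c_1c_2)$ is integral yields $\mD{\{e,ae\}}=d$; the pairs $(1,1),(1,2),(1,3),(1,4),(2,2)$ give $|G|-2,\ \tfrac{|G|-3}{2},\ \tfrac{|G|-4}{3},\ \tfrac{|G|-5}{4},\ \tfrac{|G|-4}{4}$. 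The subgroup construction $\{g,-g\}$ supplies $\tfrac{|G|-4}{2},\ \tfrac{|G|-6}{3},\ \tfrac{|G|-8}{4}$ as you say; then $\tfrac{|G|-6}{4}$ comes from applying the two-element formula inside the index-$2$ subgroup, and $\tfrac{|G|-2}{2}$ from the genuinely three-element set $\{e,-e,(|G|/2)e\}$. Your ``enrichment'' description does not capture this, and without the continued-fraction formula you have no mechanism to verify that these enriched sets have the asserted minimum distance.

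For the \emph{inverse} part, the paper states that the key reduction is that \emph{all} cases can be reduced to two-element subsets $\{e,ae\}$, and then Theorem~\ref{aux_thm_2elements} classifies exactly which such sets have $\mD{\{e,ae\}}>\sqrt{|G|}$: they are precisely those of the form $a=(|G|-c_1)/c_2$ with $d_a = (|G|-c_1-c_2)/(c_1c_2)$ integral. This is what pins down the finite list above $|G|/5$. Your proposal to use the Savchev--Chen/Yuan structure theorems instead does not work here: those results describe minimal zero-sum sequences of length exceeding roughly $|G|/2$, but from $\mD{G_0}\ge |G|/5$ you only deduce $\Do(G_0)\ge |G|/5+2$, which is far below the range where those structure theorems apply (even after passing to a subgroup of index $\le 4$). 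There is no route from ``$G_0$ supports an atom of length $\ge |G|/5$'' to ``$G_0$ is a small perturbation of $\{g,-g\}$'' via those results.

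In short: the missing idea is the continued-fraction description of $\mD{\{e,ae\}}$ and the reduction of general $G_0$ to the two-element case. Without it neither inclusion goes through as you have written it.
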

 
An important tool in obtaining this result is the determination of $\mD{G_0}$ for 
$G_0$ a set with $|G_0|= 2$. 
The key-case, to which all other cases can be reduced, is that 
$G_0 = \{e, ae\}$ where $e$ is a generating element and $\gcd (a, \ord e )= 1$. 

In this case, one can express $\mD{G_0}$ in terms of the continued fraction 
expansion of $(\ord e  ) /a$. More specifically, one has the following results \cite[Theorem 2.1]{changetal07}. 

\begin{theorem}
Let  $G = \langle e \rangle$ with $\ord e = n >3$. Further, let $a \in [2,n-1]$ and let $[a_0, a_1, \dots, a_m]$ be the continued fraction expansion of $n/a$ of odd length (that is $m$ is even). Then
\[\mD{ \{e,ae\}}= \gcd(a_1, a_3, \dots, a_{m-1}).\]  
\end{theorem}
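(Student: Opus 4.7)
The plan is to identify $\bc(\{e,ae\})$ with the two-dimensional affine semigroup
\[
S = \{(x,y) \in \N_0^2 \colon x + ay \equiv 0 \pmod n\}
\]
via the isomorphism $e^x(ae)^y \leftrightarrow (x,y)$. Under this identification the atoms of $\bc(\{e,ae\})$ correspond to the componentwise-minimal non-zero elements of $S$, and the length of a factorization of $(X,Y)\in S$ is the sum of the multiplicities in a non-negative integer decomposition $(X,Y) = \sum n_i A_i$ over the atoms $A_i$. Apart from the two pure atoms $(n,0)$ and $(0,n/\gcd(a,n))$, the mixed atoms $A_0,A_1,\ldots,A_s$ (with $x_i,y_i\ge 1$) can be listed in order of strictly increasing $y$ --- equivalently strictly decreasing $x$ --- and they obey a Stern--Brocot style recursion $A_{k+1} = c_k A_k - A_{k-1}$ with integers $c_k\ge 2$. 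Starting from $(n,0)$ and $(n-a,1)$ and choosing each $c_k$ maximal subject to $A_{k+1}\in\N_0^2$ amounts to running the Euclidean algorithm on $(n,a)$, so the whole sequence of mixed atoms is read off from the continued fraction $[a_0;a_1,\ldots,a_m]$ of $n/a$.

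For each interior triple $A_{k-1},A_k,A_{k+1}$ the recursion directly encodes the elementary trade $A_{k-1}\cdot A_{k+1} = A_k^{c_k}$, which contributes a length difference of $c_k-2$. Analogous boundary trades involve the two pure atoms and match the start and end of the Stern--Brocot chain. A standard Hilbert-basis argument for two-dimensional affine semigroups shows that these neighbour trades generate the entire kernel of the factorization homomorphism $\fc(\ac(S))\to S$; consequently every distance in $\Delta(\bc(\{e,ae\}))$ is a non-negative integer combination of the $c_k-2$, and $\mD{\{e,ae\}}$ equals $\gcd\{c_k-2 \colon k\}$.

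The remaining task is to identify the coefficients $c_k$ in terms of the partial quotients of $n/a$. By tracking how the Stern--Brocot recursion behaves at each Euclidean step I would show that the non-zero values of $c_k-2$ are precisely the odd-indexed partial quotients $a_1,a_3,\ldots,a_{m-1}$; every other $c_k$ equals $2$ and contributes nothing. The convention that the continued fraction has odd length (that is, $m$ even) is exactly what ensures that the terminal step to the pure atom $(0,n/\gcd(a,n))$ is trivial ($c_k=2$), so no stray $a_m-2$ sneaks into the gcd. Putting the pieces together yields $\mD{\{e,ae\}} = \gcd(a_1,a_3,\ldots,a_{m-1})$. The main obstacle is this last step: making the bijection between the Stern--Brocot recursion coefficients and the odd-indexed continued fraction entries fully explicit (and handling both boundaries uniformly) is where most of the technical work lies; the first two steps are essentially mechanical once the dictionary between Euclidean steps and atom enumeration is fixed.
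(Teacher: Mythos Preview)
The paper does not supply its own proof of this theorem; it is quoted as \cite[Theorem~2.1]{changetal07} without argument, so there is no in-paper proof to compare against.

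On its own merits your plan is sound and close to the route taken in the original source: realise $\bc(\{e,ae\})$ as the affine semigroup $S=\{(x,y)\in\N_0^2\colon x+ay\equiv 0\pmod n\}$, list its Hilbert basis via a three-term recursion $A_{k-1}+A_{k+1}=c_kA_k$, note that the neighbour trades $A_{k-1}A_{k+1}=A_k^{c_k}$ generate all relations (this is standard for two-dimensional cones), and conclude $\mD{\{e,ae\}}=\gcd_k(c_k-2)$. One point needs correcting. The rule ``$c_k$ maximal with $A_{k+1}\in\N_0^2$'' is ill-posed: with $A_0=(n,0)$ and $A_1=(n-a,1)$ the vector $cA_1-A_0=(c(n-a)-n,c)$ lies in $\N_0^2$ for every $c\ge 2$, so no maximum exists. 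What actually determines $c_k$ is the requirement that $A_{k+1}$ be the \emph{next} Hilbert-basis element; equivalently, writing $x_k$ for the first coordinate of $A_k$, one has $x_{k-1}=c_kx_k-x_{k+1}$ with $0\le x_{k+1}<x_k$, which is precisely the \emph{negative} (Hirzebruch--Jung) continued fraction algorithm applied to $x_0/x_1=n/(n-a)$, not the ordinary Euclidean algorithm on $(n,a)$. The bridge you flag as the main obstacle is then the classical conversion between the two expansions: if $n/a=[a_0;a_1,\dots,a_m]$ with $m$ even, the sequence of $c_k$'s reads
\[
\underbrace{2,\dots,2}_{a_0-1},\;a_1+2,\;\underbrace{2,\dots,2}_{a_2-1},\;a_3+2,\;\dots,\;a_{m-1}+2,\;\underbrace{2,\dots,2}_{a_m-1},
\]
so the values $c_k-2$ are zero except at the positions where they equal $a_1,a_3,\dots,a_{m-1}$. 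This identity is a short induction on the Euclidean steps, and once it is in place the gcd claim is immediate. With the selection rule for $c_k$ fixed and this conversion formula supplied, your sketch becomes a complete proof.
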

The continued fraction expansion mentioned in the result is the standard continued fraction expansion, except for the fact that one allows the last term to equal  $1$, which allows to always achieve that $m$ is even.

As a consequence of this, one obtains the following elements that correspond precisely to those $a$ for which the continued fraction expansion has length $3$.  

\begin{remark}
Let  $G = \langle e \rangle$ with $\ord e = n >3$. Further, let  $b,c\in [1,n-1]$ such that $(n-b)/c$ and $(n-b-c)/(bc)$ are positive integers. Then 
\[\min \Delta \left( \left\{e, \frac{n-b}{c}e\right\} \right)= \frac{n-b-c}{bc}.\]
\end{remark}

Moreover, it can be shown that  if  $\mD{\{e,ae\}}$ is  `large' then it must be of that form (cf. \cite[Corollary 3.2]{changetal07} and \cite{WASdeltacyclic}). 
  
\begin{theorem}
\label{aux_thm_2elements}
Let $G$ be a cyclic group, $e$ be a generating element of $G$ and $a\in [1,|G|]$ such that $\gcd(a,|G|)=1$.

Then $\min \Delta(\{e, ae\})> \sqrt{|G|}$ if and only if there exist
some positive integers $c_1$ and $c_2$ such that
\[
a = \frac{|G| - c_1}{c_2}
\]
and the quantity
\[
d_a = \frac{|G| - (c_1 + c_2)}{c_1 c_2}
\]
is integral and satisfies $d_a> \sqrt{|G|}$. Indeed, in this case $\min \Delta(\{e, ae\})=d_a$.
\end{theorem}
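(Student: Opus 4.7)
The plan is to deduce the statement from the preceding theorem, which identifies $\mD{\{e,ae\}}$ with $\gcd(a_1,a_3,\dots,a_{m-1})$, where $n/a=[a_0;a_1,\dots,a_m]$ is the continued fraction expansion of $n/a$ taken of odd length (so $m$ is even) and $n=|G|$. The problem thus reduces to understanding when the odd-indexed partial quotients can all exceed $\sqrt{n}$.

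For the sufficiency, suppose positive integers $c_1,c_2$ are given with $a=(n-c_1)/c_2$ and $d_a=(n-c_1-c_2)/(c_1c_2)$ both in $\N$. The identity $d_ac_1c_2=n-c_1-c_2$ rearranges to $a=d_ac_1+1$ and $n=c_2a+c_1$, which are the first two steps of the Euclidean algorithm applied to $(n,a)$; the algorithm then terminates via $c_1=c_1\cdot 1+0$. Hence $n/a=[c_2;d_a,c_1]$ is an odd-length continued fraction with $m=2$, and the preceding theorem yields $\mD{\{e,ae\}}=\gcd(d_a)=d_a$, which is $>\sqrt{n}$ by hypothesis.

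For the necessity, assume $\mD{\{e,ae\}}>\sqrt{n}$, so that every odd-indexed partial quotient $a_{2i+1}$ in the odd-length expansion of $n/a$ exceeds $\sqrt{n}$. Let $q_k$ denote the denominators of the convergents, so $q_{-1}=0$, $q_0=1$, $q_k=a_kq_{k-1}+q_{k-2}$, and $q_m=a$. If $m\ge 3$, then since $a_1,a_3>\sqrt{n}$ and $a_2\ge 1$,
\[
q_3 \;\ge\; a_3 q_2 \;\ge\; a_3(a_2 q_1+q_0) \;\ge\; a_3 a_1+a_3 \;>\; n.
\]
However $q_m=a<n$ and the $q_k$ are strictly increasing for $k\ge 1$, a contradiction. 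Hence $m\le 2$; since $m$ is even and we may assume $a\ne 1$ (otherwise $\Delta(\{e,ae\})=\emptyset$), it follows that $m=2$. Writing $n/a=[c_2;d_a,c_1]$ with positive integers $c_2,d_a,c_1$, one unravels to obtain $a=(n-c_1)/c_2$ and $d_a=(n-c_1-c_2)/(c_1c_2)=\mD{\{e,ae\}}>\sqrt{n}$, as required.

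The crux is the denominator estimate in the necessity direction, showing that odd-indexed partial quotients exceeding $\sqrt{n}$ force the continued fraction to have length at most three. The remaining steps are routine translations between continued-fraction data and the triples $(c_1,c_2,d_a)$, but some attention is needed to ensure that the continued fraction is really taken of odd length (which is always possible by absorbing or introducing a trailing $1$) and to exclude the degenerate case $a=1$.
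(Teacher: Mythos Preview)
The paper does not give its own proof of this theorem; it is stated with a reference to \cite[Corollary~3.2]{changetal07} and \cite{WASdeltacyclic}. Your approach --- reducing to the preceding continued-fraction identity $\mD{\{e,ae\}}=\gcd(a_1,a_3,\dots,a_{m-1})$ and then using the denominator recursion to show that $a_1,a_3>\sqrt{n}$ forces $q_3>n>a=q_m$, hence $m=2$ --- is correct and is precisely the kind of deduction the paper sets up by placing the continued-fraction theorem immediately before this one.

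Two small points worth tightening. First, in the sufficiency direction, to invoke the preceding theorem you need $[c_2;d_a,c_1]$ to be \emph{the} odd-length continued fraction of $n/a$, not just \emph{a} representation. This follows because $a=d_ac_1+1$ with $d_a\ge 1$ gives $0<c_1<a$, so $n=c_2a+c_1$ is a genuine first Euclidean step; then either $c_1\ge 2$ and $[c_2;d_a,c_1]$ is the standard expansion, or $c_1=1$ and it is exactly the odd-length variant of $[c_2;d_a+1]$. Your closing remark about ``absorbing or introducing a trailing $1$'' covers this, but the phrase ``the algorithm then terminates via $c_1=c_1\cdot 1+0$'' is only a literal Euclidean step when $c_1\ge 2$. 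Second, the preceding theorem is stated for $n>3$, so strictly speaking the cases $n\le 3$ need a word; they are easily seen to hold vacuously since $\mD{\{e,ae\}}\le n-2\le 1<\sqrt{n}$ there.
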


These results already explain the presence of several of the elements we mentioned in Theorem \ref{thm_directweak}. Specifically one gets the elements 
\[ \left \{|G| - 2,  \frac{|G|-3}{2}, \frac{|G|-4}{3},   \frac{|G|-5}{4} , \frac{|G|-4}{4} \right \} \]
for $(c_1,c_2)$ equaling $(1,1)$, $(1,2)$, $(1,3)$, $(1,4)$, and $(2,2)$ respectively. 

Furthermore, for every subgroup $G'$ of $G$, one gets that $\exp(G')-2$ is an element of $\Dast{G'}$ and thus of $\Dast{G}$. This yields the elements 
\[ \left \{ \frac{|G|-4}{2},   \frac{|G|-6}{3}, \frac{|G|-8}{4}  \right \},\] considering subgroups of order $|G|/2$, $|G|/3$, $|G|/4$, respectively. In addition,  $(|G|-6)/4$ is in $\Dast{G}$ as  $(\exp(G')-3)/2$ is in  $\Dast{G'}$ for $G'$ a subgroup of order $|G|/2$. 

It remains to construct $\{  (|G|-2)/2 \}$. This element can be shown to equal  $\mD{\{e,-e, (|G|/2)e\}}$. 
In this way we have given some  arguments for the presence of all these elements. 
Of course it remains to show that there are no other elements.
We do not discuss this here. 

For other types of groups the set $\Dast{G}$ is less well-understood. But, it is for example known for $n \ge 5$ that $\{n-3 , n-2\} \subset \Dast{C_n^2}$ and $\max( \Dast{C_n^2} \setminus \{n-3 , n-2\})= \lfloor n/2 \rfloor -1$ (see \cite[Corollary 3.7]{WASchar}). Further results of this form can be obtained for more general groups under assumptions; see \cite[Theorem 3.2]{WASchar} and \cite{geroldingerzhongCHAR}. 
We end with a specific problem and a general remark on further work. 

\begin{problem}
Is there a finite abelian group $G$ such that $\Dast{G}$ is an  interval and $\exp(G) \ge 2\ro(G)+2$?    
\end{problem}

For $n \le 2r+1$, it follows that $\Dast{C_n^r} = [1, \max\{n-2, r-1,\}]$ as 
$[1, r-1]$ and $[\max \{1, n-r-1\}, n-2]$ are contained in it. 

Having some information about the differences  $\Dast{H}$ at hand a next natural question would be to determine which periods can appear in the STSL. Beyond the information contained in the complete results on $\lc(H)$, for special cases which we recalled above, not too much is known on this problem. However, given the recent progress on the problem of determining $\Dast{G}$ and associated descriptions of sets yielding the relevant distances, it might now be a good time to approach this problem.

\subsection{The bound in the STSL }

Having discussed the set of differences we turn to the other parameter in the STSL, the bound. A lot less is known about it. 
Geroldinger and Grynkiewicz \cite[Theorem 4.4.2]{geroldingergrynkiewicz09} showed the following refinement and generalization of Theorem \ref{thm_STSL}. 

\begin{theorem}
\label{thm_STSLfD}
Let $H$ be a Krull monoid with subset of classes containing prime divisors $G_P$ such that $\Do(G_P)$ is finite (and at least $3$). Let 
\[
M = (2 \Do(G_P) - 5 )\Do(G_P)^2 + \frac{1}{2}\Do(G_P)^4)^{\frac{\Do(G_P)(\Do(G_P)-1)}{2}}.
\] 
For each $a \in H$ its set of lengths $\Lo(a)$ is an AAMP with bound $M$ and difference $d \in \Delta(H)$.  
\end{theorem}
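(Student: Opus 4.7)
The plan is to reduce the statement to the monoid of zero-sum sequences $\bc(G_P)$ via the block homomorphism recalled in Section \ref{transfer}. Since that map is a transfer homomorphism and therefore preserves sets of lengths, it suffices to prove the assertion for $B \in \bc(G_P)$. Writing $\Do = \Do(G_P)$ for brevity, any atom $U \in \ac(G_P)$ dividing a fixed $B$ satisfies $|U| \le \Do$ and $\supp(U) \subset \supp(B)$, so only finitely many atoms occur in the factorizations of $B$. Moreover, the lemma cited earlier gives $\sup \Delta(H) \le \Do - 2$, so the set of potential differences is already finite with a uniform bound depending only on $\Do$.

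Next I would fix $B$ and view $\Lo(B)$ as the image under the length map $(n_U)_U \mapsto \sum_U n_U$ of the fiber in $\N_0^{\ac(B)}$ cut out by the condition $\prod_U U^{n_U} = B$, where $\ac(B)$ denotes the finite set of atoms dividing $B$. Relations between atoms, that is formal pairs of atom-products representing the same zero-sum sequence, form a sub-lattice of $\Z^{\ac(B)}$, and any two factorizations of $B$ differ by such a relation. A relation $A_1 \cdots A_p = A_1' \cdots A_q'$ contributes a length change $|p-q|$ lying in $\Delta(H) \cup \{0\}$. The key combinatorial input, which is the heart of the Geroldinger--Grynkiewicz argument, is a bound on the support size and ``height'' of a generating set of such relations purely in terms of $\Do$: every relation can be decomposed into a bounded number of primitive ones, each involving atoms of length at most $\Do$ supported on at most $\Do$ distinct elements of $G_P$. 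The number of essentially different primitive relations is then a function of $\Do$ alone, which accounts for the exponent $\Do(\Do-1)/2$ appearing in $M$.

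With this control in place, the AAMP structure of $\Lo(B)$ emerges as follows. When $B$ is long, a pigeonhole argument forces some atom to occur many times in some factorization, creating room to apply a primitive relation repeatedly; iterating produces an arithmetic progression of lengths with some difference $d \in \Delta(H)$. Combining several primitive relations whose differences all divide a common value yields an arithmetic multi-progression with period $\dc \subset [0,d]$, which is the central part $L^{\ast}$ of the AAMP description. The ``beginning'' and ``end'' parts $L'$ and $L''$ consist of lengths realized only by factorizations that are too short for the periodic pattern to stabilize, and each primitive relation can contribute only a bounded number of such exceptional lengths. Summing the contributions across the bounded family of primitive relations gives the bound $M$.

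The main obstacle is precisely this last bookkeeping step: showing that the exceptional end-parts have total length at most $M$ purely in terms of $\Do$, with no dependence on $B$ or on $|\supp(B)|$. This requires a careful quantitative analysis of how long one must wait in a long factorization before every primitive relation can be freely applied, which propagates iteratively through the $\Do(\Do-1)/2$ primitive relations and produces the iterated polynomial bound in the statement. Once this quantitative control is in hand, combining it with the bounds on the length and support of primitive relations and with the difference bound $\Delta(H) \subset [1, \Do-2]$ yields that $\Lo(B)$ is an AAMP with bound $M$ and difference $d \in \Delta(H)$, completing the proof.
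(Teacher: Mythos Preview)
The paper does not actually prove this theorem. It is a survey article, and Theorem~\ref{thm_STSLfD} is simply quoted from Geroldinger and Grynkiewicz \cite[Theorem 4.4.2]{geroldingergrynkiewicz09}; the paper offers only a few lines of commentary afterwards and no argument whatsoever. So there is nothing in the paper to compare your proposal against.

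As a self-standing proof your proposal is not one: it is an outline that names the right ingredients (transfer to $\bc(G_P)$, the bound $\sup\Delta(H)\le \Do(G_P)-2$, relations among atoms, a pigeonhole/stabilization mechanism producing a periodic central part) but defers every substantive step to phrases like ``the key combinatorial input, which is the heart of the Geroldinger--Grynkiewicz argument'' and ``the main obstacle is precisely this last bookkeeping step.'' In particular, your assertion that ``the number of essentially different primitive relations is then a function of $\Do$ alone, which accounts for the exponent $\Do(\Do-1)/2$'' is not justified and is not quite accurate as stated: the set $\ac(B)$ and the relation lattice do depend on $\supp(B)$, which can be arbitrarily large even when $\Do(G_P)$ is fixed, so one must work harder to get uniformity in $B$. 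The actual proof in \cite{geroldingergrynkiewicz09} proceeds through pattern ideals and tamely generated monoids rather than a direct lattice-of-relations count, and the specific form of $M$ emerges from that machinery. If your goal is to supply a proof here, you would need to either reproduce that argument or give a genuinely independent one; what you have written is a plausible narrative but not a proof.
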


The condition that $\Do(G_P)\ge 3$ is no actual restriction as otherwise the monoid is half-factorial. 
As mention in Section \ref{sec_prel} finiteness of $G_P$ implies finiteness of $\Do(G_p)$. 
Thus, the result includes the case that only a finite number of classes contains prime divisors. 
We highlight that in this result the set of differences is $\Delta(H)$ not $\Dast{H}$. 
However, in case the class group is finite we can combine the results to get that  
every set of lengths is an AAMP with difference in $\Dast{H}$ and still have an explicit bound.

The bound above, being of the form $\exp (c  \log (\Do(G_P)) \Do(G_P)^2)$, grows quite fast in terms of the Davenport constant. 
It is not at all clear what the actual order of magnitude of the bound should be. 
Below we give a simple example showing that the dependence is at least of quadratic order.

\begin{example}
Let $n \ge 6$ be even, such that $n/2$ is odd. Let $C_{n/2} \oplus C_n =  \langle e_1 \rangle \oplus \langle e_2 \rangle $.  Then, for sufficiently large $k$, one has that the set of lengths of  $(e_1(-e_1))^{kn/2}(e_2(-e_2))^{kn}$ is an AAP with difference $1$ and bound (at least) $(n-3)(n/2 - 3)$ while  $\Do(C_{n/2} \oplus C_{n})= 3n/2  - 1$.
\end{example}

To see this let $d_1,d_2$ be co-prime positive integers. Then, for all sufficiently large $k_1,k_2$ one has that  $L= (a+ d_1 \cdot [0,k_1])+ (b+ d_2 \cdot [0,k_2])$ is an AAP with difference $1$ and bound (at least)  $(d_1-1)(d_2-1)$; recall that the Frobenius number of $d_1,d_2$ is $(d_1-1)(d_2-1)-1$. Thus $a+b \in L$ while $a+b + (d_1-1)(d_2-1)-1 \notin L$ so that when writing $L= y + ( L' \cup L^{\ast} \cup L'')$ in the usual way with $L^{\ast}$ an AP with difference $1$, that is an interval, then $ y \ge a+b + (d_1-1)(d_2-1)$ and $a+b \ge y-M$ implies that $M \ge (d_1-1)(d_2-1)$.     
Now, by Lemma \ref{lem_AP} the set of length of $(g(-g))^{k \ord g}$ is an AP with difference $\ord g -2$ of length $k$. 
And $\Lo ((e_1(-e_1))^{kn/2}(e_2(-e_2))^{kn}))= (2k + (n/2-2)\cdot [0,k])+  (2k + (n-2) \cdot [0,k])$.
If $n/2$ is odd, $n-2$ and $n/2 -2$ are co-prime. By the argument above we thus have an AAP with bound at least $(n-3)(n/2 - 3)$ in $\lc(C_{n/2} \oplus C_n)$, and $\Do(C_{n/2} \oplus C_{n})= 3n/2  - 1$. 

This example shows that the bound is at least of quadratic order in terms of the Davenport constant.

\begin{problem}
What is the (rough) order of magnitude of the bound in the STSL for $\lc(G)$ (in terms of $\Do(G)$)?
\end{problem}
Initially, it would also be interesting to have an answer to this problem just for some special (infinite) family of groups, 
or in other more restricted scenarios.

There is very little evidence on which one might base conjectures regarding the size of the bound $M$. 
However, an effect that might limit the size of the bound is that elements divisible by prime divisors from many different classes tend to have very simple sets of lengths. We recall a result in this direction due to Geroldinger and Halter-Koch \cite[Theorem 7.6.9]{geroldingerhalterkochBOOK}; their actual result is more precise. 

\begin{theorem}
Let $H$ be a Krull monoid with finite class group, and let $\varphi: H \to F$ be its divisor theory. 
If $a\in H$ such that $\varphi(a)$ is divisible by a prime divisor from each non-zero class, then $\Lo(a)$ is an interval.   
\end{theorem}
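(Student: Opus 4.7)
The plan is to first reduce via the block homomorphism recalled in Section \ref{transfer} to the monoid $\bc(G)$ of zero-sum sequences. Writing $B = \beta(a)$, the hypothesis that $\varphi(a)$ is divisible by a prime divisor from each non-zero class translates cleanly to $\supp(B) \supseteq G \setminus \{0\}$, and it suffices to show that $\Lo(B)$ is an interval.

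I would verify the interval property by showing $\Delta(\Lo(B)) \subseteq \{1\}$: for any $k \in \Lo(B)$ that is not the maximum, exhibit a factorization of $B$ of length $k+1$. Fix factorizations $z = U_1 \cdots U_k$ and $z' = V_1 \cdots V_l$ with $l > k$; the identity $\sum_i |U_i| = \sum_j |V_j| = |B|$ together with $|V_j| \ge 2$ forces some $|U_i| \geq 3$. After relabelling, write $U_1 = g_1 g_2 \cdots g_m$ with $m \geq 3$, and observe that the partial sums $s_j = g_1 + \cdots + g_j$ for $j \in [1,m-1]$ are all non-zero by minimality of $U_1$.

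The central construction is the splitting identity in $\fc(G)$
\[
U_1 \cdot s_j \cdot (-s_j) \;=\; \bigl(g_1 \cdots g_j \cdot (-s_j)\bigr) \cdot \bigl(g_{j+1} \cdots g_m \cdot s_j\bigr),
\]
where, as one checks directly from the minimality of $U_1$, both factors on the right are themselves minimal zero-sum sequences. By the hypothesis every non-zero element of $G$ appears in $B$; in particular, for each $j$, both $s_j$ and $-s_j$ lie in $\supp(B)$. The strategy is then to locate an occurrence of the pair $s_j(-s_j)$ inside the remaining part $U_2 \cdots U_k$ (possibly after modifying $z$), write $U_2 \cdots U_k = s_j(-s_j) \cdot W$ with $W$ a zero-sum sequence, and then rewrite $B$ as $A_j \cdot A_j' \cdot W$, where $A_j$ and $A_j'$ are the two atoms produced by the identity. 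Combining with a factorization of $W$ of appropriate length then produces a factorization of $B$ of length $k+1$.

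The hard part will be making this re-routing precise. One must ensure that the resulting length is \emph{exactly} $k+1$ rather than greater, and must handle the edge cases where the required $\pm s_j$ are trapped inside $U_1$ itself or where $s_j$ has order $2$ (so $s_j = -s_j$ and one needs $\vo_{s_j}(B) \geq 2$ for $s_j(-s_j) = s_j^2$ to be usable). These difficulties can be overcome by varying $j$ throughout $[1, m-1]$ and exploiting the full support of $B$, or more elegantly by choosing the factorization $z$ to minimize a suitable complexity measure (e.g.\ the total length of atoms, or the number of atoms of length $\ge 3$) and deriving a contradiction from that minimal choice together with the splitting identity above whenever no length-$(k+1)$ factorization exists.
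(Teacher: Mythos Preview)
The paper does not actually supply its own proof of this theorem; it merely cites it as \cite[Theorem~7.6.9]{geroldingerhalterkochBOOK} and remarks that the result there is more precise. So there is no argument in the paper to compare your approach against.

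As for the proposal itself: the reduction to $\bc(G)$, the translation of the hypothesis to $\supp(B)\supseteq G\setminus\{0\}$, and the splitting identity $U_1\cdot s_j(-s_j)=A_jA_j'$ with $A_j,A_j'\in\ac(G)$ are all correct and natural. The genuine gap is precisely the step you flag as ``the hard part,'' and the two remedies you propose do not close it as stated. Concretely: if the pair $s_j(-s_j)$ already occurs as one of the remaining atoms (some $U_p$ with $|U_p|=2$), then the replacement $U_1U_p\mapsto A_jA_j'$ yields a factorization of the \emph{same} length $k$, not $k{+}1$. If instead $s_j$ and $-s_j$ lie in two distinct atoms $U_p,U_q$ with $p,q\ge 2$, then after extracting them you are left with the zero-sum sequence $C=(U_ps_j^{-1})(U_q(-s_j)^{-1})$, and to hit length $k{+}1$ you need $2\in\Lo(C)$, which is not automatic. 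Varying $j$ over $[1,m{-}1]$ does not obviously avoid both obstructions at once; since the hypothesis only guarantees $\vo_h(B)\ge 1$ for each nonzero $h$, there is no reason a priori that a copy of $s_j$ or $-s_j$ is available outside $U_1$ for any particular $j$. Your extremal idea is more promising, but note that ``minimizing the total length of atoms'' is vacuous (that total is always $|B|$), and maximizing the number of length-$2$ atoms only contradicts the subcase where the remainder is a single atom; it does not, without further argument, handle the case $\min\Lo(C)\ge 3$ nor the situation where the only accessible copies of $\pm s_j$ sit in length-$2$ atoms. So the outline points in a reasonable direction, but substantial work remains before it constitutes a proof.
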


\section*{Acknowledgment}
The author is very grateful to the referee for many useful remarks and corrections.


\end{document}